\theoremstyle{plain}
\newtheorem{theorem}{Theorem}[section]
\newtheorem{corollary}[theorem]{Corollary}
\newtheorem{defn}[theorem]{Definition}
\newtheorem{prop}[theorem]{Proposition}
\newtheorem{lemma}[theorem]{Lemma}
\theoremstyle{definition}
\newtheorem{remark}[theorem]{Remark}
\newtheorem{notation}[theorem]{Notation}
\newtheorem{example}[theorem]{Example}
\newcommand{\Singular}{\textsc{Singular}}
\DeclareMathOperator{\reg}{reg}
\DeclareMathOperator{\Mon}{mon}
\DeclareMathOperator{\ord}{ord}
\DeclareMathOperator{\id}{id}
\DeclareMathOperator{\parm}{par}
\DeclareMathOperator{\m}{\mathfrak{m}}
\DeclareMathOperator{\jet}{jet}
\DeclareMathOperator{\corank}{corank}
\DeclareMathOperator{\supp}{supp}
\DeclareMathOperator{\Jac}{Jac}
\DeclareMathOperator{\dash}{-}
\DeclareMathOperator{\NF}{NF}
\DeclareMathOperator{\F}{F}
\DeclareMathOperator{\N}{\mathbb{N}}
\DeclareMathOperator{\Q}{\mathbb{Q}}
\DeclareMathOperator{\R}{\mathbb{R}}
\DeclareMathOperator{\C}{\mathbb{C}}
\DeclareMathOperator{\coeff}{coeff}
\DeclareMathOperator{\sat}{sat}
\subjclass[2010]{
Primary 14B05; Secondary 32S25, 14Q05.
}
\title[Classification of Singularities with Non-Degenerate Newton Boundary]%
{Classification of Complex Singularities with Non-Degenerate Newton Boundary}
\author{Janko B\"ohm}
\address{Janko B\"ohm\\
Department of Mathematics\\
University of Kaiserslautern\\
Erwin-Schr\"odinger-Str.\\
67663 Kaiserslautern\\
Germany}
\email{boehm@mathematik.uni-kl.de}
\author{Magdaleen S.\@ Marais}
\address{Magdaleen S.\@ Marais\\
University of Pretoria and African Institute for Mathematical Sciences\\
Department of Mathematics and Applied Mathematics\\
Private bag X20\\
Hatfield 0028\\
South Africa}
\email{magdaleen.marais@up.ac.za}
\author{Gerhard Pfister}
\address{Gerhard Pfister\\
Department of Mathematics\\
University of Kaiserslautern\\
Erwin-Schr\"odin\-ger-Str.\\
67663 Kaiserslautern\\
Germany}
\email{pfister@mathematik.uni-kl.de}
\thanks{This research was supported by grant KIC14081491583 of the National Research Foundation (NRF) of South Africa, the Staff Exchange Bursary Programme  of the University of Pretoria, a Research Development Grant awarded by the University of Pretoria, by the AIMS institute, and by the German Research Foundation (DFG) through Project II.5 of SFB-TRR 195.}
\keywords{%
Hypersurface singularities, algorithmic classification, normal forms, Newton non-degenerate germs%
}
\begin{document}

\begin{abstract}
In his groundbreaking work on classification of singularities with regard to right and stable equivalence of germs, Arnold has listed normal forms for all isolated hypersurface singularities over the complex numbers with either modality $\leq 2$ or Milnor number $\leq 16$. Moreover, he has described an algorithmic classifier, which determines the type of a given such singularity. 
In the present paper, we extend Arnold's work to a large class of singularities which is unbounded with regard to modality and Milnor number. We develop an algorithmic classifier, which determines a normal form for any corank $\leq 2$ singularity which is equivalent to a germ with non-degenerate Newton boundary in the sense of Kouchnirenko. 
In order to realize the classifier, we prove a normal form theorem: Suppose $K$ is a $\mu$-constant stratum of the jet space which contains a germ with a non-degenerate Newton boundary. 
We first observe that all germs in $K$ are equivalent to some germ with the same fixed non-degenerate Newton boundary. We then prove that all right-equivalence classes of germs in $K$ can be covered by a single normal form obtained from a regular basis of an  appropriately chosen special fiber.  All algorithms are
implemented in the library \texttt{arnold.lib} for the computer algebra system \textsc{Singular}. 

\end{abstract}

\maketitle

\section{Introduction}\label{Introduction}

In \citep{A1976, AVG1985} Arnold has classified all isolated hypersurface singularities for which 
the Milnor number $\le 16$ or for which the modality is $\le 2$. He has given normal forms in the sense of polynomial families with moduli parameters such that every stable equivalence class of function germs contains at least one, but only finitely many, elements of these families. We refer to such elements as normal form equations. In Arnold's work and in the rest of the paper each normal form contains representatives of all the stable equivalence classes of a specific $\mu$-constant stratum. For a specific germ $f$, we refer to a normal form covering the $\mu$-constant stratum containing $f$, as a normal form of $f$. Two germs are stably equivalent if they are right-equivalent after the direct addition of a non-degenerate quadratic form (see Section 5, \citet{A1976}). Two function germs $f,g\in\m^2\subset\C\{x_1,\ldots,x_n\}$, where $\m=\langle x_1,\ldots,x_n\rangle$, are right-equivalent, written $f\sim g$, if there is a $\C$-algebra automorphism $\phi$ of $\C\{x_1,\ldots,x_n\}$ such that $\phi(f)=g$. Using the Splitting Lemma (Lemma 9.2.10, \citet{PdJ2000}), any germ with an isolated singularity at the origin can be written, after choosing a suitable coordinate system, as the sum of two functions on disjoint sets of variables. One function that is called the non-degenerate part, is a non-degenerate quadratic form, and the other part, called the residual part, is in $\m^3$. The Splitting Lemma is implemented in the computer algebra system \Singular\ \citep{DGPS, GP2008} as part of the library \texttt{classify.lib} \citep{classify}. Given the classification of singularities of function germs, one can easily derive the classification of the corresponding hypersurface singularities (up to contact equivalence, which in addition to right equivalence allows for multiplication of the defining equation by a constant), see, for example, \citet{realclassify1}.

In \citep{AVG1985}, Arnold has made his classification explicit by describing an algorithmic classifier, which is based on a case-by-case analysis formulated in a series of $105$ theorems. This approach determines the type of the singularity in the sense of its normal form. The algorithm does not determine the values of the moduli parameters, that is, no normal form equation is given. 
Arnold's classifier is implemented in the \Singular\ library \texttt{classify.lib}. In \citep{BMP2016} a classifier has been developed for complex singularities of modality $\leq 2$ and corank $\leq 2$, which computes, for a given input polynomial, a normal form equation in its equivalence class. For an implementation see the \Singular\ library \texttt{classify2.lib} \cite{classify2}.

Classification of complex singularities has a multitude of practical and theoretical applications. In particular, the classification of real singularities as for example developed in \citep{realclassify1, realclassify2,realclassify3} is based on determining first the complex type of the singularity. Recent applications arise in high-energy physics in the study of singularities of Baikov polynomials \citep{LP2013}, which are a key object in the computation of integration-by-parts identities for Feynman integrals \citep{ibp}.

In the present paper, we develop a classification algorithm for all isolated hypersurface singularities of corank $\leq 2$ which are right-equivalent to a germ with a non-degenerate Newton boundary in the sense of Kouchnirenko's definition in  \cite{K1976}. This algorithm determines for a given input polynomial $f$ representing a $\mu$-constant stratum a polynomial normal form (that is, a family of polynomials with polynomial, in fact linear, dependence on the moduli parameters) which covers the whole $\mu$-constant stratum of $f$.  

In order to realize the classifier, we prove a normal form theorem: Suppose $K$ is a $\mu$-constant stratum of the jet space which contains a germ with a non-degenerate Newton boundary.  
We observe that all germs in $K$ are equivalent to some germ with the same fixed non-degenerate Newton boundary. We then prove that all right-equivalence classes of germs in $K$ can be covered by a single normal form obtained from  a so-called regular basis of the Milnor algebra of an  appropriately chosen special fiber
.\footnote{We remark, that we have also developed an algorithm for determining a normal form equation for $f$, that is, a representative of its right-equivalence class by specifying the values of the moduli parameters in our normal form family. We also have developed an algorithm to enumerate all normal form families up to a specified Milnor number. These algorithms will be presented in follow-up papers.}
All our algorithms are implemented in the \textsc{Singular} library \texttt{arnold.lib} \cite{arnoldlib}.

This paper is structured as follows: In Section \ref{Definitions and Preliminary Results}, we give the fundamental definitions and provide the prerequisites on singularities and their classification. 
In Section~\ref{section:NormalForm}, we prove that a normal form of the $\mu$-constant stratum of a given germ which is equivalent to a germ with non-degenerate Newton boundary can be obtained from the Newton boundary and a regular basis of the Milnor algebra (see Theorem~\ref{theorem:normalform}). 
In Section~\ref{sec char exp} we prove a self-contained fact which is required to complete the proof of the normal form theorem. We show  that, if two germs have the same characteristic exponents and intersection numbers, and one of them has a non-degenerate Newton boundary, then also the other one is right-equivalent to a germ with non-degenerate Newton boundary and same Newton polygon. Moreover, we prove that if a germ $f$ is equivalent to a germ with a non-degenerate Newton boundary, the Newton polygon of this germ is uniquely determined by the characteristic exponents and intersection numbers of the branches of $f$, provided we require the normalization condition.
In Section~\ref{section:generalAlg}, we develop an algorithm which transforms an input singularity to a germ with a non-degenerate Newton boundary, in case this is possible (see Algorithm~\ref{alg:clas}). 
In Section \ref{section:modality}, we use the result of Algorithm \ref{alg:clas} to compute the modality for the input polynomial, as well as a regular basis of the Milnor algebra (see Algorithms~\ref{alg:mod} and \ref{alg:regularbasis}). 
Finally, in Section~\ref{section:normalform}, relying on Theorem~\ref{theorem:normalform}, we use Algorithm~\ref{alg:clas} and Algorithm~\ref{alg:regularbasis} to compute for a given input polynomial $f$ a normal form which covers its $\mu$-constant stratum (see Algorithm~\ref{alg:normalform}).\medskip

\noindent\emph{Acknowledgements.} We would like to thank Gert-Martin Greuel 
and Hans Sch\"onemann for helpful discussions. We would also like to thank Fabian M\"aurer for contributing to an implementation under development based on the computer algebra system \textsc{OSCAR}.

\section{Definitions and Preliminary Results}\label{Definitions and Preliminary Results}
In this section we give some basic definitions and results, as well as some notation that will be used throughout the paper. We denote by $\mathbb{C}\{x_1,\ldots,x_n\}$ the ring of power series convergent in open neighborhoods of $(0,\ldots,0)$.
We start with a short account on weighted jets, filtrations, and Newton polygons. See also
\citep{A1974} and \citep{PdJ2000}. 

\begin{notation}
We write $\Mon(x_1,\ldots,x_n)$ for the multiplicative monoid of monomials in the variables $x_1,\ldots,x_n$. Given  $f\in\mathbb{C}\{x_1,\ldots,x_n\}$ and $m\in \Mon(x_1,\ldots,x_n)$, we write $\coeff(f,m)$ for the coefficient of $m$ in $f$. 
\end{notation}

\begin{defn}
Let $w=(c_1,\ldots,c_n)\in\N^n$ be a weight on the variables $(x_1,\ldots,x_n)$. The $w$-weighted degree on $\Mon(x_1,\ldots,x_n)$ is given by $w\dash\deg(\prod_{i=1}^nx_i^{s_i}):=\sum_{i=1}^n c_i s_i$. If the weight of all variables is equal to $1$, we refer to the weighted degree of a monomial $m$ as the standard degree of $m$ and write $\deg(m)$ for $w\dash\deg(m)$. We use the same notation for terms of polynomials.

We call a polynomial $f\in \C[x_1,\ldots, x_n]$ \textbf{quasihomogeneous} or \textbf{weighted homogeneous} of degree $d$ with respect to the weight $w$
if $w\dash\deg(t)=d$ for any term $t$ of $f$.
\end{defn}

\begin{defn}\label{def:piecewiseWeight}
Let $w=(w_1,\ldots,w_s)\in(\N^n)^s$ be a finite family of weights on the variables $(x_1,\ldots,x_n)$. For any monomial (or term) $m\in \C[x_1,\ldots,x_n]$,  we define the \textbf{piecewise weight} with respect to $w$ as
\begin{eqnarray*}
w\dash\deg(m)&:=&\min_{i=1,\ldots,s}w_i\dash\deg(m).\\
\end{eqnarray*}
A polynomial $f$ is called  \textbf{piecewise homogeneous} of degree $d$ with respect to $w$ if $w\dash\deg(t)=d$ for any term $t$ of $f$. 
\end{defn}
\pagebreak[3]

\begin{defn}
Let $w$ be a (piecewise) weight on $\Mon(x_1,\ldots,x_n)$.
\begin{enumerate}[leftmargin=10mm]
\item
Let $f = \sum_{i = 0}^{\infty} f_{i}$ be the decomposition of
$f \in \C\{x_1,\ldots,x_n\}$ into weighted homogeneous summands $f_{i}$ of
$w$-degree $i$. If $f_i=0$ for all $i\geq d$ and $f_d\neq 0$, then we write $\deg_{w}(f)=d$. The \textbf{(piecewise) weighted $j$-jet} of $f$ with respect to $w$ is
\[
w \dash \jet(f, j) := \sum_{i = 0}^j f_{i} \,.
\]
The sum of terms of  $f$ of lowest $w$-degree is the \textbf{principal part} of $f$ with respect to $w$.

\item  We say that a power series in $\C\{x_1,\ldots,x_n\}$ has \textbf{filtration} $d \in \N$ with respect to $w$ if all its monomials are of $w$-weighted degree $d$ or higher. We write $E_d^w$  for the sub-vector space of power series  in $\C\{x_1,\ldots,x_n\}$ of filtration $d$ with respect to $w$. The sub-vector spaces $E_d^w$ for $d \in \N$ define a filtration on $ \C\{x_1,\ldots,x_n\}$. 
\end{enumerate}
\end{defn}
If the weight
of each variable is $1$, we write $E_d$ instead of  $E_d^w$ and  $\jet(f,j)$ instead of $w\dash\jet(f,j)$. 
\begin{defn}
A power series $f\in\m^2\subset \C\{x_1,\ldots,x_n\}$ is \textbf{ $k$-determined} if
\[ f\sim \jet(f,k)+g\qquad\text{for all } g\in E_{k+1},\]
where $\sim$ denotes right-equivalence. We define the \textbf{determinacy} $\operatorname{dt}(f)$ of $f$ as the minimum number $k$ such that $f$ is $k$-determined.

\end{defn}

    \begin{defn}
Let $f\in\mathbb C\{x_1,\ldots,x_n\}$. The {\bf Jacobian ideal} $\Jac(f)\subset \C\{x_1,\ldots,x_n\}$ of $f$ is generated by the partial derivatives of $f$. The {\bf local algebra} $Q_f$ of $f$ is the residue class ring of the Jacobian ideal of $f$. The \textbf{Milnor number} of $f$ is the $\mathbb C$-vector space dimension of $Q_f$.
 \end{defn}
 
 It is a well-known fact, that all function germs are finitely determined:

\begin{theorem}\cite[Theorem 9.1.4]{PdJ2000}\label{thm fin det}
Let $f\in\m^2\subset \C\{x_1,\ldots,x_n\}$. If $\m^{k+1}\subset \m^2 \cdot \Jac(f)$ then $f$ is \textbf{ $k$-determined}. In particular, isolated singularities are finitely determined, and if $k\geq \mu(f)+1$ then $f$ is $k$-determined.
\end{theorem}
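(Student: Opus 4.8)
The plan is to establish the determinacy criterion by the homotopy (``Moser'') method, and then to deduce the assertions about isolated singularities by a nilpotency estimate on the local algebra $Q_f$.

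For the criterion, first observe that in the standard grading $E_{k+1}=\m^{k+1}$, so it suffices to show that $f\sim f+g$ for every $g\in\m^{k+1}$: replacing $g$ by $g-(f-\jet(f,k))$ then gives $f\sim\jet(f,k)+g$ for all $g\in E_{k+1}$, which is $k$-determinacy. Fix such a $g$ and form the deformation $F:=f+tg\in\C\{x_1,\ldots,x_n,t\}$, writing $\Jac_x(F)$ for the ideal generated by the $x$-partial derivatives of $F$. The key algebraic step is to upgrade the hypothesis to the relative inclusion
\[
\m^{k+1}\,\C\{x,t\}\ \subseteq\ \m^2\cdot\Jac_x(F)\cdot\C\{x,t\},\qquad \m=\langle x_1,\ldots,x_n\rangle\subset\C\{x,t\}.
\]
Since $\partial F/\partial x_i=\partial f/\partial x_i+t\,\partial g/\partial x_i$ with $\partial g/\partial x_i\in\m^k$, one gets $\m^{k+1}\C\{x,t\}\subseteq\m^2\Jac_x(F)\C\{x,t\}+t\cdot\m^{k+1}\C\{x,t\}$, and as $\m^{k+1}\C\{x,t\}$ is finitely generated over the local ring $\C\{x,t\}$ with $t$ in the maximal ideal, Nakayama's lemma yields the displayed inclusion. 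In particular $\partial F/\partial t=g$ lies in $\m^2\Jac_x(F)\C\{x,t\}$, so $\partial_t F+\sum_{i=1}^n\xi_i\,\partial_{x_i}F=0$ for suitable $\xi_i\in\m^2\C\{x,t\}$.

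Next I would integrate the time-dependent vector field $\partial_t+\sum_i\xi_i\partial_{x_i}$: solving the analytic ODE $\dot\phi_i(x,t)=\xi_i(\phi(x,t),t)$ with $\phi_i(x,0)=x_i$ produces, by analytic dependence of solutions of analytic ODEs on initial data, a map $\phi(\cdot,t)$ holomorphic near the origin for $t$ in a small disc; since $\xi_i\in\m^2$ we have $\phi(0,t)=0$ and $D_x\phi(0,t)=\id$, so $\phi(\cdot,t)$ is a biholomorphism germ. By construction $\frac{d}{dt}F(\phi(x,t),t)=0$, hence $(f+t_0g)\circ\phi(\cdot,t_0)=f$, i.e.\ $f\sim f+t_0g$ for all small $t_0$; specializing the relative inclusion at $t=t_0$ shows $f+t_0g$ again satisfies the hypothesis, so finitely many such steps reach $t_0=1$ and give $f\sim f+g$. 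I expect this last point --- the passage from infinitesimal triviality to an honest convergent coordinate change valid up to $t=1$ --- to be the main obstacle; the rest is the Nakayama bookkeeping above.

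Finally, for the statements on isolated singularities, recall that $f$ has an isolated singularity precisely when $\mu(f)=\dim_\C Q_f<\infty$, i.e.\ $Q_f$ is Artinian local with nilpotent maximal ideal $\overline{\m}$, say $\overline{\m}^{\,s}=0\neq\overline{\m}^{\,s-1}$. Then $\mu(f)=\sum_{i=0}^{s-1}\dim_\C(\overline{\m}^{\,i}/\overline{\m}^{\,i+1})\geq s$, since Nakayama forces each of these $s$ quotients to be nonzero. Hence $\m^{\mu(f)}\subseteq\Jac(f)$, so $\m^{k+1}\subseteq\m^{\mu(f)+2}\subseteq\m^2\Jac(f)$ whenever $k\geq\mu(f)+1$, and the first part applies.
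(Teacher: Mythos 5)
Your proposal is correct and follows the standard Thom--Levine/homotopy argument that is also the basis of the proof in de Jong--Pfister (the reference cited in the paper, which does not reprove the result). Your Nakayama bookkeeping for the relative inclusion, the integration of the time-dependent vector field, and the passage from infinitesimal triviality to finite $t$ (noting that $\m^{k+1}\subset\m^2\Jac(f+t_0g)$ persists for every $t_0$, so a compactness/finite-steps argument closes the loop) are all sound, as is the nilpotency estimate $\m^{\mu(f)}\subset\Jac(f)$ used for the second assertion.
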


\begin{remark}\label{rem Artin}Theorem \ref{thm fin det} implies that every right-equivalence class of an isolated singularity has a polynomial representative. Note that since a result analogous to Theorem \ref{thm fin det} is true also for formal power series, every right-equivalence class of formal power series with an isolated singularity has a polynomial (hence convergent) representative.  

There is  a bijection between the right-equivalence classes in the convergent and the formal sense: By the Artin approximation theorem (\cite{artin}), two convergent power series which are formally equivalent are also equivalent via a convergent transformation.

\end{remark}

\begin{defn}
Let $w\in \mathbb{N}^n$ be a single weight.  A power series $f\in \C\{x_1,\ldots,x_n\}$ is called \textbf{semi-quasihomogeneous} with respect to $w$ if its principal part with respect to $w$ is \textbf{non-degenerate}, that is, has finite Milnor number.\footnote{We say that $f$ is \textbf{(semi-)quasihomogeneous} if there exists a weight $w$ such that $f$ is (semi-)quasihomogeneous with respect to $w$.} The principal part of $f$ is then called the \textbf{quasihomogeneous part} of $f$.

\end{defn}

There are similar concepts of jets and filtrations for coordinate transformations:

\begin{defn}\label{phi}
Let $\phi$ be a $\C$-algebra automorphism of $\C\{x_1,\ldots,x_n\}$ and let
$w$ be a single weight on $\Mon(x_1,\ldots,x_n)$.

\begin{enumerate}[leftmargin=10mm]
\item
For $j > 0$ we define \emph{$w\dash\jet(\phi,j):=\phi_j^w$} as the automorphism given by
\[
\phi_j^w(x_i) := w\dash\jet(\phi(x_i),w\dash\deg(x_i)+j) \quad
\text{for all }i = 1,\ldots,n \,.
\]
If the weight of each variable is equal to $1$, that is, $w = (1, \ldots, 1)$, we
write $\phi_j$ for $\phi_j^w$.

\item\label{enum:filtration}
$\phi$ has filtration $d$ if, for all $\lambda \in \N$,
\[
(\phi-\id)E_\lambda^w \subset E_{\lambda+d}^w \,.
\]
\end{enumerate}
\end{defn}

\begin{remark}
Note that $\phi_0(x_i) = \jet(\phi(x_i), 1)$ for all $i = 1, \ldots, n$.
Furthermore note that $\phi_0^w$ has filtration $\le 0$, and
that, for $j > 0$, $\phi_j^w$ has filtration $j$ if $\phi_{j-1}^w = \id$.
\end{remark}

 \begin{defn}\label{def NB}
 Let $f=\sum_{i_1,\ldots,i_n}a_{i_1,\ldots,i_n}x_1^{i_1}\cdots x_n^{i_n}\in\C\{x_1,\ldots,x_n\}$. We call
 \begin{eqnarray*}
 \Mon(f)&:=&\{x_1^{i_1}\cdots x_n^{i_n}\ |\ a_{i_1,\ldots,i_n}\neq 0\}\\
 \end{eqnarray*}
 and
 \begin{eqnarray*}
 \sup(f)&:=&\{{i_1}\cdots {i_n}\ |\ a_{i_1,\ldots,i_n}\neq 0\}\\
 \end{eqnarray*}
 the \textbf{monomials} of $f$ and the \textbf{support} of $f$, respectively. Let
\begin{eqnarray*}
\Gamma_+(f)&:=&\displaystyle{\bigcup_{x_1^{i_1}\cdots x_n^{i_n}\in\supp(f)}}((i_1,\ldots,i_n)+\R^n_+)\\
\end{eqnarray*}
and let $\Gamma(f)$ be the boundary in $\R^n_+$ of the convex hull of $\Gamma_+(f)$.  We call $\Gamma(f)$ the {\bf Newton boundary} of $f$.
Then:
\begin{enumerate}[leftmargin=10mm]
\item Compact segments of $\Gamma(f)$ are called {\bf facets} of $\Gamma(f)$\footnote{This term is used since they are codimension $1$ faces of $\Gamma_+(f)$}. If $\Delta$ is a facet, then the set of monomials of $f$ lying on $\Delta$ is denoted by $\supp(f,\Delta)$ and the sum of the terms lying on $\Delta$ by $\jet(f,\Delta)$. Moreover, we write $\supp(\Delta)$ for the set of monomials corresponding to the lattice points of $\Delta$.  We use the same notation for a set of facets, considering the monomials lying on the union of the facets.
\item Any facet $\Delta$ induces a weight $w(\Delta)$ on $\Mon(x_1,\ldots,x_n)$ in the following way: If $\Delta$ has the normal vector $-(w_{x_1},\ldots, w_{x_n})$, in lowest terms, and $w_{x_1},\ldots,w_{x_n}>0$, we set \[w(\Delta)\dash\deg(x_1)= w_{x_1},\ldots, w(\Delta)\dash\deg(x_n)=w_{x_n}.\] 
 \item If $w_1,\ldots,w_s$ are the weights associated to the facets of $\Gamma (f)$ ordered by increasing slope, there are unique minimal integers $\lambda_1,\ldots,\lambda_s\geq 1$ such that the piecewise weight with respect to $w(f):=(\lambda_1 w_1,\ldots,\lambda_s w_s)$ 
 is constant on $\Gamma(f)$.  We denote this constant by~$d(f)$.
\item Let $\Delta_1$ and $\Delta_2$ be facets with weights $w_1$ and $w_2$, respectively, and let $w$ be the piecewise weight defined by $w_1$ and $w_2$. Let $d$ be the $w$-degree of the monomials on $\Delta_1$ and $\Delta_2$. Then $\operatorname{span}(\Delta_1,\Delta_2)$ is the Newton polygon associated to the sum of all monomials of $(w_1,w_2)$-degree $d$.
\item Suppose $\Gamma(f)$ has at least one facet. A monomial $m$ lies strictly underneath, on or above $\Gamma(f)$, if the $w(f)$-degree of $m$ is less than, equal to or greater than $d(f)$, respectively. 
 \end{enumerate}

   \end{defn}

 \begin{defn} \label{defn:regularBasis}
Suppose $f$ has finite Milnor number. A homogeneous basis $\{e_1,\ldots,e_{\mu}\}$ of the local algebra of $f$  is {\bf regular} with respect to the filtration given the piecewise weight $w$, if, for each $D\in\mathbb N$, the elements of the basis of degree $D$ with respect to $w$ are independent modulo the sum of $\Jac(f)$ and the space $E^w_{>D}$ of functions of filtrations bigger than $D$.

For convenience we use the following notation: A \textbf{regular basis for $f$} is a basis of $Q_f$ which is regular with respect to the filtration given by $w(f)$.
 \end{defn}
 \begin{remark}Suppose $f$ has finite Milnor number.
\begin{enumerate}[leftmargin=10mm]
\item  For any $f\in \C\{x_1,\ldots,x_n\}$ there exists a finite  basis of $Q_f$ with monomial representatives (take the monomials not appearing in the lead ideal of a Gr\"obner basis of $\Jac(f)$). We refer to this basis as a \textbf{monomial basis} of $Q_f$.
 
\item  A  monomial basis of $Q_f$ is regular for $f$ if and only if the images of the degree $D$ elements in the basis  form a basis for $E^w_D/(E^w_D\cap\Jac(f)+E^w_{>D})$  for each~$D$, where $w=w(f)$.

\end{enumerate}
 \end{remark}
 
 \begin{remark}\label{remark:regularBasis}
In the case that $f$ is semi-quasihomogeneous, any basis of $Q_f$ is a regular basis for $f$.  
 \end{remark}
 
 More generally, the following result is proven as Proposition 9.4 in \citet{A1974}.
 \begin{prop}
 For each $f\in\mathbb C\{x_1,\ldots,x_n\}$ with finite Milnor number, there exists a finite regular basis for $f$, in fact, one consisting entirely out of monomials.
 \end{prop}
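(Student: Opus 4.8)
The plan is to construct a regular monomial basis degree by degree from the filtration of $R:=\C\{x_1,\ldots,x_n\}$ by the subspaces $E^w_D$, where $w:=w(f)$ is the piecewise weight attached to the Newton boundary of $f$ in Definition~\ref{def NB}. First I would record the finiteness facts that are needed. Because every weight $w_i$ entering $w$ has strictly positive integer entries and every multiplier $\lambda_i$ is a positive integer, the standard degree and the $w$-degree of a monomial $m$ satisfy $\deg(m)\le w\dash\deg(m)\le C\deg(m)$ for a constant $C$ depending only on $f$. Hence $w\dash\deg$ is everywhere finite, and for each $D\in\N$ the set $M_D$ of monomials $m$ with $w\dash\deg(m)=D$ is finite, so that $E^w_D/E^w_{>D}$ is finite-dimensional and spanned by the classes of the monomials in $M_D$. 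Moreover $\mu(f)<\infty$ forces $\Jac(f)$ to be $\m$-primary, so $\m^N\subseteq\Jac(f)$ for some $N$, and then $E^w_D\subseteq\m^N\subseteq\Jac(f)$ as soon as $D\ge CN$.

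Passing to $Q_f=R/\Jac(f)$, write $\bar E^w_D$ for the image of $E^w_D$. The preceding paragraph shows that $(\bar E^w_D)_{D\ge 0}$ is a decreasing filtration of the finite-dimensional space $Q_f$ with $\bar E^w_0=Q_f$ and $\bar E^w_D=0$ for $D\gg0$, whence
\[
\dim_\C Q_f=\sum_{D\ge 0}\dim_\C\bigl(\bar E^w_D/\bar E^w_{>D}\bigr),
\]
a finite sum. For each $D$ there is a canonical isomorphism $\bar E^w_D/\bar E^w_{>D}\cong E^w_D\big/\bigl(E^w_D\cap(\Jac(f)+E^w_{>D})\bigr)$, and the classes of the monomials in $M_D$ span the target. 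I would then pick, for each $D$, a subset $B_D\subseteq M_D$ whose classes form a basis of this space and set $B:=\bigcup_D B_D$. By the displayed dimension count $B$ is finite with $|B|=\mu(f)$, it consists of monomials --- each of which is piecewise homogeneous --- and, by construction, for every $D$ the degree-$D$ elements of $B$ are linearly independent modulo $\Jac(f)+E^w_{>D}$; this is precisely the regularity condition of Definition~\ref{defn:regularBasis}.

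The step I expect to carry the real content is showing that $B$ is actually a $\C$-basis of $Q_f$, since so far we have only arranged independence within each associated graded piece, not across the whole filtration. As $|B|=\dim_\C Q_f$, it suffices to show that the monomials in $B$ are $\C$-linearly independent modulo $\Jac(f)$, and for this I would use a standard leading-term argument for the $w$-filtration: given a nontrivial relation $\sum_{m\in B}c_m m\in\Jac(f)$, let $D$ be the smallest $w$-degree occurring among the $m$ with $c_m\ne0$; every contributing monomial of larger $w$-degree lies in $E^w_{>D}$, so
\[
\sum_{m\in B_D}c_m m=\Bigl(\sum_{m\in B}c_m m\Bigr)-\!\!\sum_{\substack{m\in B\\ w\text{-}\deg(m)>D}}\!\!c_m m\in\Jac(f)+E^w_{>D},
\]
with at least one coefficient $c_m$, $m\in B_D$, nonzero; this contradicts the choice of $B_D$. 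Hence $B$ is linearly independent, so it is a regular monomial basis, which proves the proposition. The remaining ingredients --- the comparability estimate for $w\dash\deg$, the $\m$-primarity of $\Jac(f)$, and the displayed isomorphism of associated graded pieces (which follows from the modular law for subspaces together with the description of $E^w_D/E^w_{>D}$) --- are routine and I would not expect them to cause difficulty.
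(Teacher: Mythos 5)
Your proof is correct and is essentially the expanded version of the construction the paper sketches in the remark following the proposition (the paper itself does not prove the statement, citing Arnold instead): both select monomial bases of the graded pieces $E^w_D\big/\bigl((E^w_D\cap\Jac(f))+E^w_{>D}\bigr)$ degree by degree and take the union. You supply the details the remark omits --- the comparability of $w\dash\deg$ with the standard degree, the exhaustion of the filtration from $\m$-primarity of $\Jac(f)$, the dimension count, and the leading-term argument showing the union is genuinely a basis of $Q_f$ rather than merely independent in each graded piece --- all of which are accurate.
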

 
 The following remark describes how to obtain a regular basis:
 
 \begin{remark}
 Under $ E^w_D/(E^w_D\cap\Jac(f)) \rightarrow E^w_D/((E^w_D\cap\Jac(f))+E^w_{>D})$, every element of a monomial basis of the target has a unique preimage with a monomial representative of degree $D$. Let $R_D$ be a distinct set of monomial representatives of degree~$D$ of these preimages. Then $\bigcup_D R_D$ is a regular basis for $f$. We may restrict the union to the $D$ with $E^w_D\not\subset\Jac(f)$.
 \end{remark}
  \begin{defn}
A germ $f\in\mathbb C\{x_1,\ldots,x_n\}$ is called \textbf{convenient} if the Newton polygon meets all coordinate axes.
 \end{defn}

    \begin{defn}
We say that a convenient germ $f\in\mathbb C\{x_1,\ldots, x_n\}$ has \textbf{non-degenerate Newton boundary} if for every facet $\Delta$ of $\Gamma(f)$ the saturation\footnote{The saturation of a polynomial $g\in\mathbb C[x,y]$ is the polynomial $\sat(g):=h\in\mathbb C[x,y]$ with $g=h\cdot x^ny^m$ with $n,m$ maximal. Note that $\sat(g)$ is a generator of the ideal $\langle g\rangle:\langle x,y\rangle^\infty:=\{h\in\mathbb C[x,y]\mid h\cdot\langle x,y\rangle^n\subset\langle g\rangle \text{ for some } n\ge 1\}$ in $\mathbb C[x,y]$.} of $\jet(f,\Delta)$ has finite Milnor number. 
 \end{defn}

  \begin{theorem}\label{theorem:milnorNumber} (\cite{K1976})
Let $f\in \mathbb C\{x,y\}$ be a germ and suppose that its Newton polygon meets the $x$- and $y$-axis at $(a,0)$ and $(0,b)$, respectively. Denoting by $S$ the area of the polygon formed by the $x$- and $y$-axis of the Newton boundary, we have $\mu(f)\geq 2\cdot S-a-b+1$. If $f$ has a non-degenerate Newton boundary, then $\mu(f)=2\cdot S-a-b+1$.
We define the \textbf{Newton number} of $f$ as $N(f)=2\cdot S-a-b+1$.
\end{theorem}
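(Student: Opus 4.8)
\emph{Proof strategy.}
We may assume that $f$ is convenient, as in the hypothesis, that $f\in\m^2$ (the smooth case being trivial), and that $\mu(f)<\infty$, the inequality being vacuous otherwise. The plan is to isolate the essential assertion --- that $\mu(g)=N(g)$ for every \emph{convenient germ $g$ with non-degenerate Newton boundary} --- and to deduce the general inequality from it by upper semicontinuity of the Milnor number. For the latter, let $h$ be a generic $\C$-linear combination of the monomials lying on $\Gamma(f)$, so that $f_t:=f+t\,h$ has non-degenerate Newton boundary for all but finitely many $t$ (non-degeneracy of the facet jets is a non-empty Zariski-open condition on their coefficients) while $\Gamma(f_t)=\Gamma(f)$, hence $N(f_t)=N(f)$, for every $t$. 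Since the origin is a critical point of every $f_t$, conservation of the Milnor number in this family gives $\mu(f)=\mu(f_0)\ge\mu(f_t)$ for small $t\neq 0$; together with $\mu(f_t)=N(f_t)=N(f)$ this yields $\mu(f)\ge N(f)$, and when $f$ is itself non-degenerate the claimed equality is the essential assertion applied to $g=f$.

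So let $g$ be convenient with non-degenerate Newton boundary. First, $g$ has an isolated singularity: convenience forces the pure powers $x^a$ and $y^b$ into $\supp(g)$, so $V(g)$ meets the coordinate axes only at the origin, while non-degeneracy together with the Euler relation applied to each weighted-homogeneous facet jet excludes critical points of $g$ on $(\C^{\ast})^2$ near the origin; hence $\mu(g)<\infty$. Next I reduce to the Newton principal part: writing $g=g_0+g_1$ with $g_0:=\jet(g,\Gamma(g))$ the sum of the terms on $\Gamma(g)$ and $g_1\in E^{w(g)}_{>d(g)}$, the Newton polygon --- hence the number $N$ --- is constant along $g_s:=g_0+s\,g_1$, and so is the Milnor number, this being the Newton-filtered analogue of the stability of semi-quasihomogeneous germs and obtained by applying the finite determinacy theorem (Theorem~\ref{thm fin det}) relative to the filtration $E^{w(g)}_{\bullet}$. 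Thus $\mu(g)=\mu(g_0)$, $N(g)=N(g_0)$, and we are reduced to a convenient, non-degenerate, piecewise quasihomogeneous germ $g_0$ whose support lies on $\Gamma(g)$.

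For the remaining identity $\mu(g_0)=N(g_0)$ I would invoke the relation $\mu=2\delta-r+1$ between the Milnor number, the $\delta$-invariant and the number $r$ of branches of a plane curve singularity, and compute $r$ and $\delta$ combinatorially. The number of branches is $r=\sum_{\Delta}\ell(\Delta)$, summed over the facets $\Delta$ of $\Gamma(g_0)$ with $\ell(\Delta)$ the lattice length of $\Delta$: non-degeneracy makes each saturated facet jet $\sat(\jet(g_0,\Delta))$ square-free, so it contributes exactly $\ell(\Delta)$ pairwise distinct branches, and convenience excludes branches along the axes; collecting these and accounting for shared vertices gives $r=\bigl(\#\,\text{lattice points on }\Gamma(g_0)\bigr)-1$. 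The delicate term is $\delta$: using the Newton--Puiseux parametrizations of the branches --- which by non-degeneracy are governed entirely by the facet jets --- together with the additivity $\delta=\sum_i\delta(B_i)+\sum_{i<j}(B_i\cdot B_j)$, one checks that each summand is a lattice-point count and that the total equals $\delta=\#\{(i,j)\in\mathbb{Z}^2\mid i,j\ge 1,\ (i,j)\text{ lies on or below }\Gamma(g_0)\}$. Pick's theorem then assembles everything: with $I$ the number of lattice points interior to the region bounded by the axes and $\Gamma(g_0)$, one obtains $2\delta-r+1=2I+\bigl(\#\,\text{lattice points on }\Gamma(g_0)\bigr)-2=2S-a-b+1=N(g_0)$.

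The main obstacle is the computation of $\delta$ for the principal part: this is exactly the step that fails for a germ with degenerate Newton boundary, and it is where one must know that the characteristic exponents and pairwise intersection numbers of the branches are read off the Newton polygon without ambiguity --- which is precisely what non-degeneracy provides, and essentially the content of Section~\ref{sec char exp}. The remaining points --- the precise relative-finite-determinacy argument for a \emph{piecewise} Newton weight used in the reduction to $g_0$, and the conservation-of-$\mu$ input to the semicontinuity step --- are of a standard nature. An alternative to the Milnor--Jung route is Kouchnirenko's own: pass to the associated graded of the Milnor algebra for the Newton filtration, use non-degeneracy to see that the initial forms of $\partial_x g_0$ and $\partial_y g_0$ form a regular sequence in the graded sense, and evaluate $\dim_{\C}Q_{g_0}$ as an alternating sum of lattice-point counts of the Newton regions of $g_0$, $\partial_x g_0$ and $\partial_y g_0$ that telescopes to $N(g)$.
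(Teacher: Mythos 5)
The paper does not prove this statement---it is cited to Kouchnirenko \citep{K1976}---so there is nothing in the paper to compare against; what follows is an assessment of your sketch on its own terms. You follow a standard route (Milnor--Jung plus lattice-point combinatorics, or alternatively Kouchnirenko's graded-Milnor-algebra computation, which you mention at the end), and the combinatorial bookkeeping is correct: with $L$ the number of lattice points on $\Gamma(g_0)$ and $I$ the number of interior lattice points of the region bounded by the axes and $\Gamma(g_0)$, your counts $r=L-1$ and $\delta=I+(L-2)$ do combine via Pick's theorem to give $2\delta-r+1=2I+L-2=2S-a-b+1$, and the deformation/semicontinuity step correctly reduces the inequality to the non-degenerate equality.

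Two steps are genuine gaps, however. The reduction $\mu(g)=\mu(g_0)$ via ``conservation of the Milnor number'' along $g_s=g_0+sg_1$ is exactly the kind of assertion under proof, and Theorem~\ref{thm fin det} concerns the standard $\m$-adic filtration, not the piecewise Newton filtration $E^{w(g)}_\bullet$, so it does not deliver the filtered determinacy you invoke without further justification (e.g.\ verifying Arnold's Condition~A for $g_0$). In fact this reduction is unnecessary: under non-degeneracy the branches, their (single) Puiseux pairs, and their pairwise intersection numbers are read off $\Gamma(g)$ for $g$ itself, so Milnor--Jung can be applied to $g$ directly without passing to $g_0$. More importantly, the identity $\delta=\#\{(i,j)\in\mathbb{Z}^2 : i,j\ge 1,\ (i,j)\text{ on or below }\Gamma(g_0)\}$ is asserted rather than established---you flag it yourself as ``the delicate term''---and since the substance of the theorem lives in precisely that computation (equivalently, in the regular-sequence argument in the associated graded algebra for Kouchnirenko's own route), what you have is a sound plan rather than a completed proof.
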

 
 \begin{defn}\label{def muconstant}
 The $\mu$\textbf{-constant stratum} of a germ $f\in\mathbb C\{x_1,\ldots,x_n\}$ with Milnor number $\mu = \mu(f)$ is the connected component of the $(\mu+1)$-jet space (that is, the $\C$-vector space of polynomials in $\C [x_1,\ldots,x_n]$ of degree $\leq \mu +1$) with fixed Milnor number $\mu$ which  contains $\jet(f,\mu +1)$.

  \end{defn}

Our work will be based on Arnold's notion of a normal form (see \cite{AVG1985}):

 \begin{defn}\label{def nfequ}
Let $K\subset \C\{x_1,\ldots, x_n\}$ be a union of equivalence classes with respect to right-equivalence. A {\bf normal form} for $K$ is given by a smooth\footnote{
That is, infnitely often differentiable.}  map
\[\Phi:\mathcal{B}\longrightarrow \C[x_1,\ldots,x_n]\subset\C\{x_1,\ldots,x_n\}\]
of a finite-dimensional $\C$-linear space $\mathcal{B}$ into the space of polynomials for which the following three conditions hold:
\begin{itemize}[leftmargin=10mm]
\item[(1)] $\Phi(\mathcal{B})$ intersects all equivalence classes of $K$,
\item[(2)] the inverse image in $\mathcal{B}$ of each equivalence class is finite,
\item[(3)] $\Phi^{-1}(\Phi(\mathcal{B})\setminus K)$ is contained in a proper hypersurface in $\mathcal{B}$.
\end{itemize}
The elements of the image of $\Phi$ are called {\bf normal form equations}.

\noindent We say that a normal form is a {\bf polynomial normal form} if the map $\Phi$ is  polynomial.
\end{defn}

\begin{example}
Referring to Arnold's list of normal forms, $f=x^4+y^4$ is of complex type $X_9$. The $\mu$-constant stratum of $f$ is covered by the complex normal form $\Phi: \C\to\C[x,y]$, $\Phi(a)=x^4+ax^2y^2+y^4$. For example, the germ $g=x^4+\epsilon x^3y+y^4$, for given $\epsilon$, is in the $\mu$-constant stratum of $f$. Hence, there exists a $\C$-algebra automorphism $\phi_1$ such that $\phi_1(g)= x^4+ax^2y^2+y^4$, for some $a\in\C$. In this case there also exists a $\C$-algebra automorphism $\phi_2$ such that $\phi_2(g)= x^4-ax^2y^2+y^4$.

Those $a\in \C$ for which $x^4+ax^2y^2+y^4$ is not in the $\mu$-constant stratum of $f$ are contained in a proper hypersurface. In the case of $X_9$, this hypersurface is given by the equation $a^2-4=0$. Precisely for these values of $a$, the Milnor number of $x^4+ax^2y^2+y^4$ is infinite, that is, the singularity is not isolated.
\end{example}

\begin{defn} (\cite{AVG1985})
The \textbf{modality} of a germ $f\in\m^2\subset \mathbb C\{x,y\}$ is the least number such that a sufficiently small neighborhood of $\jet(f,k)$, with $k$ an upper bound on the determinacy of $f$, can be covered by a finite number of $m$-parameter families of orbits under the right-equivalence action on the $k$-jet space. 
\end{defn}

A general question is how the number of parameters in a normal form relates to the modality. In the cases where Arnold has given a normal form, the two numbers are known to agree (this follows e.g. from the results in \cite{AVG1985}). In the subsequent section, for each corank $2$ singularity with non-degenerate Newton boundary, we give a normal form covering its $\mu$-constant stratum. We will see, in particular, that for corank $2$ singularities with non-degenerate Newton boundary,
 the number of parameters in the normal form equals the modality.

\section{Normal forms for Isolated Singularities with a non-degenerate Newton Boundary}
\label{section:NormalForm}

Arnold has constructed normal forms for the $\mu$-constant strata of (in particular) all singularities of corank $2$ and modality $\leq 2$. He has also associated a type to each $\mu$-constant stratum of these singularities (then also referred to as the type of a germ in the stratum). Restricting to the non-degenerate cases of corank $2$ and modality $\leq 2$, the type of a $\mu$-constant stratum can be labeled by the constant Newton polygon of the respective normal form covering the stratum (noting that all of Arnold's normal forms have different Newton polygons).

Taking up this pattern in the more general setting of any corank 2 germ with non-degenerate Newton boundary, our goal in this section is to show that the $\mu$-constant stratum of each such germ can be covered by a single normal form, and to explicitly specify such a normal form. Without loss of generality we can restrict to convenient germs. Our argument is based on the concept of an unfolding, and makes use of  \cite{GLS2007} and \citet{BGM2011}. 

\begin{remark}
In a fixed normal form, all normal form equations for different values of the parameters corresponing to points in the $\mu$-constant stratum under consideration have the same Newton polygon. Except for the Newton polygon, our normal form only depends on a choice of a regular basis on and above the Newton polygon for a specifically chosen special fiber. 
If two $\mu$-constant strata have normal forms with the same Newton polygon, they contain the same special fiber, hence must be equal. While a $\mu$-constant stratum can have normal forms with different Newton polygons, we will see that all possible Newton polygons are related via right-equivalence, and we can impose a normalization condition to fix one of them. So the type of a $\mu$-constant stratum  
can be labeled by the (up to normalization unique) Newton polygon of the normal form.
\end{remark}

\begin{defn}
A convergent power series $F\in\mathbb C\{{ x,t}\}=\mathbb C\{x_1,\ldots,x_n,t_1,\ldots,t_k\}$ is called an {\bf unfolding} of $f\in\mathbb C\{x_1,\ldots,x_n\}$ if $F({ x}, { 0})=f({ x})$. After choosing a representative  $U\times T\to\mathbb C$ for $F$ (again denoted by $F$) with open neighbourhoods of the origin $U\subset \mathbb C^n$ and $T\subset \mathbb C^k$ we use the notation \[F_{ t}({ x})=F({ x,t}),
\]
for the corresponding family $F_{ t}$ of holomorphic functions (or convergent power series) parametrized by $t\in T$. The neighborhood $T$ is called the base of $F$.
 
\end{defn}

\begin{defn}
Two unfoldings $F_1:U\times T\to\mathbb C$ and $F_2:U\times T\to\mathbb C$ 
are equivalent if one is taken into the other under the action of right-equivalence, that is, under the action of the Lie group $G$ of $\mathbb C$-algebra automorphisms from $\mathbb C\{x_1,\ldots,x_n\}$ to itself, that smoothly depends on $t\in T$. In other words, there exist a smooth map-germ $g:(T,0)\to(G,e)$, where $e$ is the identity element of $G$, such that \[F_1(t)=g(t)F_2(t).\]
\end{defn}

\begin{defn}
Let $F_1:U\times T\to \mathbb C$ and $F_2:U\times T'\to\mathbb C$ be unfoldings of $f\in\mathbb C\{x_1,\ldots,x_n\}$. Then $F_2$ is induced by $F_1$, if there exists a smooth map $\theta: (T',0)\to (T,0)$ such that the pullback of $F_1$ is $F_2$, that is $F_2=(\id_U,\theta)^*(F_1)$.
\end{defn}

\begin{defn}
An unfolding $F$ of $f\in\mathbb C\{x_1,\ldots,x_n\}$ is called \textbf{versal\,}\footnote{Let $f\in\m^2\subset \mathbb C\{x,y\}$ and $e_1,\ldots,e_{\mu(f)+1}$ be a basis of $Q_f$ and $\lambda=(\lambda_1,\ldots,\lambda_{\mu(f)+1})$ then $\F_\lambda(x,y)=f+\sum\lambda_ie_i$ is a versal unfolding.} if every unfolding of $f$ is equivalent to one induced by $F$. A versal unfolding for which the base has the smallest possible dimension is called \textbf{miniversal}. An unfolding preserving the Milnor number is called an \textbf{equisingular} unfolding.
\end{defn}

\begin{defn}\citep[\S 3, Definition 2]{G1974}\label{def:properModality}
Let $f\in\m^2\subset\mathbb C\{x,y\}$ be a germ of modality $m$ and let $\F_t(x,y)$ be a miniversal unfolding. The \textbf{proper modality} of $f$ is the dimension at the origin of the set of values $t$  for which $\F_t(x,y)$ has a singular point with Milnor number $\mu(f)$.
\end{defn}

In fact, we have:

\begin{theorem}\citep[\S 3, Theorem 6]{G1974} The modality and the proper modality of a germ $f\in\m^2\subset\mathbb C\{x,y\}$ coincide.
\end{theorem}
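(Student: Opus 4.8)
The plan is to establish the equality of modality and proper modality by a dimension argument on the base of a miniversal unfolding, using the fact that the right-equivalence orbits foliate the base and that the orbit through a point of the $\mu$-constant stratum lies entirely within that stratum. First I would fix a miniversal unfolding $\F_t(x,y) = f + \sum_{i=1}^{\mu+1}\lambda_i e_i$ with base $\mathcal{B}$, as in the footnote to the definition of versality, and recall that versality means every nearby unfolding is induced from $\F$ up to right-equivalence; in particular $\F$ already contains representatives of all right-equivalence classes near $f$. The modality $m$ of $f$ is, by definition, the least number such that a neighborhood of $\jet(f,k)$ in the $k$-jet space can be covered by finitely many $m$-parameter families of right-equivalence orbits, and this can be transported to the base $\mathcal{B}$ of the miniversal unfolding since $\F$ realizes a transversal to the orbit of $f$.

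Next I would analyze the stratification of $\mathcal{B}$ by the orbits $G\cdot\F_t$ of the right-equivalence group action. The key observation is that the orbit of $\F_t$ under $G$ is contained in the locus $\{t' \mid \mu(\F_{t'}) = \mu(\F_t)\}$, because right-equivalence preserves the Milnor number, and conversely that on the $\mu$-constant locus $\Sigma = \{t \mid \F_t \text{ has a singular point with Milnor number } \mu(f)\}$ the orbits have a fixed codimension. More precisely, the tangent space to the $G$-orbit through $\F_t$ inside $\mathcal{B}$ is given by the image of $\Jac(\F_t)$ in $Q_{\F_t}$-coordinates, and along $\Sigma$ the Milnor number is constant equal to $\mu(f)$, so the orbit dimension is constant along $\Sigma$, equal to $(\mu+1) - m'$ where $m' = \dim_0\Sigma$ is the proper modality. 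Covering a neighborhood of $f$ in $\mathcal{B}$ by finitely many families of orbits of dimension at least $(\mu+1)-m'$ then shows $m \le m'$: one uses that $\Sigma$ itself, stratified by orbits, needs exactly $m'$ parameters, while the complement of $\Sigma$ (higher Milnor number) is lower-dimensional and the ambient orbits there are of larger dimension, so they contribute families with fewer parameters.

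For the reverse inequality $m' \le m$, I would argue that any covering of a neighborhood of $\jet(f,k)$ by finitely many $m$-parameter families of orbits restricts to a covering of the $\mu$-constant stratum (which is the union of those orbits with Milnor number $\mu(f)$) by finitely many families of the same orbits, parametrized by at most $m$ parameters; since $\Sigma$ is precisely this stratum viewed in $\mathcal{B}$, its dimension at the origin is at most $m$, giving $m' \le m$. Combining the two inequalities yields $m = m'$.

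The main obstacle I expect is making rigorous the passage between "families of orbits in the $k$-jet space" and "dimension of the $\mu$-constant locus in the base of the miniversal unfolding" — i.e. verifying that the miniversal unfolding provides a faithful finite-dimensional model of the orbit structure near $f$ (a slice transversal to the orbit of $f$), so that counting parameters of orbit families in the jet space is the same as measuring dimensions of orbit-strata in $\mathcal{B}$. This is where one must invoke that $\F$ is a versal deformation together with the finite determinacy of $f$ (Theorem~\ref{thm fin det}), so that $k$-jets for $k$ large compared to the determinacy already capture the full right-equivalence class, and that the natural map from the $k$-jet space to $\mathcal{B}$ is a submersion onto a neighborhood of the image of $f$ restricting to orbit maps. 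Once this correspondence is set up, the dimension count is routine; I would cite \citet{G1974} and \cite{GLS2007} for the deformation-theoretic background rather than reproving it.
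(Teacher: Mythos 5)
The paper does not prove this statement; it quotes it verbatim from Gabrielov \citep[\S 3, Theorem 6]{G1974}. So there is no in-paper argument to compare against, and I can only assess your sketch on its own terms.

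Your overall strategy --- passing to the base $\mathcal{B}$ of a miniversal unfolding, stratifying it by the $G$-orbits, and reading off both invariants from this stratification --- is the right framework and is indeed the one Gabrielov's proof lives in. However, the sketch contains one outright error and leaves the genuinely hard step unaddressed. The error is the assertion that ``the complement of $\Sigma$ (higher Milnor number) is lower-dimensional.'' By upper semi-continuity of the Milnor number (which follows from the argument of Lemma~\ref{lemma:regularBasis}), points of $\mathcal{B}$ off the $\mu$-constant locus $\Sigma$ have \emph{smaller} Milnor number, not higher, and the complement of $\Sigma$ is in fact the bulk of $\mathcal{B}$: the generic fiber $\F_t$ is Morse with $\mu=0$. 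So nothing is gained by dismissing the complement as negligible.

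That leaves the real content of the theorem, which your sketch asserts but does not prove: that the strata of lower Milnor number, although of full dimension in $\mathcal{B}$, can be covered by families of orbits using no more parameters than $\dim_0\Sigma$. This is where the work is --- one needs to compare, over each adjacent stratum, the codimension of a $G$-orbit in the jet space (which decreases with $\mu$) against the dimension of the stratum, and carry out this bookkeeping uniformly across the whole adjacency diagram of singularity types meeting any neighborhood of $f$. This is not a corollary of the orbit-codimension formula; it is essentially the statement that modality is upper semi-continuous along specializations of singularity types, which Gabrielov establishes through a separate analysis of the bifurcation set. Your inequality $m'\le m$ is closer to correct in spirit, but it too needs the precise identification $\dim(\text{$\mu$-const.~stratum in jet space}) = m' + \dim(\text{orbit of }f)$, which requires proving that the orbit dimension is constant along $\Sigma$, something you assert rather than derive. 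In short: the frame is right, the key inequality $m\le m'$ is the theorem and is not supplied, and the dimension claim about the complement of $\Sigma$ is false.
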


\begin{defn}\label{def:innermodality}\citep{A1974} For a germ $f\in\m^2\subset\mathbb C\{x,y\}$ with a non-degenerate Newton boundary, the \textbf{inner modality} is the number of all monomials in a regular basis for $Q_f$ lying on or above $\Gamma(f)$.
\end{defn}

We now connect Definitions \ref{def:properModality} and \ref{def:innermodality}.
In \cite{GLS2007}, Corollary  2.71 gives a local description of the $\mu$-constant stratum of a germ with a non-degenerate Newton boundary with respect to contact equivalence. The following result, which gives an equivalent version in terms of right equivalence, is an easy consequence thereof.
\begin{theorem}\label{corollary:unfolding}
Let $f\in\mathbb C\{x,y\}$ be a germ with a non-degenerate Newton boundary at the origin, then a miniversal, equisingular unfolding is given by 
\[F(x,y,{ t})=f+\sum_{i=1}^{m}t_ig_i,\]
where 
$m$ is the modality of $f$, and
 $g_1,\ldots,g_m$ represent a regular basis for $Q_f$ on and above $\Gamma(f)$.
\end{theorem}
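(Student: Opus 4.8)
The plan is to deduce Theorem~\ref{corollary:unfolding} directly from the contact-equivalence version in \cite[Corollary~2.71]{GLS2007}, bridging the gap between contact equivalence and right equivalence by the standard fact that for isolated hypersurface singularities the $\mu$-constant and the $\mu$-constant-plus-$\tau$-constant strata coincide in the relevant range, together with the classical unfolding-theoretic dictionary. First I would recall the setup: by Theorem~\ref{thm fin det}, $f$ is finitely determined, so we may work with a polynomial representative and with finite-dimensional jet spaces. Let $e_1,\ldots,e_{\mu(f)+1}$ be a monomial basis of $Q_f$ so that $\F_\lambda=f+\sum\lambda_ie_i$ is a miniversal unfolding (as noted in the footnote to the definition of versality). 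The ``geometric'' content of \cite[Corollary~2.71]{GLS2007} is that, for a germ with non-degenerate Newton boundary, the subfamily of this miniversal unfolding obtained by setting to zero all parameters attached to basis monomials lying \emph{strictly below} $\Gamma(f)$ is already $\mu$-constant (equisingular), and conversely every equisingular deformation is induced from it; moreover the basis elements lying on or above $\Gamma(f)$ that one keeps can be chosen to be a regular basis for $Q_f$ on and above $\Gamma(f)$, and their number is exactly the inner modality $m$, which by Gabrielov's theorem (\cite[\S 3, Theorem~6]{G1974}) and Definitions~\ref{def:properModality}, \ref{def:innermodality} equals the modality of $f$.

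The key steps, in order, are as follows. (i) Translate the statement of \cite[Corollary~2.71]{GLS2007}, which is phrased for the contact-equisingular stratum inside a semiuniversal \emph{contact} deformation, into a statement about right equivalence: this uses that for an isolated hypersurface singularity $f$ the semiuniversal deformation of the singularity and the semiuniversal unfolding of the function germ differ only by the extra parameter $t_0$ attached to the constant $1\in Q_f$ (equivalently, by the $\mathbb{C}^*$-action rescaling $f$), and that adding this parameter does not change whether a deformation is equisingular; hence the equisingular stratum with respect to right equivalence is obtained from the contact one simply by discarding $t_0$. (ii) Identify the monomials whose parameters are retained: by Kouchnirenko's formula (Theorem~\ref{theorem:milnorNumber}), $\mu$ depends only on the Newton polygon for non-degenerate germs, and adding a term strictly below $\Gamma(f)$ generically changes $\Gamma(f)$ and hence $\mu$, so such parameters cannot lie in the equisingular stratum; this is the argument realizing the ``strictly below'' cutoff. (iii) Show one may choose the retained on-and-above monomials to form a \emph{regular} basis in the sense of Definition~\ref{defn:regularBasis}: this is where the regular-basis machinery (Proposition~9.4 of \cite{A1974}, quoted in the excerpt) enters — pick a regular basis for $f$, and note its elements on and above $\Gamma(f)$ span the same graded pieces of $Q_f/(\text{contribution of terms below }\Gamma(f))$ that \cite{GLS2007} uses, so they can serve as the $g_i$. (iv) Count: the number of such $g_i$ is the inner modality by Definition~\ref{def:innermodality}, which equals the proper modality by the correspondence we are establishing, which equals the modality by \cite[\S 3, Theorem~6]{G1974}; hence $m$ in the statement is the modality, and the base of $F$ has the smallest possible dimension among equisingular unfoldings, giving miniversality.

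The main obstacle I anticipate is step (iii) together with the precise matching in step (i): \cite[Corollary~2.71]{GLS2007} produces \emph{some} vector-space complement of the appropriate tangent space realized by monomials, but to land exactly on the notion of ``regular basis for $Q_f$ on and above $\Gamma(f)$'' used elsewhere in this paper one must check that the filtration-by-$w(f)$ bookkeeping of Definition~\ref{defn:regularBasis} is compatible with the Newton-filtration used in \cite{GLS2007} and \cite{BGM2011}; concretely, one needs that $E^{w}_{>D}\cap\Jac(f)$ and the ``terms below $\Gamma(f)$'' ideal interact as expected so that a regular basis restricted to on-and-above monomials still spans the equisingularity-tangent complement. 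I expect this to be routine but slightly technical, handled by the same degree-by-degree argument as in the proof of the proposition quoted from \cite{A1974}. The rest — steps (ii) and (iv) — is a direct application of Kouchnirenko's theorem and Gabrielov's theorem, both available in the excerpt, and the translation in (i) is a well-known feature of deformation theory of hypersurface singularities that can be cited from \cite{GLS2007}.
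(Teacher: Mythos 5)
Your proposal takes the same route as the paper: the paper gives no detailed proof here, merely stating that the result is "an easy consequence" of \cite[Corollary~2.71]{GLS2007}, translated from contact to right equivalence — exactly the plan you lay out. Your steps (ii)--(iv) fill in what that translation requires in a reasonable way.

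One imprecision worth flagging is in your step (i). You claim that the semiuniversal contact deformation and the versal right-equivalence unfolding differ "only by the extra parameter $t_0$ attached to the constant $1\in Q_f$, equivalently by the $\mathbb C^*$-action rescaling $f$." That is not the correct dictionary. The base of the miniversal contact deformation is (a basis of) the Tjurina algebra $T_f=\mathbb C\{x,y\}/(\langle f\rangle+\Jac(f))$ of dimension $\tau$, while the unfolding base is modelled on $Q_f=\mathbb C\{x,y\}/\Jac(f)$ of dimension $\mu$; the discrepancy is the image of $\langle f\rangle$ in $Q_f$, of dimension $\mu-\tau$, which is $1$ only in the quasihomogeneous case and in general is not spanned by the constant. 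The direction "$1$" is what separates right-left from right equivalence, not contact from right. What makes the translation work is not that one drops a single parameter, but that the $\mu$-constant (equisingular) stratum is an intrinsic subgerm, independent of which semiuniversal object one cuts it out of, and that for a Newton non-degenerate germ the stratum's tangent space is in both cases complementary to the part of $\Jac(f)$ (resp.\ $\langle f\rangle+\Jac(f)$) supported on or above $\Gamma(f)$ — so the on-and-above regular-basis monomials furnish the parameters in either picture. With this correction the rest of your argument goes through, and matches the logical structure the paper relies on.
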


It follows from the above result that:

\begin{theorem}
For a germ $f\in\m^2\subset\mathbb C\{x,y\}$ with a non-degenerate Newton boundary the proper modality and inner modality coincide.
\end{theorem}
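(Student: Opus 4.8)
The plan is to deduce this from the chain of results just established. By Greuel--Gusein-Zade's Theorem (\S3, Theorem 6 of \cite{G1974}), the modality $m$ of $f$ equals its proper modality, which by Definition~\ref{def:properModality} is the dimension at the origin of the set of parameters $t$ in a miniversal unfolding $\F_t$ for which $\F_t$ has a singular point with Milnor number $\mu(f)$. So it suffices to identify this dimension with the inner modality, i.e. with the number of monomials in a regular basis for $Q_f$ lying on or above $\Gamma(f)$. For this I would invoke Theorem~\ref{corollary:unfolding}: since $f$ has a non-degenerate Newton boundary, an explicit miniversal equisingular unfolding is $F(x,y,t)=f+\sum_{i=1}^{m}t_ig_i$, where $g_1,\ldots,g_m$ represent a regular basis for $Q_f$ on and above $\Gamma(f)$. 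Hence the number of parameters $m$ appearing here is exactly the inner modality of $f$.

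The key point is then that the base of \emph{this} particular miniversal unfolding already parametrizes only (a neighborhood in) the $\mu$-constant stratum: because the unfolding in Theorem~\ref{corollary:unfolding} is \emph{equisingular}, every fiber $F_t$ has Milnor number $\mu(f)$, so the set of $t$ for which $F_t$ has a singular point with Milnor number $\mu(f)$ is all of the base, which is $m$-dimensional. Since the proper modality is defined via \emph{any} miniversal unfolding (all miniversal unfoldings being equivalent, and the dimension in Definition~\ref{def:properModality} being an invariant of the right-equivalence action, hence independent of the choice), we may compute it using the unfolding of Theorem~\ref{corollary:unfolding}. This yields proper modality $=m=$ inner modality, which is the assertion.

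The one step requiring a little care — and the place where I expect the main (mild) obstacle — is the claim that the dimension in Definition~\ref{def:properModality} is genuinely independent of the choice of miniversal unfolding, so that we are entitled to substitute the convenient one from Theorem~\ref{corollary:unfolding}. This follows because any two miniversal unfoldings are equivalent (each induced from the other via an isomorphism of bases, up to the right-equivalence group action that smoothly depends on $t$), and such an equivalence carries the locus $\{t : \F_t \text{ has a singular point with Milnor number } \mu(f)\}$ of one to that of the other while preserving dimension at the origin; one should also note that the ``extra'' direction in a non-miniversal versal unfolding is transverse to this locus, so restricting to a miniversal one does not change the dimension. Granting this standard fact, together with the equisingularity statement in Theorem~\ref{corollary:unfolding}, the proof is immediate.
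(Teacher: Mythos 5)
The essential gap is a conflation of two different notions: a \emph{miniversal unfolding} and a \emph{miniversal equisingular unfolding}. Theorem~\ref{corollary:unfolding} supplies the latter: an unfolding with $m$ parameters (where $m$ equals the inner modality) which is versal and of minimal base dimension \emph{within the class of equisingular unfoldings}. It is not a miniversal unfolding of $f$ in the sense required by Definition~\ref{def:properModality}: a miniversal unfolding has base dimension on the order of $\mu(f)$, and its generic fibers are smoothings, so for $\mu(f)\ge 1$ a miniversal unfolding can never be equisingular. When you write ``the base of this particular miniversal unfolding already parametrizes only the $\mu$-constant stratum'' and later ``we may compute it using the unfolding of Theorem~\ref{corollary:unfolding},'' you are substituting the miniversal equisingular unfolding where the definition of proper modality demands a genuine miniversal unfolding. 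The independence-of-choice argument you develop is true for miniversal unfoldings in the ordinary sense, but it does not legitimize this substitution, and the chain breaks at exactly this point.

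What is needed is the bridge between the two bases: locally at the origin, the base of the miniversal equisingular unfolding is naturally identified with the $\mu$-constant stratum inside the base of a miniversal unfolding. That identification (which is the substance of \cite{GLS2007}, Corollary~2.71, the paper's reference for Theorem~\ref{corollary:unfolding}) is precisely what makes the deduction go through: proper modality $=$ dimension of the $\mu$-constant stratum in the miniversal base $=$ dimension of the miniversal equisingular base $= m =$ inner modality. Your final numerical conclusion is correct, but this identification is absent from your argument. Two minor further remarks: the theorem you attribute to ``Greuel--Gusein-Zade'' is Gabri\'elov's; and that result (modality $=$ proper modality) is not actually used in your proposal, since the statement to be proved compares proper and inner modality directly.
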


The following general observation on the behaviour of bases of the local algebra in families is made in \citep{A1974}.

\begin{lemma}\label{lemma:regularBasis}
Assume that a family of smooth function germs $f$ depending smoothly on a finite number of parameters has $0$ as an isolated singularity with the same Milnor number $\mu$ for all values of the parameters of the family. Then every basis of the local algebra $Q_f$ of the function corresponding to the value $0$ of the parameter remains a basis for nearby values of the parameter.
\end{lemma}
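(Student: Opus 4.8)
The plan is to prove Lemma~\ref{lemma:regularBasis} by the standard semicontinuity argument for the dimension of the local algebra, combined with the fact that a set of $\mu$ elements of a $\mu$-dimensional vector space is a basis as soon as it is a spanning set (or, dually, as soon as it is linearly independent). So first I would fix the value $0$ of the parameter, write $f_0$ for the corresponding germ, and let $e_1,\ldots,e_\mu$ be polynomial (or power series) representatives of a basis of $Q_{f_0}$, where $\mu=\mu(f_0)$. For a nearby parameter value $t$ I want to show that the images $\bar e_1(t),\ldots,\bar e_\mu(t)$ of the \emph{same} representatives in $Q_{f_t}=\mathbb C\{x\}/\Jac(f_t)$ again form a basis.

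The key step is the semicontinuity of $\dim_{\mathbb C} Q_{f_t}=\mu(f_t)$: by hypothesis this dimension is constant, equal to $\mu$, for all parameter values in the family. Now consider the $\mathbb C$-linear map $\mathbb C^\mu\to Q_{f_t}$ sending the $i$-th standard basis vector to $\bar e_i(t)$. At $t=0$ this map is an isomorphism. The function $t\mapsto \operatorname{rank}$ of this map is lower semicontinuous (a matrix of its coefficients with respect to, say, a Gr\"obner/monomial basis of $Q_{f_t}$ depends continuously — even holomorphically — on $t$, since $\Jac(f_t)$ does, and having some nonvanishing minor is an open condition). Hence on a neighborhood of $0$ the rank is $\ge\mu$; but the target $Q_{f_t}$ has dimension exactly $\mu$, so the rank is exactly $\mu$ and the map is surjective, hence bijective. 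Therefore $\bar e_1(t),\ldots,\bar e_\mu(t)$ span and so form a basis of $Q_{f_t}$ for all $t$ in a neighborhood of $0$.

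The one point that needs a little care — and which I expect to be the main obstacle to making the argument fully rigorous — is the assertion that the local algebra $Q_{f_t}$, or at least its relevant finite-dimensional truncation, varies ``continuously'' or ``flatly'' in $t$ in a way that legitimizes the semicontinuity-of-rank step. This is where one invokes finite determinacy (Theorem~\ref{thm fin det}): since the Milnor number is bounded by $\mu$ throughout the family, all $f_t$ are $k$-determined for a common $k=\mu+1$, so one may replace each $f_t$ by its $(k{+}1)$-jet and work in the finite-dimensional jet space; then $Q_{f_t}$ is the cokernel of a matrix over $\mathbb C[x]/\m^{k+1}$ whose entries are polynomial in $t$, and standard linear algebra over this Artinian base gives both the semicontinuity and the continuous dependence of the structure constants. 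Alternatively one cites the flatness of the relative Milnor algebra for a $\mu$-constant family, e.g.\ from \citet{GLS2007}. Once this framework is set up, the remaining steps are the routine linear-algebra facts recalled above, and the conclusion is immediate.
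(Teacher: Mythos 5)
Your proposal is correct and takes essentially the same approach as the paper: both pass via finite determinacy to a finite‑dimensional presentation of $Q_{f_t}$ depending smoothly on $t$, deduce that the fixed representatives $e_1,\ldots,e_\mu$ remain a spanning set near $t=0$ (the paper via coherence of the relative Milnor module and Nakayama, with the very same ``cokernel of a matrix'' rank‑semicontinuity argument spelled out in its footnote), and then use the hypothesis $\mu(t)\equiv\mu$ to upgrade spanning to a basis.
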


\begin{proof}
Let $A=\mathbb C\{t\}$, $t=(t_1,\ldots,t_n)$, let $F\in R=A\{x,y\}$ define the family, write $f_t(x,y)=F(x,y,t)$, and let $M(t):=R/\Jac(f_t)$. By assumption, the Milnor number $\mu(t):=\operatorname{dim}_{\mathbb{C}}(M(t))$ of the germ defined by $f_t$ is finite. Let $m_1,\ldots,m_k\in R$ be monomials inducing a basis of $M(0)$. By Theorem \ref{thm fin det}, the $M(t)$ are locally finitely presented, hence form a coherent sheaf $\mathcal{M}$ on $\operatorname{Spec}(A)$. By the Lemma of Nakayama, the monomials $m_1,\ldots,m_k$ forming a basis of the fiber $M(0)$ extend to sections of $\mathcal{M}$ forming a generating system of the stalk at $0$. These sections are defined in 
a Euclidean open neighborhood of $0$. 
Since $\mathcal{M}$ is coherent, the sections also induce a generating system of every fiber $M(s)$ for $s\in U$. Since $\mu$ is assumed to be constant, $m_1,\ldots,m_k$ form a basis of $M(s)$ for $s\in U$.\footnote{We can formulate the proof in a more elementary way: Since $\mathcal{M}$  is locally finitely presented, that is, the cokernel of a matrix (which varies smoothly in terms of $t$), the monomials $m_1,\ldots,m_k$ (which do not vary in terms of $t$) form at $t=0$ a basis of a transversal space of the image of the presentation matrix, hence, they stay locally a generating system of a transversal space.}

\end{proof}

The proof of the lemma in particular yields the well-known fact, that the Milnor number is upper semi-continuous in families, that is, $\mu(s)\le \mu(0)$. In the case of germs with non-degnerate Newton boundary, we can refine the argument showing that a regular basis stays regular:

\begin{prop}\label{prop:regularBasis}
Let $f_0$ be a 
germ with a non-degenerate Newton boundary 
$\Gamma(f_0)$ and let $f$ be a  
germ with the same Newton polygon as $f_0$ and non-degenerate Newton boundary. Then for $f$ sufficiently close\footnote{Sufficiently close refers to the Euclidean distance in the $(\mu+1)$-jet space.} to $f_0$, the monomials in $\Mon(x,y)$ representing a regular basis for $f_0$ with respect to the  filtration defined by $\Gamma(f_0)=\Gamma(f)$ also represent a regular basis for $f$ with respect to the same filtration.
\end{prop}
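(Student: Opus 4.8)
The plan is to combine the semicontinuity of the Milnor number (already established via Lemma~\ref{lemma:regularBasis}) with the degree-by-degree structure of the filtration induced by the common Newton polygon $\Gamma(f_0)=\Gamma(f)$. Write $w=w(f_0)=w(f)$ for the piecewise weight associated to this polygon; the key point is that the filtration $E^w_{\bullet}$ is \emph{fixed}, independent of the germ, since only the Newton polygon enters its definition. First I would fix a monomial regular basis $\{e_1,\ldots,e_\mu\}$ for $f_0$ and, for each relevant degree $D$, record that by Definition~\ref{defn:regularBasis} the classes of the degree-$D$ basis monomials $R_D:=\{e_i : \deg_w(e_i)=D\}$ are independent modulo $\Jac(f_0)+E^w_{>D}$; equivalently (using the second remark after Definition~\ref{defn:regularBasis}) they form a basis of $V_D(f_0):=E^w_D/\big((E^w_D\cap\Jac(f_0))+E^w_{>D}\big)$. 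Since there are only finitely many $D$ with $E^w_D\not\subset\Jac(f_0)$, it suffices to handle each such $D$ separately and then take $f$ in the intersection of the corresponding (finitely many) open conditions.

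For a fixed such $D$, the space $V_D$ depends on the germ $g$ only through the finite-dimensional datum $\Jac(g)\cap E^w_D \bmod E^w_{>D}$, which in turn is governed by the $w$-principal parts (jets along the facets) of the partial derivatives of $g$; crucially, $E^w_D$ is finite-dimensional modulo $E^w_{>D}$, so everything in sight is a finite linear-algebra problem whose data vary continuously as $g$ varies in the $(\mu+1)$-jet space near $f_0$ — because $g$ has the same Newton polygon as $f_0$, no monomials strictly below $\Gamma(g)$ appear, so the relevant coefficients deform continuously. Thus $\dim_{\C} V_D(g)$ is lower semicontinuous in $g$ (a rank going up under perturbation can only shrink the quotient): for $g$ sufficiently close to $f_0$ we get $\dim V_D(g)\le \dim V_D(f_0)=\#R_D$. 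Summing over $D$ and using $\mu(g)\le\mu(f_0)$ from semicontinuity, together with the Newton-nondegeneracy hypothesis on $f$ (which by Theorem~\ref{theorem:milnorNumber} pins $\mu(f)=N(f)=N(f_0)=\mu(f_0)$ since the Newton polygon is the same), I would argue that in fact $\dim V_D(f)=\#R_D$ for every $D$: if any inequality were strict the total $\sum_D \dim V_D(f)$ would be $<\mu(f)$, but a regular basis for $f$ (which exists by Proposition in the excerpt) has exactly $\mu(f)$ elements distributed among the $V_D(f)$, forcing equality in each degree.

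Finally, with $\dim V_D(f)=\#R_D$ in hand, I would show the classes of $R_D$ remain \emph{independent} in $V_D(f)$, not merely that the dimensions match: independence in $V_D(f_0)$ is the non-vanishing of a suitable maximal minor of the matrix expressing "the $e_i$ together with a spanning set of $(E^w_D\cap\Jac)+E^w_{>D}$" inside $E^w_D/E^w_{>D}$; this minor is a continuous (indeed polynomial) function of the coefficients of $g$ that is nonzero at $g=f_0$, hence nonzero for $g$ near $f_0$. Since the $R_D$ are $\#R_D = \dim V_D(f)$ independent elements, they form a basis of $V_D(f)$; doing this for all finitely many relevant $D$ simultaneously shows $\{e_1,\ldots,e_\mu\}$ is a regular basis for $f$. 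I expect the main obstacle to be the bookkeeping that makes "the linear-algebra data vary continuously with the germ" precise — specifically, verifying that $\Jac(g)\cap E^w_D$ modulo $E^w_{>D}$ is computed from finitely many coefficients varying continuously, which relies essentially on the coincidence of Newton polygons so that no lower-order monomials suddenly appear; once that is set up, the semicontinuity and the minor-nonvanishing arguments are routine. An alternative, slicker route would be to invoke Lemma~\ref{lemma:regularBasis} for the Milnor-algebra part and then only argue the regularity refinement, but the self-contained degree-by-degree version above seems cleanest.
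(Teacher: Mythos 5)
Your high-level plan is the same as the paper's: work degree by degree in the $w$-filtration induced by the common Newton polygon, show $\dim V_D$ can only drop (or stay equal) under perturbation, and then use the global constraint $\sum_D \dim V_D(g)=\mu(g)=N(\Gamma)=\mu(f_0)=\sum_D\dim V_D(f_0)$ (valid because $f$ is assumed Newton non-degenerate with the same polygon) to force equality in every degree. The invocation of Theorem~\ref{theorem:milnorNumber} to pin $\mu(f)=\mu(f_0)$ is exactly right and is implicit in the paper. A small terminology slip: you call $\dim V_D$ ``lower semicontinuous'' but your inequality $\dim V_D(g)\le\dim V_D(f_0)$ is \emph{upper} semicontinuity of $\dim V_D$, equivalently lower semicontinuity of $\dim W(g)$ where $W(g):=\big((\Jac(g)\cap E^w_D)+E^w_{>D}\big)/E^w_{>D}$.

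The genuine gap is precisely the step you flag as ``bookkeeping'' and then call routine: it is not. The subspace $W(g)$ is the degree-$D$ piece of the initial ideal $\operatorname{in}_w(\Jac(g))$, and this is \emph{not} manifestly the image of a fixed finite family of vectors varying continuously with $g$. Elements $a\,g_x+b\,g_y$ can land in $E^w_D$ with $a,b$ of lower $w$-filtration than expected whenever cancellations occur among the principal parts, so $W(g)$ may contain classes that are not naive monomial multiples of $\operatorname{in}_w(g_x),\operatorname{in}_w(g_y)$; whether and when this happens depends on whether $\{g_x,g_y\}$ is a standard basis with respect to $w$, which is a nontrivial condition. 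Consequently, your ``rank of a matrix goes up'' argument and your ``maximal minor nonvanishing'' argument both presuppose a continuously-varying presentation of $W(g)$ that has not been exhibited. The paper supplies exactly this missing ingredient: it packages the family of quotients $M_D(t)=E^w_D/\big((\Jac(f_t)\cap E^w_D)+E^w_{>D}\big)$ over an unfolding into a coherent sheaf $\mathcal{M}_D$ on the parameter space and applies Nakayama, which simultaneously yields the semicontinuity \emph{and} shows that the monomials $R_D$ continue to generate nearby fibers $M_D(s)$. Once you have that, the dimension count already forces $R_D$ to be a basis, so the entire final paragraph of your proposal (establishing independence via minors) is unnecessary and, as written, suffers from the same unproven-continuity issue. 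In short: same strategy, but the coherence/Nakayama mechanism that makes the semicontinuity and the persistence of generators rigorous is the substance of the proof, not routine bookkeeping.
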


\begin{proof}
Let $\mu=\mu(f_0)$, $w=w(f_0)$, $A=\mathbb C[t]$, $t=(t_1,\ldots,t_n)$, and let $F\in A\{x,y\}$ be an unfolding of $f_0$ covering the subspace of germs with Newton polygon $\Gamma(f_0)$ of the $(\mu+1)$-jet space, and write $f_t(x,y)=F(x,y,t)$. For the $A$-module $E^w_D$ generated by the monomials $m$ of $w\dash\deg(m)\geq D$ in the variables $x,y$, consider the vector spaces
\[M_D(t):=E^w_D/(E^w_D\cap \Jac(f_t)+E^w_{>D}).\] 
and let $m_1,\ldots,m_k\in E^w_D$ be monomials inducing a basis of $M_D(0)$. The $M_D(t)$ form a coherent sheaf $\mathcal{M}_D$ on $\operatorname{Spec}(A)$. 
Arguing as in the proof of Lemma \ref{lemma:regularBasis}, the sections also induce a generating system of every fiber $M_D(s)$ for $s\in U$. In particular, we have $\dim_\mathbb{C} M_D(s)\le \dim_\mathbb{C}M_D(0)$. On the other hand, \[\bigoplus_DM_D(s)=\mathbb C\{x,y\}/\Jac(f_s) \text{ and } \dim\bigoplus_DM_D(s)=\mu=\dim\bigoplus_DM_D(0),\]which implies that $\dim_\mathbb{C}M_D(0)=\dim_\mathbb{C}M_D(s)$ for all $D$. Therefore a regular basis for $f_0$ is a regular basis for $f_s$.

\end{proof}

As a next step in finding a normal form for the $\mu$-constant stratum of a germ with a non-degenerate Newton boundary, we will observe that the topological type of all germs in the $\mu$-constant stratum is the same. We will conclude that all germs can be transformed to germs with the same Newton polygon, and that for each of the these germs the Newton boundary is non-degenerate. 

\begin{defn}
Let $f,g\in\mathbb C\{x,y\}$ be germs with singular points at $0$. Then $f$ is \textbf{topologically equivalent} to $g$ when there are neighbourhoods $U$ and $V$ of $0$ in $\mathbb C^2$ and a homeomorphism $\phi:U\to V $ with $\phi(0)=0$  such that for the zero sets $X$ of $f$ and $Y$ of $g$ we have $$\phi(X\cap U)=Y\cap V.$$ The \textbf{topological type} of $f$ is the equivalence class of $f$.
\end{defn}

\begin{theorem}\label{theorem:Brieskorn}(\citet{BK1986}, Theorem 15)
Let $f\in\mathbb C\{x,y\}$ and $g\in\mathbb C\{x,y\}$ be germs with singularities at $0$. Then $f$ and $g$ are topologically equivalent if and only if there exists a bijection between the irreducible branches of $f$ and $g$ (
that is, the factors of $f$ and $g$ in $\mathbb C\{x,y\}$) such that 
\begin{itemize}[leftmargin=10mm]
\item the corresponding characteristic exponents are the same, and
\item the intersection numbers between corresponding branches coincide.
\end{itemize} 

\end{theorem}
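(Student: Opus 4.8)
The plan is to translate the statement into one about embedded links and to invoke the classical topological theory of plane curve singularities. By Milnor's conical structure theorem, for small $\epsilon>0$ the pair $(B_\epsilon, f^{-1}(0)\cap B_\epsilon)$ is homeomorphic to the cone over the pair $(S^3_\epsilon, L_f)$, where $L_f:=f^{-1}(0)\cap S^3_\epsilon$ is the \emph{link} of the singularity, and any germ of a homeomorphism $\phi$ as in the definition of topological equivalence can be isotoped so as to respect these small spheres. Thus $f$ and $g$ are topologically equivalent if and only if the pairs $(S^3,L_f)$ and $(S^3,L_g)$ are homeomorphic. What remains to be proved is then: the embedded link type of $L_f$ is a complete invariant, and it is encoded precisely by the characteristic exponents of the individual branches of $f$ together with the pairwise intersection multiplicities.

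First I would handle a single irreducible branch. From a Newton--Puiseux parametrization $t\mapsto\bigl(t^{n},\sum_{i\ge n}a_it^{i}\bigr)$ one builds $L_f$ inductively as an iterated torus knot: the successive characteristic pairs extracted from the Puiseux exponents (those $i$ at which a new denominator appears) are exactly the cabling parameters. That an iterated torus knot is determined up to isotopy by its cabling parameters and, conversely, that these parameters can be recovered from the knot --- for instance from the Alexander polynomial, or intrinsically from the canonical torus (JSJ) decomposition of the knot exterior, which for such knots is a graph manifold --- is classical (Burau, Zariski, Brieskorn--Kn\"orrer). This settles the equivalence in the irreducible case.

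For several branches I would work with the dual tree of the minimal embedded resolution of $f^{-1}(0)$: after finitely many blow-ups the total transform is a normal crossing divisor, and the weighted dual tree, decorated with an arrowhead for the strict transform of each branch, is a complete topological invariant of $L_f$ (the link is a graph manifold and the plumbing tree is read off from its JSJ decomposition, by Waldhausen and Neumann). It then remains to show that this decorated tree carries exactly the same information as the characteristic exponents of each branch together with the pairwise intersection multiplicities. In one direction this is the standard dictionary: the subtree resolving a single branch is governed by its Puiseux pairs, and the intersection multiplicity $(f_i\cdot f_j)_0$ equals the number of infinitely near points common to $f_i$ and $f_j$, i.e. the length of the shared initial segment of their resolution paths, which the tree records; hence the invariants in the statement are determined by the topological type. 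In the other direction one reconstructs the tree from the data: resolving each branch on its own from its Puiseux pairs, and using the pairwise intersection numbers to locate, for every pair of branches, the last infinitely near point they share, one assembles a single decorated tree, and this determines $(S^3,L_f)$, hence the topological type of $f$.

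The crux --- and the place I expect the real work to be --- is this last multi-branch reconstruction: proving that the pairwise intersection multiplicities contain no less than the full relative position of the branches, so that once each branch is resolved and each pair of splitting points is located, the combinatorial type of the whole tree of infinitely near base points is forced. This works because that tree is determined from the leaves downward: any two branches travel together through a well-defined initial string of blow-ups and then separate, and knowing every pairwise intersection number pins that separation point down inside each branch's own resolution string; what must be checked is that these pairwise constraints are always mutually consistent and glue to a single tree, which they are because for plane curves the configuration of infinitely near points of a finite family of branches is itself a tree. Passing freely back and forth between the resolution graph and the invariants of the statement, via the classical formulas relating Puiseux pairs, the multiplicity sequence and intersection multiplicities, then yields both implications.
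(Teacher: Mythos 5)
This theorem is cited in the paper from Brieskorn--Kn\"orrer (their Theorem~15) and is not proved in the paper itself, so there is no in-paper argument to compare against. Your sketch follows the standard approach (and, as far as I recall, is the route taken in the cited source): reduce to the pair $(S^3,L_f)$ via the conic structure theorem, handle a single branch by the classification of iterated torus knots by cabling data read off from Puiseux pairs, and handle several branches via the decorated dual graph of the minimal embedded resolution and Waldhausen/Neumann graph-manifold theory. So the overall architecture is sound and matches what is expected.

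One substantive slip worth fixing: you write that the intersection multiplicity $(f_i\cdot f_j)_0$ ``equals the number of infinitely near points common to $f_i$ and $f_j$.'' This is false as stated; the correct statement is Noether's formula
\[
(f_i\cdot f_j)_0=\sum_{p}\,m_p(f_i)\,m_p(f_j),
\]
where the sum runs over all infinitely near points $p$ shared by the two branches and $m_p$ denotes the multiplicity of the strict transform at $p$. The intersection number is a weighted count, not a cardinality, and the weights matter when you try to pin down the separation point: the reconstruction step you identify as the crux depends on showing that, given the multiplicity sequence of each branch (which is determined by its characteristic exponents), the value of the weighted sum uniquely determines how far the two branches travel together. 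That implication is true, but it requires the monotonicity coming from the multiplicity sequences, not a mere point count, so the argument as written has a genuine (if repairable) gap precisely at the step you correctly flagged as delicate. You should also be a little more careful about the claim that a germ homeomorphism ``can be isotoped so as to respect these small spheres''; the direction from topological equivalence of germs to homeomorphism of the sphere pairs is a real theorem and not a routine isotopy, though it is classical for plane curves.
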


\begin{theorem} \label{theorem:Le}(\citet{LR1976}, Theorem 2.1)
Let $F({ x},t)$ be a family of $\mu$-constant polynomials in ${ x}=(x_1,\ldots,x_n)$, $n\neq 3$, with isolated singularity at $0$ and coefficients which are smooth complex valued functions of $t\in I=[0,1]$. Then the topological type of the singularities is constant in the family.
\end{theorem}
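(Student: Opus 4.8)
The plan is to invoke the classical L\^e--Ramanujam $\mu$-constant theorem, which we take as a black box from \citet{LR1976}; combined with Theorem~\ref{theorem:Brieskorn} it shows that the characteristic exponents and pairwise intersection numbers of the branches are constant along a $\mu$-constant family. In the applications in the present paper it is moreover only needed for plane curve germs ($n=2$), where it already follows from Zariski's equisingularity theory ($\mu$-constancy along a family of plane curve singularities being equivalent to equisingularity, hence to constancy of the embedded topological type). For completeness I indicate how the general statement is proved.

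First I would promote $\mu$-constancy to a \emph{uniform good representative} of the family: using conservation of the Milnor number --- no critical point of $f_t$ can collide with $0$ without raising $\mu(f_t,0)$ --- together with compactness of $I$, one obtains radii $\epsilon\gg\eta>0$ such that for every $t\in I$ the map $f_t\colon\bar B_\epsilon\cap f_t^{-1}(\partial D_\eta)\to\partial D_\eta$ is the Milnor fibration of $f_t$, every sphere $S_{\epsilon'}$ with $\epsilon'\le\epsilon$ is transverse to $V(f_t)$, and no critical point of the total map leaves $\bar B_\epsilon\times I$. The family of links $\{(x,t)\mid f_t(x)=0,\ |x|=\epsilon\}\subset S_\epsilon\times I$ is then a compact submanifold mapping submersively and properly to $I$, so by Ehresmann's theorem and the isotopy extension theorem the pairs $(S_\epsilon,K_t)$, where $K_t=V(f_t)\cap S_\epsilon$, are all ambient isotopic; by Milnor's conical structure theorem $(\bar B_\epsilon,V(f_t)\cap\bar B_\epsilon)$ is the cone on $(S_\epsilon,K_t)$, which gives constancy of the topological type.

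The substance of the argument of L\^e and Ramanujam, however, lies in establishing the uniform good representative and in upgrading the above to topological triviality of the whole family, and this is where the dimension hypothesis enters. One compares the Milnor fibres $F_0$ and $F_1$ inside the total space of the family of Milnor fibrations: each $F_t$ is $(n-2)$-connected with reduced homology free of rank $\mu$ concentrated in degree $n-1$, and $\mu$-constancy forces the cobordism between $F_0$ and $F_1$ inside the total space to induce isomorphisms on homology, hence --- $F_t$ being simply connected when $n\ge 3$ --- on homotopy, so that it is an $h$-cobordism. The $h$-cobordism theorem then supplies a product structure over $I$, trivialising the family of Milnor fibrations and hence the full Milnor data, including the singular central fibre; coning off produces the homeomorphism $(\bar B_\epsilon,V(f_0)\cap\bar B_\epsilon)\cong(\bar B_\epsilon,V(f_t)\cap\bar B_\epsilon)$.

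The hard part --- and the reason for excluding $n=3$ --- is precisely this appeal to the $h$-cobordism theorem: the cobordism at issue has real dimension $2n-1$, so the theorem is available only when $2n-1\ge 6$, that is $n\ge 4$. The surface case $n=3$ is genuinely outside its reach (and remains open in general), whereas $n\le 2$ is elementary ($n=1$) or classical plane-curve equisingularity ($n=2$, the only case we actually need). Since for our purposes only the statement is required, I would not reproduce the argument in full.
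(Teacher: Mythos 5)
The paper itself offers no proof of this statement: it is quoted verbatim as Theorem~2.1 of \citet{LR1976} and used as a black box, which is exactly the stance you take in your opening sentence, so your treatment matches the paper's. Your further observation that the present paper only ever invokes the result for plane curve germs ($n=2$), where $\mu$-constancy is equivalent to equisingularity and hence to constancy of the embedded topological type, is correct and worth making explicit. The sketch of the general L\^e--Ramanujam argument that you give for completeness captures the right ingredients (uniform good representative via compactness and conservation of $\mu$, comparison of Milnor fibrations, homology control from $\mu$-constancy forcing an $h$-cobordism, and the $h$-cobordism theorem to conclude). One small imprecision: in L\^e--Ramanujam the $h$-cobordism to which the theorem is applied is the shell $V(f_t)\cap(\overline{B}_{\epsilon_0}\setminus B_{\epsilon_1})$ between two Milnor balls, a manifold of real dimension $2(n-1)=2n-2$, rather than the $(2n-1)$-dimensional sweep of Milnor fibres through the family; the numerical conclusion is the same ($2n-2\ge 6$ forces $n\ge 4$, and $n=3$ is exactly the $4$-dimensional $h$-cobordism obstruction), so this does not affect the substance of your discussion.
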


\begin{remark}
Since the $\mu$-constant stratum of a plane curve singularity is smooth, see \cite{Wahl} and  \cite[Theorem II.2.61, Corollary II.2.67]{GLS2007} for an alternative proof, the $\mu$-constant stratum can locally be parametrized by polynomial families  depending smoothly on a finite number of parameters.
Hence, by Theorem \ref{theorem:Le}, the topological type is constant on the $\mu$-constant stratum. By Theorem \ref{theorem:Brieskorn}, this implies that the characteristic exponents and intersection numbers of the Puiseux expansions  are constant on the $\mu$-constant stratum. 
\end{remark}

\begin{remark}\label{rmk sect 4 1}
If two germs $f_1$ and $f_2$ have the same characteristic exponents and intersection numbers, and $f_1$ has a non-degenerate Newton boundary, then $f_2$ is right-equivalent to a germ with non-degenerate Newton boundary and same Newton polygon as $f_1$. Moreover, any germ with maximal Newton number in the right-equivalence class of $f_2$ has a non-degenerate Newton boundary. Note that the converse is also true by Theorem \ref{theorem:milnorNumber}.
\end{remark}
For clarity of the presentation, the proof of this fact will be given in the next section in Proposition~\ref{prop non-deg} and Corollary~\ref{prop:PuiseuxIntersectionNewtonBoundary}. We conclude from the remark that, if the $\mu$-constant stratum contains a germ with non-degenerate Newton boundary, then every germ in the $\mu$-constant  stratum is equivalent to a germ with non-degenerate Newton boundary. 
In particular, Lemma \ref{corollary:unfolding} gives the existence of local polynomial families covering the whole $\mu$-constant stratum.
    Combining our observations we obtain:
\begin{theorem}\label{prop:newtonBoundary}
Let $f\in\mathbb C\{x,y\}$ be a convenient germ with non-degenerate Newton boundary~$\Gamma$. Then all the germs in the $\mu$-constant stratum of $f$ are equivalent to a germ with the Newton polygon $\Gamma$ and  non-degenerate Newton boundary.
\end{theorem}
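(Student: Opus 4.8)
\textbf{Proof proposal for Theorem~\ref{prop:newtonBoundary}.}
The plan is to combine the preceding results into a short chain of implications. First I would invoke Theorem~\ref{thm fin det} to reduce to the $(\mu+1)$-jet $\jet(f,\mu+1)$, so that the $\mu$-constant stratum $K$ of $f$ is, by Definition~\ref{def muconstant}, a well-defined connected subset of the $(\mu+1)$-jet space. Pick an arbitrary germ $g$ representing a point of $K$; the goal is to show $g$ is right-equivalent to a convenient germ with Newton polygon $\Gamma$ and non-degenerate Newton boundary.

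The key step is to transport the topological type along the stratum. By the remark following Theorem~\ref{theorem:Le}, the $\mu$-constant stratum of a plane curve singularity is smooth (citing \cite{Wahl}, \cite[Theorem II.2.61, Corollary II.2.67]{GLS2007}), so $K$ is locally parametrized by polynomial families depending smoothly on finitely many parameters. Since $K$ is connected, I can join the point representing $f$ to the point representing $g$ by a path inside $K$ obtained by concatenating finitely many such local families; along each piece Theorem~\ref{theorem:Le} (applicable since $n=2\neq 3$) shows the topological type is constant, and hence $f$ and $g$ have the same topological type. By Theorem~\ref{theorem:Brieskorn}, this means there is a bijection between the branches of $f$ and of $g$ preserving characteristic exponents and pairwise intersection numbers.

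Now apply Remark~\ref{rmk sect 4 1} (whose proof is deferred to Proposition~\ref{prop non-deg} and Corollary~\ref{prop:PuiseuxIntersectionNewtonBoundary} in the next section): since $f$ has a non-degenerate Newton boundary and $g$ shares its characteristic exponents and intersection numbers, $g$ is right-equivalent to a germ with non-degenerate Newton boundary and with the same Newton polygon as $f$, namely $\Gamma$. In particular that representative is convenient, because $f$ is convenient and being convenient is a property of $\Gamma$. Since $g$ was an arbitrary element of the $\mu$-constant stratum of $f$, this proves the theorem.

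The main obstacle is the globalization in the second paragraph: Theorem~\ref{theorem:Le} is stated for one-parameter families over $[0,1]$ with smooth coefficients, whereas $K$ is a priori only a connected constructible (or analytic) set, so one must argue that any two of its points can be connected by a piecewise-smooth path lying entirely in $K$. This is exactly what smoothness of the $\mu$-constant stratum of a plane curve singularity provides: locally $K$ is a manifold, hence locally path-connected, and a connected, locally path-connected space is path-connected; covering the resulting path by finitely many local coordinate charts yields the finite concatenation of smooth one-parameter families to which Theorem~\ref{theorem:Le} applies. Everything else is a direct citation of the results assembled above.
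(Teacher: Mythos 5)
Your proposal is correct and follows essentially the same line of reasoning the paper assembles in the remarks immediately preceding Theorem~\ref{prop:newtonBoundary}: smoothness of the $\mu$-constant stratum together with Theorem~\ref{theorem:Le} gives constancy of the topological type, Theorem~\ref{theorem:Brieskorn} converts this to constancy of characteristic exponents and intersection numbers, and Remark~\ref{rmk sect 4 1} (proved later via Proposition~\ref{prop non-deg} and Corollary~\ref{prop:PuiseuxIntersectionNewtonBoundary}) then yields the equivalence to a germ with Newton polygon $\Gamma$ and non-degenerate Newton boundary. Your extra care in spelling out the path-connectedness argument needed to globalize the one-parameter statement of Theorem~\ref{theorem:Le} across the whole stratum is a welcome elaboration of a step the paper treats more tersely, but the underlying proof is the same.
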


We use the following result of \citet{BGM2011} to prove the main result of the section.

\begin{prop}(\cite{BGM2011}, Corollary 4.6)\label{corollary:Markwig}
Let $f\in\mathbb C\{x,y\}$ be a convenient germ with a non-degenerate Newton boundary. Let $f_0$ be the principal part of $f$ and let $\{e_1,\ldots,e_n\}$ be the set of all monomials in a regular basis for $f_0$ lying above $\Gamma(f_0)$. Then there are $\alpha_i$ such that

\[f\sim f_0+\sum_{i=1}^{n} \alpha_ie_i.\]

\end{prop}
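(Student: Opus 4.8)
The plan is to prove this by Arnold's normal form method (\citet{A1974}): reduce to a finite Milnor number situation and then kill the higher-order terms one $w$-degree at a time.

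First I would set up the finite case. Since $f$ is convenient with non-degenerate Newton boundary, its principal part $f_0$ --- the sum of the terms of $f$ lying on $\Gamma(f)$ --- is again convenient with the \emph{same} Newton boundary, so $\Gamma(f_0)=\Gamma(f)$, the piecewise weight $w:=w(f_0)$ equals $w(f)$, and by Kouchnirenko's formula (Theorem~\ref{theorem:milnorNumber}) $\mu(f_0)=N(f_0)=N(f)=\mu(f)<\infty$. Hence a regular basis for $f_0$ is finite, only finitely many of its monomials $e_i$ lie above $\Gamma(f_0)$, and each such $e_i$ has $w$-degree strictly larger than $d:=d(f_0)$. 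Writing $f=f_0+h$ with $h\in E^w_{>d}$, the goal becomes to remove $h$ one $w$-degree at a time, at the cost of adjusting the coefficients of the $e_i$.

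The iteration would run by induction on the $w$-filtration, maintaining $f\sim f_0+\sum_i\alpha_ie_i+h$ with $h$ of filtration $>D$. Taking $g$ to be the $w$-homogeneous part of $h$ of lowest degree $D'>D$, Definition~\ref{defn:regularBasis} gives that the regular-basis monomials of $w$-degree $D'$ --- all among the $e_i$, as $D'>d$ --- span $E^w_{D'}/(E^w_{D'}\cap\Jac(f_0)+E^w_{>D'})$, so $g\equiv v_x\,\partial_xf_0+v_y\,\partial_yf_0+\sum_ic_ie_i\pmod{E^w_{>D'}}$ (sum over those $e_i$ of $w$-degree $D'$) for suitable $c_i\in\mathbb C$ and $v_x,v_y\in\m$ with $v_x\partial_xf_0,\,v_y\partial_yf_0\in E^w_{\ge D'}$. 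Applying $\phi\colon x\mapsto x-v_x,\ y\mapsto y-v_y$, one checks with the filtration that $\phi(f_0)=f_0-v_x\partial_xf_0-v_y\partial_yf_0$ modulo filtration $>D'$ and that $\phi$ fixes $\sum_i\alpha_ie_i$ and $h$ modulo filtration $>D'$, whence $\phi(f)=f_0+\sum_i\alpha'_ie_i+\tilde h$ with $\tilde h\in E^w_{>D'}$ and the $\alpha'_i$ obtained from the $\alpha_i$ by adding the $c_i$. Renaming, the induction step closes and the degrees occurring in $h$ strictly increase. To terminate, I would use that the $e_i$ are finitely many (so their $w$-degrees are bounded, say by $D_{\max}$) and that $f$ is finitely determined (Theorem~\ref{thm fin det}): once $D'$ exceeds $D_{\max}$ and a determinacy bound, $h$ lies in $E_{k+1}$ for $k=\operatorname{dt}(f)$ while $\jet(f_0+\sum_i\alpha_ie_i+h,k)=f_0+\sum_i\alpha_ie_i$, so finite determinacy yields $f\sim f_0+\sum_i\alpha_ie_i$; alternatively the infinite composite of the $\phi$'s converges in the filtration topology and one passes from formal to convergent equivalence by Artin approximation (Remark~\ref{rem Artin}).

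The hard part will be the decomposition invoked in the iteration: that every $w$-homogeneous element of $\Jac(f_0)$ of degree $>d$ lies in $\m\cdot\langle\partial_xf_0,\partial_yf_0\rangle$ modulo strictly higher filtration (Arnold's ``Condition~A''). For quasihomogeneous $f_0$ this is automatic, but for a genuinely piecewise-homogeneous $f_0$ it is exactly where the non-degeneracy of $f_0$ along each facet of $\Gamma$ enters, and it requires analysing the syzygies of $\langle\partial_xf_0,\partial_yf_0\rangle$ facet by facet --- the technical core supplied by \citet{BGM2011} (generalizing \citet{A1974}), whence this statement is their Corollary~4.6. A more high-level alternative would be to note that $s\mapsto f_0+s(f-f_0)$ is an equisingular unfolding of $f_0$ (every fibre has Newton polygon $\Gamma$ with the same facet parts, hence non-degenerate boundary and, by Kouchnirenko, constant Milnor number) and to invoke versality of the unfolding built from a regular basis of $f_0$ on and above $\Gamma$, as in Theorem~\ref{corollary:unfolding}, reading off the normal form at $s=1$; but ensuring that the parameters \emph{on} $\Gamma$ can be eliminated reduces once more to the same facet-wise analysis.
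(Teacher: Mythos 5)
The paper cites this statement from \citet{BGM2011}, Corollary~4.6, and gives no proof of its own, so there is no in-paper argument to compare against. Your sketch correctly reproduces the Arnold-type filtration reduction underlying the cited result: since $\Gamma(f_0)=\Gamma(f)$ and $\mu(f_0)=N(f_0)=N(f)=\mu(f)<\infty$ by Kouchnirenko, one writes $f=f_0+h$ with $h\in E^w_{>d}$ and removes $h$ one $w$-degree at a time, at each stage splitting the lowest homogeneous slice into an element of $\Jac(f_0)$ plus a span of regular-basis monomials above $\Gamma$, killing the Jacobian piece by a coordinate change of positive weighted filtration, and terminating via finite determinacy (or the filtration-topology limit plus Artin approximation). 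You also correctly isolate the one substantive step --- that an element of $\Jac(f_0)\cap E^w_{D'}$ can be written $v_x\partial_x f_0+v_y\partial_y f_0$ with both summands in $E^w_{\ge D'}$, i.e.\ Arnold's Condition~A --- which is automatic for quasihomogeneous $f_0$ but for a piecewise-homogeneous principal part requires the facet-wise syzygy analysis carried out in \citet{BGM2011}. Because you defer that step to the reference rather than prove it, your proposal is a faithful outline of how the cited Corollary~4.6 is obtained rather than a self-contained proof; that matches the paper's own treatment, which invokes the statement as a black box, but be aware that verifying Condition~A for a non-degenerate piecewise-homogeneous $f_0$ is where essentially all the work lies, and without it the decomposition you use in the induction step is not justified.
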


We now give the main result of this section:

 \begin{theorem}\label{theorem:normalform}
 Let $f$ be a convenient germ with non-degenerate Newton boundary. Let $f_0$ be the sum of the terms of $f$ lying on the vertex points of  $\Gamma(f)$, and let  $\{e_1,\ldots,e_n\}$ be the set of all monomials in a regular basis for $f_0$ lying on or above $\Gamma(f)$.  Then the family
 \[f_0+\sum_{i=1}^{n} \alpha_ie_i,\]
is a normal form of the $\mu$-constant stratum containing $f$. 
Restricting the parameters $\alpha_1,\ldots,\alpha_n$ to values such that every germ $f_0+\sum_{i=1}^{n} \alpha_ie_i$ has a  non-degenerate Newton boundary and the same Newton polygon as that of $f$, we obtain all germs in the $\mu$-constant stratum of $f$.  \end{theorem}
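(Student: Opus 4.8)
The plan is to verify the three conditions in Definition~\ref{def nfequ} for the map
$\Phi:\C^n\to\C[x,y]$, $\Phi(\alpha_1,\ldots,\alpha_n)=f_0+\sum_{i=1}^n\alpha_ie_i$, taking $K$ to be the $\mu$-constant stratum of $f$. The first step is to reconcile the two choices of ``base point'' appearing in the statement and in Proposition~\ref{corollary:Markwig}: there $f_0$ is the principal part (the terms on the facets of $\Gamma(f)$), whereas here $f_0$ is the smaller polynomial consisting only of the terms on the \emph{vertex points} of $\Gamma(f)$. I would observe that the vertex-point polynomial still has the same Newton polygon $\Gamma=\Gamma(f)$ and, since $f$ has non-degenerate Newton boundary, so does this $f_0$ (each $\jet(f_0,\Delta)$ is a binomial $x^ay^b(cx^p+dy^q)$ up to saturation, which is automatically non-degenerate); moreover by Remark~\ref{remark:regularBasis}/Proposition~\ref{prop:regularBasis} a regular basis for the principal part and for the vertex-point $f_0$ agree on and above $\Gamma$, since passing from one to the other is a deformation within germs with Newton polygon $\Gamma$ and non-degenerate boundary. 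Hence it is harmless to run Proposition~\ref{corollary:Markwig} with this smaller $f_0$, and the monomials $e_i$ on $\Gamma$ itself may be absorbed into $f_0$ precisely because they do not change $\Gamma$ and can be reached by rescaling; this is why the index set in the theorem is ``on or above'' rather than ``strictly above''.

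Next I would establish condition~(1), that $\Phi(\C^n)$ meets every right-equivalence class in $K$. Let $g\in K$. By the remark following Theorem~\ref{theorem:Le} together with Remark~\ref{rmk sect 4 1} (proved in Section~\ref{sec char exp} as Proposition~\ref{prop non-deg} and Corollary~\ref{prop:PuiseuxIntersectionNewtonBoundary}), $g$ is right-equivalent to a convenient germ $\tilde g$ with non-degenerate Newton boundary and Newton polygon $\Gamma$; its vertex-point part is, after rescaling the coordinates $x\mapsto \lambda x$, $y\mapsto \mu y$, exactly $f_0$. Applying Proposition~\ref{corollary:Markwig} to $\tilde g$ (with base point $f_0$, using the identification of regular bases above $\Gamma$) yields $\tilde g\sim f_0+\sum\alpha_ie_i$, so $g\sim\Phi(\alpha)$ and $\Phi(\alpha)\in K$. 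This simultaneously shows $\Phi(\C^n)$ meets all classes of $K$ and, combined with Theorem~\ref{prop:newtonBoundary}, that every point of $K$ is reached by a parameter value for which $\Phi(\alpha)$ has non-degenerate Newton boundary and Newton polygon $\Gamma$ — the last sentence of the theorem.

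For condition~(3) I would argue that the set of $\alpha$ for which $\Phi(\alpha)\notin K$ lies in a proper algebraic hypersurface of $\C^n$. For a germ $h$ of the form $f_0+\sum\alpha_ie_i$, all monomials lie on or above $\Gamma$, so $\Gamma(h)\subseteq$ (the region on/above $\Gamma$); if $h$ has non-degenerate Newton boundary equal to $\Gamma$ then $\mu(h)=N(f)=\mu(f)$ by Theorem~\ref{theorem:milnorNumber}, forcing $h\in K$. Non-degeneracy of each facet jet fails only when the saturated binomial associated to that facet has a multiple root, which is a single polynomial condition on the $\alpha_i$ that is not identically satisfied (it is violated by $\Phi(0)=f_0$, which \emph{is} non-degenerate); and the Newton polygon drops below $\Gamma$ only if the coefficient of some vertex monomial vanishes, again a proper linear condition. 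Taking the union of these finitely many proper hypersurfaces gives the required proper hypersurface $H\subset\C^n$; the condition $\mu(h)=\infty$ occurring in the example is one particular component of it. Condition~(2), finiteness of the fibers of $\Phi$ over a fixed class, follows from condition~(3) of Definition~\ref{def nfequ} applied in the opposite direction together with Mather--Yau type finiteness, or more directly from the fact that $\Phi(\C^n)$ lies in the $(\mu+1)$-jet space where right-equivalence orbits are locally closed of finite codimension and $\Phi$ is a polynomial (indeed linear in $\alpha$) map transverse to the orbit foliation by the versality statement Theorem~\ref{corollary:unfolding}; thus each orbit meets the affine space $\Phi(\C^n)$ in a set of dimension $0$, i.e.\ finitely many points. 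The main obstacle I anticipate is the bookkeeping in this first paragraph — carefully justifying that replacing the principal part by the vertex-point polynomial, and enlarging the monomial set from ``above $\Gamma$'' to ``on or above $\Gamma$'', does not disturb the conclusion of Proposition~\ref{corollary:Markwig}; everything else is a reasonably direct translation of the already-assembled results.
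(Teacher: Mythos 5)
Your overall strategy (verify the three conditions of Definition~\ref{def nfequ}, using Theorem~\ref{prop:newtonBoundary}, Proposition~\ref{corollary:Markwig}, and a dimension/Zariski argument) matches the paper's, but the central step of condition~(1) has a real gap. You claim that after passing to a representative $\tilde g$ with Newton polygon $\Gamma$, ``its vertex-point part is, after rescaling the coordinates $x\mapsto\lambda x$, $y\mapsto\mu y$, exactly $f_0$.'' This is false as soon as $\Gamma$ has three or more vertices: a diagonal rescaling multiplies the coefficient of $x^ay^b$ by $\lambda^a\mu^b$, so matching $k$ vertex coefficients is a rank-$2$ linear system in $(\log\lambda,\log\mu)$ with $k$ equations, unsolvable in general for $k\ge 3$. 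The ``extra'' vertex coefficients cannot be normalized away; they are exactly the moduli that force the index set in the theorem to include monomials \emph{on} $\Gamma$, not just strictly above it. So they cannot be ``absorbed into $f_0$'' as you suggest — they must survive as free parameters $\alpha_i$ attached to regular-basis monomials on $\Gamma$.

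Relatedly, even after fixing the rescaling problem, Proposition~\ref{corollary:Markwig} hands you $\tilde g\sim\tilde g_0+\sum_{\text{above }\Gamma}\alpha_ie_i$ where $\tilde g_0$ is the \emph{entire} principal part (all terms on $\Gamma$), not the vertex part. The passage from $\tilde g_0$ to $f_0+\sum_{\text{on }\Gamma}\alpha''_ie_i$ is not automatic and is precisely where the paper does the delicate work: it analyzes the image of $\Jac(f_0)$ in the top-degree quotient $E^w_d/E^w_{>d}$, observing that the only contributions of $w$-degree $d$ come from $x\partial f_0/\partial x$, $y\partial f_0/\partial y$, $y^{n_1}\partial f_0/\partial x$, $x^{n_2}\partial f_0/\partial y$; it then applies the corresponding diagonal scalings and shears $x\mapsto x+cy^{n_1}$, $y\mapsto y+cx^{n_2}$ to normalize at most two vertex coefficients and remove those non-vertex on-$\Gamma$ terms that are reachable by shears, concluding that every monomial of $w$-degree $d$ is either in the regular basis $B$, absent, or a vertex monomial with coefficient $1$. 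Your proposal skips this entirely. Finally, the assertion that a regular basis for the principal part of $\tilde g$ and a regular basis for $f_0$ ``agree on and above $\Gamma$'' is not delivered by Proposition~\ref{prop:regularBasis} alone, which is only a statement about germs \emph{sufficiently close} to $f_0$; the paper globalizes it through a path $\tau$ from $g_0$ to a nearby germ $f'$ with Newton polygon $\Gamma$ and a Zariski-openness argument showing $g_0$ lies in the locus where $\reg(\cdot)$ is constant. Your conditions~(2) and~(3) are in the right spirit, but without closing the gap in~(1) the proof does not go through.
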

\begin{proof}
We first show that every germ $g$ in the $\mu$-constant stratum of $f$ is equivalent to a germ in the family $f_0+\sum_{i=1}^{n} \alpha_ie_i$. Note that by Theorem \ref{prop:newtonBoundary}, every germ in the $\mu$-constant stratum of $f$ is equivalent to a germ with the same Newton polygon as $f$ and non-degenerate Newton boundary.  

Let  $B$ be a regular basis of $f_0$. By Proposition \ref{prop:regularBasis}, $B$ is also a regular basis for every 
germ with the same Newton boundary in a sufficiently small neighborhood $M'$ of $f_0$ in the $\mu$-constant stratum. By Theorem  \ref{corollary:unfolding}, there exists a Euclidean open neighborhood $M''$ of $f_0$ contained in the $\mu$-constant stratum of $f_0$ such that all the germs in  $M''$ are equivalent to at least one of the germs in the family $f_0+\sum_{i=1}^{n} \alpha_ie_i$. Let $M:=M'\cap M''$. 

Since $g$ is equivalent to a germ with the same Newton polygon as $f$ and with non-degenerate Newton boundary, we may assume without loss of generality that $\Gamma(g)=\Gamma(f)$.  
We denote the sum of the terms of $g$ on $\Gamma(g)$ by $g_0$. 

As a linear combination of all monomials on the Newton boundary of $f_0$, we obtain a germ $f'\in M$ with the same monomials as $g_0$ and with regular basis $B$. Hence, by varying coefficients of the terms on the Newton polygon, we can construct a path $\tau$ in the germ space from $g_0$ to $f'$ with constant Newton polygon. Since being degenerate is a Zariski closed condition, there are only finitely many points on the path such that the Newton boundary is degenerate. Moreover, if we denote by $\reg(h)$ the set of all regular bases of the germ $h$, then for all germs $h$ in  a Zariski open subset $U\subset \tau$, the set $\reg(h)$ is the same. Since, by Proposition \ref{prop:regularBasis}, $\reg(h)$ is constant on a Euclidean open neighborhood of $g_0$, we have $g_0\in U$.  Hence, the set $A:=\{h\in \tau |\ h \text{ Newton non-degenerate, }\reg(h) =\reg(g_0) \}\subset\tau$ is Zariski open. By construction, $\tau \cap M$ is contained in the subspace of $M$ of germs with the same Newton polygon as $f_0$ (and with regular basis $B$). This implies that the intersection $A \cap  \tau \cap M$ is non-empty, so $B\in\reg(g_0)$.

Hence, writing $e_1,\ldots, e_n$ for the elements of $B$ above the Newton polygon and $e_{n+1},\ldots,e_{r}$ for the elements of $B$ on the Newton polygon\footnote{Note that there are also elements in $B$ under the Newton polygon, which are not relevant here.}, Proposition \ref{corollary:Markwig} implies that $g\sim g_0 + \sum_{i=1}^n \alpha'_i e_i$ with some $\alpha'_i$. We will now show that $g_0+ \sum_{i=1}^n \alpha'_i e_i\sim f_0 + \sum_{i=1}^r \alpha_i e_i$ with some $\alpha_i$, which proves our claim. In fact, we will show that, after modifying $g$ without loss of generality by a right-equivalence keeping the Newton polygon of $g$, there are $\alpha''_i$ with $g_0=f_0 + \sum_{i=n+1}^r \alpha''_i e_i$. 

By the second part of Remark \ref{rmk sect 4 1} (which refers to Proposition \ref{prop non-deg} in the next section), any germ with maximal Newton number in the
right-equivalence class of a germ with non-degenerate Newton boundary also has a non-degenerate Newton boundary. So if $g'$ is right-equivalent to $g$ with $\Gamma(g')=\Gamma(g)$, then $N(g')=N(g)=\mu(g)=\mu(g')$ (using Theorem \ref{theorem:milnorNumber}), so $g'$ has maximal Newton number, and thus a non-degenerate Newton boundary. Hence, a  right-equivalence keeping the Newton polygon also keeps non-degeneracy of the Newton boundary.

Write $w:=w(f_0)=w(g_0)$ and $d:=d(f_0)=d(g_0)$. The only elements of $\Jac(f_0)$ which have terms of $w$-degree $d$ are of the form $c_1x\frac{\partial f_0}{\partial x}$, $c_2y\frac{\partial f_0}{\partial y}$, $c_3y^{n_1}\frac{\partial f_0}{\partial x}$ and $c_4x^{n_2}\frac{\partial f_0}{\partial y}$ or are linear combinations thereof.  We can make a connection between terms occuring in a basis of $E_{d}^w/(\Jac(f_0)+E_{>d}^w)$ and terms occuring in $g$ by the following argument:
\begin{enumerate}[leftmargin=7mm]
\item If there is an $n_1$ such that $\supp(c_3y^{n_1}\frac{\partial f_0}{\partial x})$ contains a monomial $x^\alpha y^\beta$ of $w$-degree $d$ (note that there can be at most one such monomial), the corresponding term in $g$ can be modified by a right equivalence of the form $x\mapsto x+ cy^{n_1}$, $y\mapsto y$, possibly generating terms of higher $w$-degree. We distinguish two cases:
\begin{enumerate}[leftmargin=7mm]
 \item If $\alpha\neq 0$, then  $x^\alpha y^\beta$ is not a vertex monomial (a monomial corresponding to vertices of the Newton polygon): Removing a vertex monomial  by the above transformation would increase the area under the Newton polygon and thus, according to Theorem~\ref{theorem:milnorNumber}, change the Milnor number.
 \item If $\alpha=0$, then via the above transformation we may assume that the coefficient of $y^\beta$ in $g$ is normalized to $1$.
 \end{enumerate} 
\item If there is a monomial of $c_4x^{n_2}\frac{\partial f_0}{\partial y}$ of $w$-degree $d$, we can agrue in the same way as in (1), using a right equivalence of the form $y\mapsto y+ cx^{n_2}$, $x\mapsto x$.
\item Depending on whether  $x\frac{\partial f_0}{\partial x}$ and $y\frac{\partial f_0}{\partial y}$ are $\mathbb{C}$-linearly independent in $E_{d}^w$ or not, the elements $x\frac{\partial f_0}{\partial x}$ and $y\frac{\partial f_0}{\partial y}$ of $\Jac(f_0)$ can remove one or two vertex monomials in a basis of $E_{d}^w/E_{>d}^w$.  After modifying $g$ by a suitable right equivalence of the form $x\mapsto ax$, $y\mapsto by$ (which again does not change the Newton polygon of $g$), we may assume that the (one or two) vertex monomials have coefficients normalized to $1$.
\end{enumerate}

We can, hence, conclude that every monomial of $w$-degree $d$ is either in $B$ or does not appear in $g$ or is a vertex monomial  of $g$ with coefficient $1$. Hence, as claimed, there are $\alpha''_i$, with $g_0=f_0 + \sum_{i=n+1}^r \alpha''_i e_i$.\bigskip

We now show that the exceptional locus in the family is given by a hypersurface in the base space, and after removing this hypersurface, we still obtain up to right-equivalence all germs in the $\mu$-constant stratum: 
First note, that if we restrict $\alpha_1,\ldots,\alpha_n$ to values such that every germ $f_0+\sum_{i=1}^{n} \alpha_ie_i$ has a  non-degenerate Newton boundary and Newton polygon $\Gamma(f)$, by Theorem~\ref{theorem:milnorNumber}, we obtain only germs in the $\mu$-constant stratum of $f$.
Moreover, this restriction amounts to considering parameter values in the complement of a hypersurface, which is the union of the hypersurfaces where the vertex terms vanish and the discriminant conditions for the individual faces. On the other hand, none of the germs corresponding to parameter values in the hypersurface is in the $\mu$-constant stratum of $f$: Vanishing of a vertex monomial, by Theorem \ref{theorem:milnorNumber}, leads to a larger Milnor number. Assume  the Newton boundary of a germ $f_1 = f_0+\sum_{i=1}^{n} \alpha_ie_i$ becomes degenerate (with no vertex monomial is vanishing), and the germ is right-equivalent to a germ $f_1'$ with non-degenerate Newton boundary and Milnor number $\mu(f_1')=\mu(f)$. Then, by Theorem \ref{prop:newtonBoundary}, we may assume that $\Gamma(f_1')=\Gamma(f_0)$. Hence, we obtain $\mu(f_1)=\mu(f_1')=N(f_1')=N(f_0)=N(f_1)$. In particular $f_1$ has maximal Newton number in its right-equivalence class. Then, as above, 
the second part of Remark \ref{rmk sect 4 1} implies that $f_1$ is non-degenerate, a contradiction.
\bigskip

Finally, we show that our family $f_\alpha = f_0+\sum_{i=1}^{n} \alpha_ie_i$ meets every right-equivalence class in only finitely many points. 
Fix a parameter value $\beta$. Then for any parameter value $\gamma$ with $f_{\gamma} \sim f_{\beta}$, by \cite[Part II, Proposition 2.48(i)]{GLS2018} it follows that in a sufficiently small neighborhood of $\gamma$ in the parameter space, the set of all $\gamma'$ with $f_{\gamma'} \sim f_{\beta}$ is closed. Since the parameter space is paracompact, it admits a locally finite cover of such neighborhoods. Hence, the set of all $\gamma'$ in the parameter space with $f_{\gamma'} \sim f_{\beta}$ is closed. So, if we assume that there are infinitely many such~$\gamma'$, there exists a non-constant one parameter subfamily $f_t = f_0+\sum_{i=1}^{n} \alpha_i(t)e_i$ with $f_t \sim f_\beta$. Then, by \cite[Theorem 9.1.5]{PdJ2000}, it follows that there is an $e_i\in \Jac(f)$, a contradiction.
\\
\end{proof}

\begin{remark}
Theorem \ref{theorem:normalform} also is true in the formal sense.
\end{remark}
\begin{proof}
Any formal power series $f$ with finite Milnor number is (via a formal transformation) right-equivalent to a polynomial (see Remark \ref{rem Artin}). The first part of the proof of Theorem \ref{theorem:normalform} applied to this polynomial shows that $f$ is right-equivalent to a germ in the family $\operatorname{NF}(f)$.  In order to see that this family intersects every right-equivalence class of formal power series only in finitely many points, note that, by the second part of the proof of Theorem \ref{theorem:normalform}, it intersects the corresponding right-equivalence class of convergent power series  in only finitely many points and that all germs in $\operatorname{NF}(f)$ are polynomials, and hence are convergent. The claim then follows by  the Artin approximation theorem.
\end{proof}

We finish the current section by discussing in which sense  the normal form associated  by Theorem \ref{theorem:normalform} to a germ which is equivalent to a germ with non-degenerate Newton boundary is unique, and how it can be used to label $\mu$-constant strata. We first observe:

\begin{remark}
Any  $\mu$-constant stratum containing a germ with a non-degenerate Newton boundary has a normal form as constructed in Theorem \ref{theorem:normalform} with fixed Newton polygon. Any such normal form depends only on the vertices of the Newton polygon $\Gamma$ and the choice of a regular basis. For fixed Newton polygon $\Gamma$, different choices of regular bases lead to normal forms of the same $\mu$-constant stratum.
However, for the same $\mu$-constant stratum there can be normal forms with different non-degenerate Newton boundaries. Hence, like Arnold, we associate a \textbf{type} to each $\mu$-constant stratum and then make a specific choice of a normal form for each type $T$. This choice amounts to fixing a Newton polygon and a regular basis of monomials on and above the Newton polygon. For distinguishing between different types, it is sufficient to know the Newton polygon of the normal form. \end{remark}

To achieve uniqueness of the Newton polygon associated to a fixed type (in order to label types by Newton polygons), the Newton polygon may be chosen according to the following normalization condition, which 
can be satisfied via  a right-equivalence for any germ with non-degenerate Newton boundary. Moreover, the normalization condition is satisfied for all germs in the  normal form produced by Theorem~\ref{theorem:normalform}

  \begin{remark}\label{remark:NB}
  We will refer to a weighted homogeneous non-unit in $\mathbb C[x,y]$ which is linear in one variable as \textbf{linear homogeneous}, otherwise we refer to it as non-linear homogeneous. 

Let $f\in\m^2\subset\mathbb C\{x,y\}$ be a convenient germ with non-degenerate Newton boundary, and let $\Delta$ be a facet of $\Gamma(f)$, and write $w=w(\Delta)$.
If in the factorization of $\jet(f,\Delta)=w\dash\jet(f,d(f))$ in $\mathbb C[x,y]$ would occur any linear homogeneous factor except $x$ or $y$ with exponent larger than one, 
or if it has a non-linear factor of multiplicity larger than one, then $w\dash\jet(f,d(f))$ would have degenerate saturation. Hence, we can factorize $w\dash\jet(f,d(f))$ in $\mathbb C[x,y]$ as $$w\dash\jet(f,d(f))=x^a \cdot y^b 
\cdot g_1\cdots g_n
\cdot \widetilde{g},$$ where $a$,$b$ 
are integers, $g_1,\ldots,g_n$ are linear homogeneous polynomials not associated to $x$ or $y$
, and $\widetilde{g}$ is a product of non-associated irreducible non-linear homogeneous polynomials. If $w(x)=w(y)$, we can change $f$ via a right-equivalence such that for the resulting germ $a\neq 0$ and $b\neq 0$. If $w(x)>w(y)$, we can change $f$ such that for the resulting germ $a=0$ only if $n=0$. If $w(x)<w(y)$, we can change $f$ such that for the resulting germ $b=0$ 
only if $n=0$.    

 \begin{proof}   Note that if $w(x)>w(y)$ and $a=0$ (i.e., $\Delta$ meets the $y$-axis),  then for $n\neq 0$ there is a right-equivalence with $g_1\mapsto x$, $y\mapsto y$. The case $w(x)< w(y)$, $b=0$ can be treated in a similar way. The same applies if $w(x)=w(y)$ for the cases $a=0$, $b\neq 0$ and $a\neq 0$, $b=0$. If $a=b=0$, then by $f\in \m^2$ we have $n\geq 2$, hence there is a right-equivalence with $g_1\mapsto x$, $g_2\mapsto y$.
\end{proof}

\begin{defn}
\label{def normal cond}
Let $f\in\m^2\subset\mathbb C\{x,y\}$ be a convenient germ with non-degenerate Newton boundary, and let $\Delta$ be a facet of $\Gamma(f)$, and write $w=w(\Delta)$. Then $\jet(f,\Delta)$ factorizes in $\mathbb C[x,y]$ as $$\jet(f,\Delta)=x^a \cdot y^b 
\cdot g_1\cdots g_n
\cdot \widetilde{g},$$ where $a$,$b$ 
are integers, $g_1,\ldots,g_n$ are linear homogeneous polynomials not associated to $x$ or $y$, and $\widetilde{g}$ is a product of non-associated irreducible non-linear homogeneous polynomials. We say that $f$ is \textbf{normalized} with respect to the facet $\Delta$, if 
\[%
\begin{tabular}
[c]{ccccc}%
$w(x)=w(y)$ &  &  & $\Longrightarrow$ & $a,b\neq0$\\
$w(x)>w(y)$ & \text{and} & $a=0$ & $\Longrightarrow$ & $n=0$\\
$w(x)<w(y)$ & \text{and} & $b=0$ & $\Longrightarrow$ & $n=0$%
\end{tabular}
\]

\end{defn}

This observation can be used to choose a natural normalization for germs with non-degenerate Newton boundary:
\begin{lemma}\label{lem normal}
There exists a right-equivlance such that $f$ is normalized with respect to all facets of the Newton polygon. \end{lemma}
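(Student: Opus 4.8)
The strategy is to apply the normalization observation in Remark~\ref{remark:NB} facet by facet, but the difficulty is that a right-equivalence correcting one facet might spoil another one already normalized. So the plan is to process the facets in a suitable order and argue that the transformations used do not destroy earlier normalizations. First I would fix $f$ convenient with non-degenerate Newton boundary and enumerate its facets $\Delta_1,\ldots,\Delta_s$ ordered by increasing slope (equivalently, by the ratio $w(x)/w(y)$ of the associated weight), so that $\Delta_1$ is the "steepest" facet meeting the $y$-axis and $\Delta_s$ meets the $x$-axis. The key point to exploit is that the transformations appearing in the proof of Remark~\ref{remark:NB} are of a very restricted shape: either a diagonal scaling $x\mapsto ax$, $y\mapsto by$, or a shear of the form $g_1\mapsto x$ (equivalently $x\mapsto x - cy$ or $y\mapsto y - cx$ after a linear change bringing $g_1$ to a coordinate), which replaces one of the coordinates by a polynomial combination involving only a monomial of strictly larger weight with respect to the weight of the facet being corrected.

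Second, I would make precise which normalizations need to be fixed and in what order. A diagonal scaling preserves the Newton polygon and the factorization type of $\jet(f,\Delta)$ for every facet simultaneously, so scalings are harmless and can be applied last (or absorbed throughout). The only genuinely delicate moves are the shears $g_1\mapsto x$ used when $w(x)>w(y)$ and $a=0$ (facet meets the $y$-axis), or $w(x)<w(y)$ and $b=0$ (facet meets the $x$-axis). I would argue: a shear correcting the facet meeting the $y$-axis has the form $x\mapsto x - c\,y$ (lowest order), i.e. it replaces $x$ by $x$ plus a term of weight strictly larger than $w(x)$ \emph{with respect to $w(\Delta_1)$} but, more importantly, it changes the principal part only on $\Delta_1$ and moves everything else strictly above; symmetrically for the facet on the $x$-axis. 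Since each interior facet $\Delta_j$ with $w_j(x)>w_j(y)$ and $a_j\ne 0$, or $w_j(x)<w_j(y)$ and $b_j\ne 0$, is \emph{already} normalized by Definition~\ref{def normal cond} (the hypothesis "$a=0$" or "$b=0$" fails), these facets impose no condition and need no correction. Thus at most two facets — the one meeting the $y$-axis and the one meeting the $x$-axis — plus possibly facets with $w(x)=w(y)$, actually require a nontrivial transformation.

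Third, I would handle the $w(x)=w(y)$ case (a facet of slope $-1$): there is at most one such facet, and the correction there is, per Remark~\ref{remark:NB}, either a diagonal scaling (harmless) or, when $a=b=0$, a single shear $g_1\mapsto x$, $g_2\mapsto y$, i.e. an invertible linear change of coordinates. A linear coordinate change permutes/rescales the facets' weights only trivially (it preserves $w(x)=w(y)$ facets setwise and can only improve, not worsen, the other normalizations, since for $w(x)\ne w(y)$ facets the conditions are vacuous unless the facet meets an axis, and meeting an axis is preserved). Finally I would assemble the argument: apply the shear (if needed) for the $y$-axis facet $\Delta_1$; this modifies $f$ so that terms off $\Delta_1$ only move to strictly higher $w(\Delta_1)$-degree, in particular the Newton polygon is unchanged and no new linear factors associated to neither $x$ nor $y$ are created on $\Delta_1$; then apply the shear for the $x$-axis facet $\Delta_s$, noting it involves $y\mapsto y - c\,x$ which does not reintroduce an $a=0$ situation on $\Delta_1$ because $\Delta_1$ still meets the $y$-axis (convenience is preserved) and the transformation acts by adding higher-$w(\Delta_s)$-degree terms only; then apply the slope-$-1$ correction if present; and finish with a global diagonal scaling to normalize the remaining vertex coefficients. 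The main obstacle I expect is precisely verifying the non-interference claim — that correcting one axis-facet does not create a degenerate or non-normalized configuration on the other facets — which requires checking that each shear adds terms of strictly larger weighted degree for every facet weight $w(\Delta_j)$, $j\ne$ the facet being corrected, so that the principal part $\jet(f,\Delta_j)$ and hence its factorization is untouched.
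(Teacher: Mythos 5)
Your overall plan matches the paper's: facets not meeting a coordinate axis are automatically normalized (both $a$ and $b$ are nonzero there, so the hypotheses in Definition~\ref{def normal cond} are vacuous), so only the axis-meeting facets need correction, one applies Remark~\ref{remark:NB} to each, and the transformations do not disturb the other facets' jets by convexity of the Newton polygon. However, there is a genuine gap: your claim that after the shear ``the Newton polygon is unchanged'' is false, and this hides the need for the iteration on which the paper's proof rests. For a facet $\Delta$ meeting the $y$-axis with $w(x)>w(y)$, the normalizing transformation has the form $x\mapsto x-c\,y^{n_2}$ with $n_2=w(x)/w(y)>1$ --- it is a weighted shear of filtration $0$ with respect to $w(\Delta)$, not the linear shear $x\mapsto x-cy$ you describe. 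It sends $\jet(f,\Delta)$ to a polynomial divisible by $x$, so the lower vertex of $\Delta$ moves off the $y$-axis. The Newton polygon therefore \emph{does} change: it acquires a new, steeper facet meeting the $y$-axis (or the germ loses convenience), and this new facet may again violate the normalization condition, since its jet typically has a weighted-linear factor not associated to $x$ or $y$.

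The paper handles this by iterating: after each shear the $y$-intercept strictly increases, and either the new axis facet becomes normalized or the intercept eventually exceeds the determinacy, at which point the axis-meeting facet can be truncated away (at the cost of convenience, which is precisely why the lemma does not claim the output is convenient). Without this iteration and its termination argument, your proof only establishes that the facets of the \emph{original} Newton polygon are brought into normal form, not that the resulting germ is normalized with respect to all facets of the \emph{new} Newton polygon, which is what the lemma asserts.
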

\begin{proof} For any facet not meeting the coordinate axes, the normalization condition is empty since both $a$ and $b$ are non-zero. The claim follows then by the following iteration: If there is only one facet meeting a coordinate axis (hence by convenience both axes), we apply Remark \ref{remark:NB}. If there are two facets $\Delta_1$ and $\Delta_2$ meeting the $x$- and $y$-axis, respectively, then by  Remark~\ref{remark:NB} there are right-equivalences $\phi_1$ and $\phi_2$ which normalize the jets of $\Delta_1$ and $\Delta_2$ individually. Applying the composition of $\phi_1$ and $\phi_2$ in any order will normalize the jets of both facets: Without loss of generality, we may assume that $\phi_1(x)=x,\, \phi_1(y)=y+c_1\cdot x^{n_2}$ and $\phi_2(x)=x+c_2\cdot y^{n_2},\, \phi_2(y)=y$ with $c_1,c_2\in \mathbb{C}$. Hence, by convexity of the Newton polygon, $\phi_1$ does not change the jet of any facet except $\Delta_1$, analogously $\phi_2$ does not change the jet of any facet except $\Delta_2$. Truncating at the determinacy and repeating the process either normalizes (after a single iteration) or deletes (after possibly several iterations) facets meeting a coordinate axis. The process, hence, determines in finitely many steps a germ which is normalized with respect to all faces (but may not be convenient).
\\
\end{proof}

Since the definitions and results in the standard literature are typically formulated for convenient germs, it makes sense to relate our observations to the setting of convenient germs. To do so, we define:

\begin{defn}
  A facet of the Newton polygon of a germ is called a \textbf{smooth face} if the saturation of its jet is smooth.
  \end{defn}

\begin{remark}
 A germ $f$ which is normalized with respect to every facet of the Newton polygon may not be convenient. However, adding the terms $x^{d}$ or $y^{d}$ with $d=\mu(f)+2$, if necessary, yields a convenient germ which is right-equivalent to $f$. In this way, we obtain a convenient germ, which is normalized with respect to all facets of the Newton polygon except for smooth facets meeting a coordinate axis. If a germ with non-degenerate Newton boundary satisfies this property, we say that it satisfies the \textbf{normalization condition}.\footnote{Note that a germ satisfying the normalization condition may or may not have smooth facets meeting a coordinate axis, that is, there may exist non-smooth facets meeting a coordinate axis. 
  }

\end{remark}

\begin{remark}
For any germ which is equivalent to a germ with non-degenerate Newton boundary, the germs with non-degenerate Newton boundary in its right-equivalence class which satisfy the normalization condition have a unique Newton polygon (up to permuation of $x$ and $y$).
\end{remark}

This follows directly from the following remark, since characteristic exponents and intersection numbers are invariant under right-equivalence. 
\begin{remark}\label{rmk sec4 b}
If  a germ is right-equivalent to a germ with non-degenerate and normalized Newton polygon $\Gamma$, then the characteristic exponents and intersection numbers of the branches of $f$ uniquely determine $\Gamma$ (up to permutation of the variables). 
\end{remark}
For clarity of the presentation, the proof will be given in the next section in Corollary~\ref{prop:PuiseuxIntersectionNewtonBoundary2}. More generally, combining Remark \ref{rmk sec4 b} with  Remark \ref{rmk sect 4 1}, and Lemma \ref{lem normal}, we obtain:

 \begin{theorem}In a $\mu$-constant stratum which contains a germ with a non-degenerate Newton boundary,  every right-equivalence class contains a normalized germ, and all germs in the $\mu$-constant stratum  satisfying the normalization condition have the same Newton polygon (up to permutation of the variables).
 \end{theorem}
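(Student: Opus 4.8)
The plan is to assemble this final theorem as a direct synthesis of the three ingredients already established: Remark~\ref{rmk sect 4 1} (equivalently Proposition~\ref{prop non-deg} and Corollary~\ref{prop:PuiseuxIntersectionNewtonBoundary}), Lemma~\ref{lem normal}, and Remark~\ref{rmk sec4 b} (equivalently Corollary~\ref{prop:PuiseuxIntersectionNewtonBoundary2}). Let $K$ be a $\mu$-constant stratum containing a germ $h$ with non-degenerate Newton boundary. First I would show that \emph{every} right-equivalence class in $K$ contains a normalized germ. Given an arbitrary germ $g \in K$, since $g$ and $h$ lie in the same $\mu$-constant stratum, the remark following Theorem~\ref{theorem:Le} shows they have the same characteristic exponents and intersection numbers of their branches. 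By the first part of Remark~\ref{rmk sect 4 1}, $g$ is then right-equivalent to a germ $g'$ with non-degenerate Newton boundary. Applying Lemma~\ref{lem normal} to $g'$ produces a further right-equivalent germ which is normalized with respect to all facets of its Newton polygon; if necessary, add $x^{\mu+2}$ or $y^{\mu+2}$ (as in the remark preceding Definition of the normalization condition) to obtain a convenient germ right-equivalent to $g$ which satisfies the normalization condition. This gives the first assertion.

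For the second assertion, let $g_1, g_2 \in K$ be two germs each satisfying the normalization condition (in particular each convenient with non-degenerate Newton boundary). Since $g_1$ and $g_2$ lie in the same $\mu$-constant stratum, the remark after Theorem~\ref{theorem:Le} again gives that the branches of $g_1$ and the branches of $g_2$ have the same characteristic exponents and pairwise intersection numbers. Now invoke Remark~\ref{rmk sec4 b}: a germ which is right-equivalent to a germ with non-degenerate and normalized Newton polygon has its Newton polygon uniquely determined (up to permutation of $x$ and $y$) by the characteristic exponents and intersection numbers of its branches. Applying this to $g_1$ and to $g_2$ and comparing, we conclude $\Gamma(g_1) = \Gamma(g_2)$ up to permutation of the variables. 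Since every germ in $K$ satisfying the normalization condition arises this way, all such germs share a common Newton polygon.

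The one point requiring a little care is the interface between ``normalized with respect to all facets'' (the output of Lemma~\ref{lem normal}, which may fail to be convenient) and ``satisfies the normalization condition'' (which is convenient, but permits smooth facets meeting a coordinate axis). I would handle this exactly as in the remark following the proof of Lemma~\ref{lem normal}: adding the monomial $x^{d}$ or $y^{d}$ with $d = \mu(f)+2$ does not change the right-equivalence class (it lies deep inside the Jacobian ideal by finite determinacy, Theorem~\ref{thm fin det}), does not disturb the jets of the existing facets, and only possibly introduces a new facet which is smooth and meets a coordinate axis --- exactly the kind of facet the normalization condition tolerates. Thus the Newton polygon is unchanged except for this controlled modification, which Remark~\ref{rmk sec4 b} still pins down from the topological data. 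I expect this bookkeeping about smooth facets meeting the axes --- making sure the uniqueness statement in Remark~\ref{rmk sec4 b} genuinely survives the convenient-ization step --- to be the only real obstacle; the rest is a straightforward chaining of cited results.

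\begin{proof}
Let $K$ be the $\mu$-constant stratum and let $h \in K$ be a germ with non-degenerate Newton boundary.

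We first show that every right-equivalence class in $K$ contains a normalized germ. Let $g \in K$. By the remark following Theorem~\ref{theorem:Le}, the characteristic exponents and intersection numbers of the branches are constant on $K$, so $g$ and $h$ have the same characteristic exponents and intersection numbers. By the first part of Remark~\ref{rmk sect 4 1}, $g$ is right-equivalent to a germ $g'$ with non-degenerate Newton boundary. By Lemma~\ref{lem normal}, there is a right-equivalence making $g'$ normalized with respect to all facets of its Newton polygon; adding, if necessary, the monomial $x^{\mu(g')+2}$ or $y^{\mu(g')+2}$ (which by Theorem~\ref{thm fin det} does not change the right-equivalence class and, by convexity, changes no jet of an existing facet while possibly adding only a smooth facet meeting a coordinate axis) yields a convenient germ right-equivalent to $g$ which satisfies the normalization condition and, in particular, is normalized.

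Now let $g_1, g_2 \in K$ be germs satisfying the normalization condition. As above, $g_1$ and $g_2$ have the same characteristic exponents and intersection numbers of their branches. By Remark~\ref{rmk sec4 b}, for a germ right-equivalent to a germ with non-degenerate and normalized Newton polygon, these data determine the Newton polygon uniquely up to permutation of the variables. Applying this to $g_1$ and to $g_2$ yields $\Gamma(g_1) = \Gamma(g_2)$ up to permutation of $x$ and $y$. Hence all germs in the $\mu$-constant stratum satisfying the normalization condition have the same Newton polygon, up to permutation of the variables.
\end{proof}
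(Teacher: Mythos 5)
Your proof is correct and follows exactly the route the paper intends: the theorem is stated as a direct synthesis of Remark~\ref{rmk sect 4 1}, Lemma~\ref{lem normal}, and Remark~\ref{rmk sec4 b} (together with the constancy of topological invariants on the $\mu$-constant stratum). Your version is more explicit about the bookkeeping at the interface between ``normalized with respect to all facets'' and ``satisfies the normalization condition,'' which the paper leaves implicit.
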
 

Throughout the paper we will choose normal forms according to the normalization condition, which means that any germ in the image of the normal form which is in the $\mu$-constant stratum satisfies the normalization condition.   
 We denote the chosen normal form corresponding to the type $T$ by $\NF(T)$. 
   For $b\in \parm(\NF(T)):=\Phi^{-1}(K)$ with $K$ as in Definition~\ref{def nfequ}, we write $\NF(T)(b):=\Phi(b)$ for the corresponding normal form equation.
\end{remark}
 
Definition \ref{def NB} extends then to types:
 \begin{defn}
 Let $T$ be a singularity type of a germ with a non-degenerate Newton boundary. We call
 \begin{eqnarray*}
 \supp(T)&:=&\supp(\NF(T)(b))
 \end{eqnarray*}
where $b\in\parm(\NF(T))$ is generic, the \textbf{support} of $T$. In the same way, we define the \textbf{Newton boundary} $\Gamma(T)$ of the type, and the notations $\Gamma_+(T)$, $d(T)$, and $\supp(T,\Delta)$. If for a given type $T$, $w\dash\jet(\NF(T)(b),j)$ is independent of $b\in\parm(\NF(T))$, we denote it by $w\dash\jet(T,j)$. 
 \end{defn}

\section{The correspondence between Characteristic Exponents, Intersection Numbers and normalized non-degenerate Newton boundaries}\label{sec char exp}
In this section, we will prove the results stated in  Remarks \ref{rmk sect 4 1} and  \ref{rmk sec4 b}. So we will show that, if two germs have the same characteristic exponents and intersection numbers, and one of them has a non-degenerate Newton boundary, then also the other one is right-equivalent to a germ with non-degenerate Newton boundary and same Newton polygon. Moreover, we will prove that if a germ $f$ is equivalent to a germ with a non-degenerate Newton boundary, this Newton boundary is uniquely determined by the characteristic exponents and intersection numbers of the branches of $f$, provided we require the normalization condition as specified in Definition \ref{def normal cond}. The results will be given in Corollaries \ref{prop:PuiseuxIntersectionNewtonBoundary2} and \ref{prop:PuiseuxIntersectionNewtonBoundary}.\smallskip

We first recall some facts on Puiseux expansions, which play a key role in the proofs.
\begin{theorem} \cite[Theorem 5.1.1]{PdJ2000} \label{theorem:parametrization}
Let $0\neq f\in\mathbb C \{x,y\}$ be irreducible. Then there exist $x(t), y(t)\in\mathbb C\{t\}$ such that
\begin{itemize}[leftmargin=8mm]
\item[(i)] $f(x(t),y(t))=0$,
\item[(ii)] $\dim_{\mathbb C}(\mathbb C\{t\}/\mathbb C\{x(t),y(t)\})<\infty$.
\end{itemize}
The tuple $(x(t),y(t))$ is called a \textbf{parametrization} of $f$.
\end{theorem}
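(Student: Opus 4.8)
\textbf{Proof strategy for Theorem~\ref{theorem:parametrization} (Puiseux parametrization).}
The statement is the classical Newton--Puiseux theorem, so the plan is to follow the standard constructive argument via the Newton polygon. First I would reduce to the case where $f$ is a Weierstrass polynomial: since $f\in\mathbb{C}\{x,y\}$ is irreducible, after possibly interchanging $x$ and $y$ we may assume $f(0,y)\not\equiv 0$, and by the Weierstrass Preparation Theorem we write $f=u\cdot p$ with $u$ a unit and $p=y^d+a_{d-1}(x)y^{d-1}+\cdots+a_0(x)$ a distinguished polynomial in $y$ with $a_i(0)=0$. A parametrization of $p$ is a parametrization of $f$, so it suffices to treat $p$.

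The core of the argument is the Newton polygon algorithm. I would look at the lowest-order term in $x$ appearing with each power of $y$, form the Newton polygon of $p$ in the $(i,j)$-plane, pick the compact edge(s), and use the associated weighted-initial form to solve for the leading exponent and coefficient of a candidate series solution $y(x)=c_1 x^{\gamma_1}+\cdots$ with $\gamma_1\in\mathbb{Q}_{>0}$. Substituting $y\mapsto c_1 x^{\gamma_1}+y_1$ and repeating produces, step by step, a formal Puiseux series $y=\sum_{k} c_k x^{m_k/N}$ for a common denominator $N$; setting $x=t^N$ and $y(t)=\sum_k c_k t^{m_k}$ gives $x(t),y(t)\in\mathbb{C}\{t\}$ with $f(x(t),y(t))=0$ at the formal level. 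Convergence of the resulting series is the point where one invokes the standard majorant/implicit-function estimate (or cites it from \cite{PdJ2000}); I would not grind through these estimates but simply note they are classical. This establishes~(i).

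For property~(ii), finiteness of $\dim_{\mathbb{C}}(\mathbb{C}\{t\}/\mathbb{C}\{x(t),y(t)\})$, I would argue as follows: irreducibility of $f$ (equivalently of $p$) forces the Galois group permuting the $d$ Puiseux roots to act transitively, so $N=d=\mathrm{mult}_y(p)$ is exactly the multiplicity, the parametrization is primitive, and $\mathbb{C}\{t\}$ is a finite module over $\mathbb{C}\{x(t),y(t)\}$ because $t$ is integral over $\mathbb{C}\{x(t)\}=\mathbb{C}\{t^N\}$ — hence the quotient has finite dimension (this dimension is in fact the delta-invariant of the branch). Concretely, $\mathbb{C}\{x(t),y(t)\}$ is the local ring of the branch, $\mathbb{C}\{t\}$ its normalization, and the conductor being nonzero gives the finiteness.

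The main obstacle, and the only genuinely non-formal part, is the convergence of the Puiseux series: the Newton polygon procedure is purely combinatorial/algebraic and terminates at the level of formal power series without difficulty, but showing the coefficients $c_k$ do not grow too fast requires a majorant-series argument. Since the excerpt permits citing \cite[Theorem 5.1.1]{PdJ2000} and related facts, I would present the Newton polygon construction in detail and defer the convergence estimate to the reference, treating it as the known technical heart of the theorem.
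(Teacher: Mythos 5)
The paper does not give its own proof of this statement; it is cited directly as Theorem~5.1.1 of \cite{PdJ2000}, and your sketch follows the same standard Newton--Puiseux route used there (Weierstrass preparation, Newton polygon iteration, majorant-series convergence, finiteness of the quotient via normalization and the nonzero conductor). Your outline is correct and consistent with the cited reference; the one step worth making explicit is that finiteness in~(ii) uses not just that $t$ is integral over $\mathbb{C}\{t^N\}$ but also that $\mathbb{C}\{x(t),y(t)\}$ and $\mathbb{C}\{t\}$ share the same fraction field (primitivity of the parametrization, which you correctly derive from irreducibility), so that the finitely generated quotient module is torsion and hence of finite $\mathbb{C}$-dimension.
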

Parametrizations can be computed using the Newton-Puiseux algorithm.
 A germ $f\in\mathbb C\{x,y\}$ is called $y$-general, if $f$ is not divisible by $x$, and $f$ is called $y$-general of order $b$, if $f(0,y)$ has a zero of order $b$. The field of Puiseux series over $\mathbb C$ is
$\mathbb C\{\{x\}\}=\bigcup_{k=1}^{\infty}\mathbb C((x^{\frac{1}{k}}))$. The valuation ring $\mathcal{P}(x)=\mathbb C\{\{ x\}\}_{v\ge 0}=\bigcup_{k=1}^{\infty}\mathbb C \{x^{\frac{1}{k}}\}$ with respect to the canonical valuation $\nu$ consists of all Puiseux series with non-negative exponents only. By the Newton-Puiseux theorem (see, for example, \cite[Theorem I.3.3]{GLS2007}), any $y$-general germ $f\in \mathbb C\{x\}[y]$, can be factorized as $f=c\cdot \prod_{i=1}^d(y-\phi_i(x))$ with $c\in \mathbb C$ and the Puiseux expansions $\phi_i\in \mathcal{P}(x)$. 
By passing to an irreducible factor of $f$ and translating the singularity to the origin, it is enough to consider irreducible Weierstrass polynomials. In this case, the structure of the Puiseux expansions has a straight-forward description.

\begin{theorem} \label{irreducible Puisuex}\cite[Theorem 5.1.7]{PdJ2000}
Let $f
\in\mathbb C\{x\}[y]$ be an irreducible Weierstra\ss\ polynomial of $y$-degree $n$.
Let $\epsilon$ be a primitive $n$-th root of unity. 
 Then there exists a power series $\eta\in t \cdot\mathbb C\{t\}$ such that $f$ factorizes in $\mathbb C\{x^{\frac{1}{n}}\}[y]$ as
\[
f=\prod_{i=1}^n(y-\eta(\epsilon^ix^\frac{1}{n})).
\]
In particular, all Puiseux expansions of $f$ are contained in $\mathbb C\{x^\frac{1}{n}\}$.
\end{theorem}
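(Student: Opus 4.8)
The plan is to derive this from the Newton--Puiseux factorization recalled just above together with a Galois-descent argument along the cyclic covering $x=s^{N}$. \emph{First}, I would apply the Newton--Puiseux factorization to the $y$-general (indeed Weierstra\ss) polynomial $f$, obtaining $f=\prod_{i=1}^{n}(y-\phi_i)$ with $\phi_i\in\mathcal{P}(x)$; since $f$ is a Weierstra\ss\ polynomial of $y$-degree $n$ we have $f(0,y)=y^{n}$, so each $\phi_i$ has strictly positive valuation, hence $\phi_i\in x^{1/N}\mathbb{C}\{x^{1/N}\}$ for a suitable common denominator $N$. Moreover, since $f$ is irreducible over $\mathbb{C}\{x\}$ and $\operatorname{char}\mathbb{C}=0$, it is separable, so the $\phi_i$ are pairwise distinct.

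\emph{Second}, I would pass to $s:=x^{1/N}$, identifying $\mathbb{C}\{x^{1/N}\}$ with $\mathbb{C}\{s\}$, and let the cyclic group $G=\langle\sigma\rangle\cong\mathbb{Z}/N\mathbb{Z}$ act by $\sigma(s)=\zeta s$, where $\zeta$ is a primitive $N$-th root of unity; its ring of invariants is exactly $\mathbb{C}\{s\}^{G}=\mathbb{C}\{s^{N}\}=\mathbb{C}\{x\}$. Since the coefficients of $f$ lie in $\mathbb{C}\{x\}$, applying $\sigma$ to the linear factorization shows that $\sigma$ permutes the set $\{\phi_1,\dots,\phi_n\}$. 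The crucial point is that this $G$-action is transitive: if $O$ were a proper nonempty $G$-orbit, then $\prod_{\phi\in O}(y-\phi)$ would be a monic $G$-invariant polynomial, hence would lie in $\mathbb{C}\{x\}[y]$; being monic over the factorial ring $\mathbb{C}\{x\}$ and dividing $f$ over $\operatorname{Frac}(\mathbb{C}\{s\})$, hence over $\operatorname{Frac}(\mathbb{C}\{x\})$, it would divide $f$ already in $\mathbb{C}\{x\}[y]$, contradicting the irreducibility of the Weierstra\ss\ polynomial $f$.

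\emph{Third}, I would let $m$ be the minimal integer with $\phi_1\in\mathbb{C}\{x^{1/m}\}$, so that $m\mid N$; a short computation with the exponents occurring in $\phi_1$ shows that $\sigma^{j}$ fixes $\phi_1$ precisely when $m\mid j$, so the stabilizer of $\phi_1$ in $G$ has order $N/m$, and by the orbit--stabilizer formula together with transitivity the set $\{\phi_1,\dots,\phi_n\}$ is a single orbit of size $m$, whence $m=n$ and in particular $n\mid N$. Thus $\phi_1\in\mathbb{C}\{x^{1/n}\}$, say $\phi_1=\eta(x^{1/n})$ with $\eta\in t\,\mathbb{C}\{t\}$ (and $\eta(0)=0$ since $\phi_1(0)=0$). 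Setting $\epsilon:=\zeta^{N/n}$, which is a primitive $n$-th root of unity, and using $x^{1/n}=s^{N/n}$, one gets $\sigma^{j}(\phi_1)=\eta(\epsilon^{j}x^{1/n})$; as $j$ runs through a complete set of residues modulo $n$ these are the $n$ distinct roots, so $f=\prod_{i=1}^{n}\bigl(y-\eta(\epsilon^{i}x^{1/n})\bigr)$ and every Puiseux expansion of $f$ lies in $\mathbb{C}\{x^{1/n}\}$. I expect the main obstacle to be the transitivity step — in particular the descent of divisibility from $\operatorname{Frac}(\mathbb{C}\{s\})[y]$ down to $\mathbb{C}\{x\}[y]$ via Gauss's lemma — together with the orbit--stabilizer bookkeeping needed to pin the ramification index of $\phi_1$ to be exactly $n$ rather than merely at most $n$.
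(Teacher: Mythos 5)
Your proof is correct. The paper states this result as a citation to \cite[Theorem 5.1.7]{PdJ2000} and does not supply its own argument, so there is no internal proof to compare against; your Galois-descent argument along the cyclic covering $x=s^{N}$ — in particular the transitivity step (monic $G$-invariant orbit products descend to $\mathbb{C}\{x\}[y]$ and divide $f$ there, contradicting irreducibility) together with the orbit--stabilizer count identifying the ramification index $m$ of $\phi_1$ with $n$ — is the standard route and is carried out correctly, including the verification that $\eta(0)=0$ from the Weierstra\ss\ hypothesis.
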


A Puiseux expansion of a $y$-general germ can be computed using the Newton-Puiseux algorithm, which we recall for further reference as Algorithm \ref{alg:Puiseux}. Note that, by branching into all possible choices of facets $\Delta$ in the algorithm, one can obtain all Puiseux expansions of $f$.
\begin{algorithm}[h]
\caption{Newton-Puiseux algorithm}%
\label{alg:Puiseux}
\begin{spacing}{1.05}
\begin{algorithmic}[1]

\Require{A $y$-general germ $f\in\langle x,y \rangle \subset \mathbb C \{x\}[y]$ and a bound $l\geq 0$.}

\Ensure{A Puiseux expansion $\psi$ of $f$ up to order $l$}

\State $i:=0$, $f^{0}:=f$, $x_0:=x$, $y_0:=y$, $K_{(0)}=\mathbb C ((x_0))$  
\While{true}
\State Let $\Delta$ be a facet of $\Gamma(f^{(i)})$ with slope $-\frac{p_i}{q_i}$ in lowest terms \label{line:face}
\State $w:=(p_i,q_i)$
\State Let $a_i$ be a root of the univariate polynomial $w\dash\jet(f^{(i)},d_w)(1,y_{i+1})\in\mathbb C[y_{i+1}]$
\State \label{line:fieldExtension}With new variables $x_{i+1}$, $y_{i+1}$ and $s_{i+1}$, set
\begin{eqnarray*}
f^{(i+1)}&:=&\frac{1}{x_{i+1}^{d_w}}f^{(i)}(x_{i+1}^{p_i}, x_{i+1}^{q_i}(a_i+y_{i+1}))\in
\mathbb C \{x_{i+1}\}[y_{i+1}]
\\
s_i&:=&x_i^{\frac{q_i}{p_i}}(a_i+s^{(i+1)})\in K_{(i+1)}[s^{i+1}],
\end{eqnarray*}
\hskip\algorithmicindent where $K_{(i+1)}=K_{(i)}[x_{i+1}]/(x_{i+1}^{p_i}-x_i)$
\If{$f^{(i+1)}\in\mathbb C[y_{i+1}]$
or $\frac{q_0}{p_0}+\frac{q_1}{p_0p_1}+\ldots+\frac{q_i}{p_0\cdot\ldots\cdot p_i}\ge l$}
\State Replace successively $s^{(j+1)}$ in $s^{(j)}$ for $j=0,\ldots,i$ and obtain
\begin{eqnarray*}
s^{(0)}&:=&x^{\frac{q_0}{p_0}}(a_0+x_1^{\frac{q_1}{p_1}}(a_1+\ldots+x_i^{\frac{q_i}{p_i}}(a_i+s^{(i+1)})))\\
&=&x^{\frac{q_0}{p_0}}(a_0+x^{\frac{q_1}{p_0p_1}}(a_1+\ldots+x^{\frac{q_i}{p_0\cdot\ldots\cdot p_i}}(a_i+s^{(i+1)})))
\end{eqnarray*}
\hskip\algorithmicindent\hskip\algorithmicindent  and set $s^{(i+1)}=0$ in the expression.
\Return $s^{(0)}$
\Else
\State $i:=i+1$

\EndIf
\EndWhile
\end{algorithmic}
\end{spacing}
\end{algorithm}

\begin{remark}
\label{rmk height2}
\begin{enumerate}[leftmargin=7mm]
\item Starting out with the field 
$K_{(0)}= \mathbb C((x))$, this field is enlarged in the $i$-th iteration of the algorithm to 
$K_{(i+1)}=K_{(i)}(x^\frac{1}{p_0\cdot\ldots\cdot p_i})$. 

\item \label{item n} Suppose $f$ is $y$-general and $n$ is minimal such that all Puiseux expansions of $f$ are in $\mathbb C (x^{1/n})$. Setting $x(t)=t^n$ and $y(t)=\psi(t^n)$ with $\psi$ as returned by Algorithm \ref{alg:Puiseux}, we obtain a parametrization of $f$ as in Theorem \ref{theorem:parametrization} developed up to order $l$.

\item \label{lemma:height} If $f$ is irreducible and $y$-general of order $b$, then  
$n=b$ \cite[Proposition~I.3.4]{GLS2007}. 
\end{enumerate}
\end{remark}

\begin{defn} \cite[Definition 5.2.14]{PdJ2000} \label{def:characteristicExponents}
For $\gamma\in \mathcal{P}(x)$, let $n\in\mathbb N$ be minimal with $\gamma\in\mathbb C\{x^\frac{1}{n}\}$, and write $\gamma=\sum_{j\ge m}b_jx^\frac{j}{n}$, with coefficients $b_j\in\mathbb C$, $b_m\neq 0$. We define
\begin{eqnarray*}
k_0&:=&\min\{n,m\},\\
k_1&:=&\left\{
\begin{array}{lll}
\min\{j\ |\ b_j\neq 0,\ \gcd\{j,k_0\}<k_0\}&\quad&\text{if }n\le m,\\
\min\{j\ |\ b_j\neq 0,\ \gcd\{j,k_0\}<k_0\}+n-m&\quad&\text{if }n> m \text{ and }m\mid n,\\
n&\quad&\text{if }n> m \text{ and }m\nmid n,
\end{array}
\right.\\
&&\hspace{-1,5cm}\text{and for }\nu\ge2,\\
k_\nu&:=&\left\{
\begin{array}{lll}
\min\{j\ |\ b_j\neq 0,\ \gcd\{j,k_0,\ldots,k_{\nu-1}\}< \gcd\{k_0,\ldots,k_{\nu-1}\}&\quad&\text{if }n\le m,\\
\min\{j\ |\ b_j\neq 0,\ \gcd\{j,k_0\}<k_0\}+n-m&\quad&\text{if }n> m \text{ and }m\mid n,\\
n&\quad&\text{if }n> m \text{ and }m\nmid n,
\end{array}
\right.
\end{eqnarray*}
The number $k_\nu$ is called the $\nu$-th \textbf{characteristic exponent} of $\gamma$. Note that, by construction, there is a minimal $g$ such that $\{k_\nu \mid \nu \in \mathbb{N}_0\}=\{k_0,\ldots,k_g\}$. Also note that in all cases $k_0<k_1<\ldots<k_g$ and $\gcd(k_0,\ldots,k_g)=1$.

If there is a maximal $m_0\geq 0$ with $\sum_{j =m}^{m_0}b_jx^\frac{j}{n}\in \mathbb C\{x\}$, then we define the \textbf{rational part} of~$\gamma$  as the polynomial $\gamma^{rat}=\sum_{j =m}^{m_0}b_jx^\frac{j}{n}$, otherwise we set $\gamma^{rat}=\gamma$.
\end{defn}

\begin{remark}\label{remark:Puiseux}
\begin{enumerate}[leftmargin=7mm]
\item\label{rmk slope} The above definition can also be formulated in terms of a parametrization satisfying conditions $(i)$ and $(ii)$ in Theorem \ref{theorem:parametrization} (see \cite{PdJ2000}). After applying the $\mathbb C$-algebra automorphism $x\mapsto y$, $y\mapsto x$, if necessary, we may assume that $n\leq m$. In this case, $n$ is the first characteristic exponent of $\gamma$.
Considering higher characteristic exponents, $j\in \mathbb{N}$ is a characteristic exponent if and only if for some $r\in\mathbb N$ we have $x^\frac{m}{n}+x^\frac{m+1}{n}+\ldots+x^\frac{j-1}{n}\in\mathbb C [x^\frac{1}{r}]$ and $x^\frac{m}{n}+x^\frac{m+1}{n}+\ldots+x^\frac{j}{n}\not\in\mathbb C[x^\frac{1}{r}].$ That is, there is an $i$ in Algorithm \ref{alg:Puiseux} such that $\frac{p_i}{q_i}=\frac{j}{n}$ and $K_{(i+1)}$ is a proper extension of $K_{(i)}$.
\item By Theorem \ref{irreducible Puisuex}, all Puisuex expansions of an irreducible Weierstrass polynomial have the same characteristic exponents.
\item \label{rmk char exp splitting}
Suppose $f$ is an irreducible Weierstra\ss polynomial, and $\phi_1,\ldots,\phi_d$ are its Puiseux expansions in terms of $y$, and $G$ is the Galois group of the splitting extentsion $\mathbb{C}(x)\subset \mathbb{C}(x)[\phi_1,\ldots,\phi_d]$.
Then 
the characteristic exponents of $f$ determine the length of the orbit 
under~$G$  of any jet of a Puiseux expansion.

\end{enumerate}
\end{remark}

\begin{defn}
Let $f,g\in \mathbb C\{x,y\}$ be irreducible germs with no common component. Then the \textbf{intersection number} of $f$ and $g$ is
$$i(f,g):=\dim(\mathbb C\{x,y\}/\langle f,g \rangle).$$
\end{defn}

\begin{lemma} \cite[Lemma 5.1.5]{PdJ2000} 
Let $f,g\in\m\subset \mathbb C\{x,y\}$ with $g$ irreducible. If $t\mapsto (x(t),y(t))$ is a parametrization of $g$, then 
\begin{eqnarray*}
i(f,g)&=&\ord_tf((x(t),y(t))\\
&=&\sup\{m\in\mathbb N\mid t^m \text{ divides }f((x(t)),(y(t)))\}.
\end{eqnarray*} 
\end{lemma}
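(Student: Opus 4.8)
The statement to be proved is that, for $f,g\in\m\subset\mathbb C\{x,y\}$ with $g$ irreducible and $(x(t),y(t))$ a parametrization of $g$, one has $i(f,g)=\dim_{\mathbb C}(\mathbb C\{x,y\}/\langle f,g\rangle)=\ord_t f(x(t),y(t))$. The plan is to exploit the parametrization to get an explicit description of the quotient ring. First I would recall from Theorem~\ref{theorem:parametrization} that the map $\rho:\mathbb C\{x,y\}\to\mathbb C\{t\}$, $h\mapsto h(x(t),y(t))$, is a ring homomorphism whose kernel is exactly the principal ideal $\langle g\rangle$: indeed $g$ lies in the kernel by (i), the kernel is a prime ideal of height $1$ since $\mathbb C\{t\}$ is a domain and $\rho$ is not injective, and $\langle g\rangle$ is already prime of height $1$ because $g$ is irreducible, so they coincide. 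Thus $\rho$ induces an injection $\overline\rho:\mathbb C\{x,y\}/\langle g\rangle\hookrightarrow\mathbb C\{t\}$, and condition (ii) says the image $R:=\mathbb C\{x(t),y(t)\}$ has finite colength in $\mathbb C\{t\}$.

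Next I would identify the quotient. Under $\overline\rho$ the ideal $\langle f,g\rangle/\langle g\rangle$ maps isomorphically onto the $R$-ideal generated by $\rho(f)=f(x(t),y(t))$, so
\[
\mathbb C\{x,y\}/\langle f,g\rangle \;\cong\; R/\,(\rho(f))R.
\]
Write $m:=\ord_t \rho(f)$; then $\rho(f)=t^m\cdot u(t)$ with $u(0)\neq 0$, so $u$ is a unit in $\mathbb C\{t\}$ and $(\rho(f))R = (t^m u)R = t^m R$ (the unit $u$ lies in $\mathbb C\{t\}$, not necessarily in $R$, so this needs a small remark: $(t^m u)R$ and $t^m R$ have the same image under further reduction — more carefully, one compares colengths, see below). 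So it remains to show $\dim_{\mathbb C} R/t^m R = m$. This is the heart of the matter and the step I expect to be the main obstacle, since $R$ is generally not a PID and $t^m R$ is not literally $(\rho(f))$.

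To handle that last step cleanly I would use the filtration by powers of $t$ and semicontinuity of colength. Consider the chain $\mathbb C\{t\}\supset R\supset tR\supset t^2R\supset\cdots$ and the chain $\mathbb C\{t\}\supset R\supset \mathfrak m_R\supset\cdots$; since $R$ has finite colength $\delta:=\dim_{\mathbb C}\mathbb C\{t\}/R$ in $\mathbb C\{t\}$, for $m$ large one has $t^m\mathbb C\{t\}\subset R$, hence $t^mR=t^m\mathbb C\{t\}$ for such $m$, giving $\dim_{\mathbb C}R/t^mR = \dim_{\mathbb C}R/t^m\mathbb C\{t\} = \dim_{\mathbb C}\mathbb C\{t\}/t^m\mathbb C\{t\} - \delta = m-\delta$; wait — this shows the naive guess is off by $\delta$, which signals that one must instead track $\dim_{\mathbb C}\mathbb C\{x,y\}/\langle f,g\rangle$ directly rather than $\dim R/t^mR$. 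The correct bookkeeping is: $\mathbb C\{x,y\}/\langle f,g\rangle\cong R/\rho(f)R$, and since multiplication by the unit $u$ is an automorphism of $\mathbb C\{t\}$ carrying $R$ to $R$ only up to finite colength, one instead argues $\dim_{\mathbb C}R/\rho(f)R = \dim_{\mathbb C}\mathbb C\{t\}/\rho(f)\mathbb C\{t\} - \dim_{\mathbb C}\mathbb C\{t\}/R + \dim_{\mathbb C}\rho(f)\mathbb C\{t\}/\rho(f)R$, and since multiplication by $\rho(f)$ is injective on $\mathbb C\{t\}$ it identifies $\mathbb C\{t\}/R$ with $\rho(f)\mathbb C\{t\}/\rho(f)R$, so the last two terms cancel and we get $\dim_{\mathbb C}R/\rho(f)R=\dim_{\mathbb C}\mathbb C\{t\}/\rho(f)\mathbb C\{t\}=\ord_t\rho(f)=m$, as desired. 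The one genuinely delicate point, which I would write out carefully, is the additivity/cancellation of these $\mathbb C$-dimensions along the short exact sequences $0\to R/\rho(f)R\to\mathbb C\{t\}/\rho(f)R\to\mathbb C\{t\}/R\to 0$ and $0\to\rho(f)\mathbb C\{t\}/\rho(f)R\to\mathbb C\{t\}/\rho(f)R\to\mathbb C\{t\}/\rho(f)\mathbb C\{t\}\to 0$, all of whose terms are finite-dimensional because $\delta<\infty$ and $m<\infty$. The final equality $\ord_t\rho(f)=\sup\{m\in\mathbb N\mid t^m \text{ divides } f(x(t),y(t))\}$ is immediate from the definition of order in $\mathbb C\{t\}$.
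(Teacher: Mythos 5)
The paper itself does not prove this lemma; it simply cites it as \cite[Lemma 5.1.5]{PdJ2000}, so there is no internal proof to compare against. Your argument is correct and is the standard normalization argument that the cited source uses: identify $\mathbb C\{x,y\}/\langle g\rangle$ with the subring $R=\mathbb C\{x(t),y(t)\}\subset\mathbb C\{t\}$, reduce the problem to computing the colength of $\rho(f)R$ in $R$, and then cancel the $\delta$-contribution via the two exact sequences together with the isomorphism $\mathbb C\{t\}/R\cong\rho(f)\mathbb C\{t\}/\rho(f)R$ induced by multiplication by $\rho(f)$. That cancellation is exactly the right device, and you correctly notice and repair the tempting but wrong simplification $(\rho(f))R=t^mR$.

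One small point to tighten: your justification that $\ker\rho=\langle g\rangle$ is a bit abrupt. You assert the kernel is prime of height $1$ "since $\mathbb C\{t\}$ is a domain and $\rho$ is not injective," but nonzero primes in the $2$-dimensional ring $\mathbb C\{x,y\}$ need not have height $1$. What you need is that $\ker\rho$ is a nonzero prime containing the height-$1$ prime $\langle g\rangle$, and that it is not the maximal ideal $\langle x,y\rangle$ (otherwise $R=\mathbb C$, contradicting the finiteness in (ii), which forces $R$ to have Krull dimension $1$); hence $\ker\rho=\langle g\rangle$. Also state explicitly that in the case $f\in\langle g\rangle$ both sides are $+\infty$, and from then on assume $\rho(f)\neq 0$ so that all the $\mathbb C$-dimensions in your two short exact sequences are finite. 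With those remarks spelled out the proof is complete.
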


\begin{theorem} \cite[Theorem 5.1.17]{PdJ2000} \label{theorem:faces}
Let $f\in\mathbb C\{x,y\}$ be convenient, let $\Delta_1,\ldots,\Delta_r$ be the facets of $\Gamma(f)$, and $d_i$ be the slope of $\Delta_i$. Then $f=f_1\cdot\ldots\cdot f_r$, where $f_i$ is convenient and $\Gamma(f_i)$ has only one facet of slope $d_i$ for all $i=1,\ldots,r$. 
\end{theorem}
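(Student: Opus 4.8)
The plan is to reduce to a Weierstrass polynomial, factor it into its Newton--Puiseux linear factors, and then regroup those factors according to the $x$-order of the Puiseux roots; the geometry of $\Gamma(f)$ will be recovered from these orders via Newton's theorem on the Newton polygon.

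First I would exploit convenience. Since $\Gamma(f)$ meets both coordinate axes, $f$ is divisible by neither $x$ nor $y$; in particular $f$ is $y$-general of order $b$, where $(0,b)$ is the $y$-intercept of $\Gamma(f)$. By the Weierstrass preparation theorem write $f=u\cdot\widetilde f$ with $u\in\mathbb C\{x,y\}$ a unit and $\widetilde f\in\mathbb C\{x\}[y]$ a Weierstrass polynomial, monic of degree $b$ in $y$. Multiplication by a unit does not change the Newton boundary (a unit has nonzero constant term, so every vertex monomial of $\widetilde f$ survives with a nonzero coefficient in $u\widetilde f$, while no new monomial can lie on or below $\Gamma(\widetilde f)$), and it preserves convenience; hence it suffices to prove the statement for $\widetilde f$ and at the end absorb $u$ into one of the factors. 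Next, factor $\widetilde f=h_1\cdots h_N$ into irreducible Weierstrass polynomials in $\mathbb C\{x\}[y]$. By Theorem~\ref{irreducible Puisuex} (and Remark~\ref{remark:Puiseux}(2)), the Puiseux expansions of each $h_\ell$ all lie in $\mathbb C\{x^{1/\deg_y h_\ell}\}$ and are pairwise related by $x^{1/n}\mapsto\epsilon x^{1/n}$; in particular they all have the same $x$-order $\nu(h_\ell)\in\mathbb Q$, and $\nu(h_\ell)>0$ because $\widetilde f(0,y)=y^b$ forces every root to vanish at $x=0$, while $\nu(h_\ell)<\infty$ because $y\nmid f$ forces every root to be nonzero.

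Now I would group the irreducible factors by this common order: let $\nu_1<\dots<\nu_s$ be the distinct values of $\nu(h_\ell)$ and let $f_i$ be the product of those $h_\ell$ with $\nu(h_\ell)=\nu_i$, so $\widetilde f=f_1\cdots f_s$ in $\mathbb C\{x\}[y]$, with $f_i$ a Weierstrass polynomial of some degree $b_i\ge 1$ in $y$ (no descent issue arises, since each $f_i$ is already a product of elements of $\mathbb C\{x\}[y]$). To read off the Newton polygons I invoke Newton's theorem on the Newton polygon of a polynomial over a valued field: for $g\in\mathbb C\{x\}[y]$, plotting $\bigl(\ord_x(\text{coefficient of }y^j),\,j\bigr)$, the slopes of the lower boundary are exactly the $x$-orders of the Puiseux roots of $g$, the horizontal length of the slope-$(-1/\nu)$ segment equalling the number of roots of order $\nu$. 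Applied to $f_i$, all of whose roots have order $\nu_i$, this shows $\Gamma(f_i)$ is the single segment from $(0,b_i)$ to $(b_i\nu_i,0)$; thus $f_i$ is convenient and $\Gamma(f_i)$ has exactly one facet, of slope $-1/\nu_i$. Applied to $\widetilde f$, it shows the facets of $\Gamma(f)=\Gamma(\widetilde f)$ are precisely these $s$ segments concatenated in order of increasing slope, so $s=r$ and the facet $\Delta_i$ has slope $-1/\nu_i=d_i$. Replacing $f_1$ by $u\,f_1$ (which changes neither its Newton boundary nor its convenience) gives the asserted factorization $f=f_1\cdots f_r$.

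The main obstacle is the Newton-polygon step: one must prove carefully, in the presence of the fractional exponents produced by Puiseux series, that the $x$-orders of the roots are exactly the slopes of the Newton polygon---so in particular that each $\Gamma(f_i)$ is a \emph{single} segment---and that the Newton polygon of the product $f_1\cdots f_s$ is the concatenation of the segments of the factors. (An alternative route avoiding Puiseux series would be an induction on $r$, splitting off the facet nearest the $y$-axis by a Hensel-type lifting of the factorization of the quasihomogeneous piece $\jet(f,\Delta_1)$ in the associated graded ring; but the order/slope dictionary above, already available in the Puiseux chapter being used here, is the most direct.)
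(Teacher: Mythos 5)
The paper does not give its own proof of this statement: it is quoted verbatim from de~Jong--Pfister (Theorem~5.1.17) and used as a black box. Your argument is nevertheless essentially the standard textbook proof of that result, and it is correct in its overall structure: Weierstrass preparation (after observing that convenience makes $f$ $y$-general and that units do not alter the Newton boundary), factorization into irreducible Weierstrass polynomials, grouping the irreducible factors by the common $x$-order of their Puiseux roots, and reading off the Newton polygon of each group and of the product via Newton's theorem on the Newton polygon over the valued field $\mathbb{C}\{\{x\}\}$.

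Two small cautions in the presentation of the key step. First, with your coordinate choice (plotting $(\ord_x(\text{coeff of }y^j),\,j)$, so the order axis is horizontal), the slopes of the lower boundary are $-1/\nu$ for a root of $x$-order $\nu$, not $\nu$ itself, and it is the \emph{vertical} length (drop in $j$) of each edge, not the horizontal one, that counts the number of roots of that order; as stated, the dictionary is flipped. This does not affect your conclusion, since what you actually use -- that $\Gamma(f_i)$ is a single segment from $(0,b_i)$ to $(b_i\nu_i,0)$, hence of slope $-1/\nu_i$ and convenient, and that the segments concatenate in increasing slope to give $\Gamma(\widetilde f)=\Gamma(f)$ -- is correct once the convention is fixed. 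Second, it is worth saying explicitly why the endpoint $b_i\nu_i$ is an integer: each irreducible $h_\ell$ contributing to $f_i$ has constant term of $x$-order $(\deg_y h_\ell)\cdot\nu_i\in\mathbb{Z}$ because $h_\ell\in\mathbb{C}\{x\}[y]$, and these integers sum to $b_i\nu_i$. With those points tightened, the proof is complete and matches the approach in the cited textbook.
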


\begin{lemma}\label{lem:PuiseuxIntersectionNewtonBoundary}
Let $f\in\m\subset\mathbb C\{x,y\}$ be a convenient germ with a  non-degenerate Newton boundary. Let $\Delta_1,\ldots,\Delta_r$ be the facets of $\Gamma(f)$, let $d_i$ be the slope of $\Delta_i$, and let $f=f_1\cdot\ldots\cdot f_r$ be the factorization of $f$ as in Theorem \ref{theorem:faces} such that $\Gamma
(f_i)$ is convenient with only one facet of slope $d_i$, which we denote by $\overline{\Delta}_i$. Then the following holds true:

\begin{enumerate}[leftmargin=8mm]
\item If $f_i$ does 
not have a 
weighted linear factor, then $$d_i=-\frac{k_{0}^{ i}}{k_{1}^{ i}} \text{ \hspace{2mm} or \hspace{2mm} } d_i=-\frac{k_{1}^{ i}}{k_{0}^{ i}},$$ where $k_{0}^{ i}$ and $k_{1}^{ i}$ are the first two characteristic exponents of a Puiseux expansion 
of $f_i$. 
\item
With notation and assumptions as in (1) and assuming in addition that $d_j\neq -1$ for some $j\neq i$, the slope $d_i$ is uniquely determined by the characteristic exponents $k_{0}^{i}$, $k_{1}^{i}$, the intersection number of any branch 
of $f_i$ with any branch of $f_j$, and the first characteristic exponent $k_0^j$ of a Puiseux expansion of that branch.
\item\label{lem:PuiseuxIntersectionNewtonBoundary linear} Suppose $f_i$ has a weighted linear factor, and denote this factor by $f_i'$. If for some $j$ we have $d_j\geq d_i\geq -1$ or $-1\geq d_i\geq d_j$, and $f_j$ has a factor $f_j'$ with $f'_i\neq f'_j$,  then $d_i$ is determined by the intersection number of $f_i'$ and $f_j'$, and the first characteristic exponent $k_0^j$ of a Puiseux expansion of $f_j'$.
\item If $d_i\leq -1$, then the sum of the first characteristic exponents associated to the irreducible branches of $f_i$ is the  $x$-degree of the saturation of $f_i$. If $d_i\geq -1$, then the sum yields the $y$-degree of the saturation of $f_i$.
\end{enumerate}
\end{lemma}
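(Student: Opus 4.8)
\textbf{Proof proposal for Lemma~\ref{lem:PuiseuxIntersectionNewtonBoundary}.}

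The plan is to reduce everything to a single germ $f_i$ with exactly one facet $\overline\Delta_i$ of slope $d_i$ and to analyze its Puiseux expansions via the Newton--Puiseux algorithm (Algorithm~\ref{alg:Puiseux}), together with Remark~\ref{remark:Puiseux}\eqref{rmk slope}. After possibly interchanging $x$ and $y$ we may assume $d_i\le -1$, i.e. the facet has normal vector $-(p_i,q_i)$ with $p_i\le q_i$; the case $d_i\ge -1$ follows by symmetry. I would begin by recording the structure of $\jet(f_i,\overline\Delta_i)$: by non-degeneracy of the Newton boundary, its saturation has finite Milnor number, so after factoring out powers of $x$ and $y$ it is a product of \emph{distinct} homogeneous (in the weighted sense) irreducible polynomials, no repeated non-linear factor and no repeated weighted-linear factor other than $x,y$. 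Each irreducible factor $g$ of the saturation, being weighted-homogeneous of $w(\overline\Delta_i)$-degree, corresponds after one step of Algorithm~\ref{alg:Puiseux} to a collection of branches of $f_i$ all of whose first facet in the algorithm is $\overline\Delta_i$; since $g$ is not divisible by a repeated factor, the weighted $\jet$ along that facet for each such branch is a nonzero constant times the irreducible $g$, so $p_i/q_i$ is a characteristic exponent of the branch precisely when $g$ is \emph{non-linear}. This proves (1): if $f_i$ has no weighted-linear factor then $p_i = k_1^i$ and $q_i = k_0^i$ (or vice versa after the interchange), so $d_i = -k_1^i/k_0^i$ or $-k_0^i/k_1^i$.

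For (4) I would use the intersection-number/parametrization computation: take a branch $h$ of $f_i$ with parametrization $(t^{k_0^h},\,a\,t^{k_1^h}+\cdots)$ coming from the facet $\overline\Delta_i$ (so $k_0^h/k_1^h$ is the reciprocal of the absolute slope). The $w(\overline\Delta_i)$-weighted order of $h$ along the parametrization of another branch sitting on the same facet is controlled entirely by the weighted degree of the facet; summing over all irreducible factors $g$ of the saturation and tracking multiplicities of $x,y$, the total weighted degree $d(f_i)$ equals $\sum (\text{first characteristic exponent of each branch}) \cdot (\text{appropriate leg})$. Concretely, when $d_i\le -1$ the $x$-intercept of $\overline\Delta_i$ equals $\deg_x$ of the saturation and equals $\sum_{\text{branches}} k_0^{\bullet}$, because each branch of first characteristic exponent $e$ contributes exactly $e$ to the $x$-degree of the saturation of $f_i$ (this is the statement that a Puiseux series $t\mapsto(t^e,\dots)$ gives an irreducible factor of $y$-degree dividing $e$, and the weighted-homogeneous piece on $\overline\Delta_i$ has $x$-degree summing to the intercept). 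The symmetric statement gives the $d_i\ge -1$ case.

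For (2) and (3), the point is to pin down $d_i$ rather than just its two possible reciprocal values. Here I would use the intersection number with a branch of a \emph{different} factor $f_j$. Given a branch $\beta_i$ of $f_i$ and a branch $\beta_j$ of $f_j$ with distinct tangent behaviour, by the standard formula $i(\beta_i,\beta_j)=\ord_t \beta_i(\text{param of }\beta_j)$, and since their Newton polygons have different slopes, the leading exponent of $\beta_j$'s parametrization is $k_0^j$ and the contact of $\beta_i$ with $\beta_j$ is governed by $\min$ of the first-facet data: one gets $i(\beta_i,\beta_j) = k_0^j \cdot (\text{the }x\text{- or }y\text{-weight of }\overline\Delta_i)$, up to the elementary combinatorics of which variable dominates. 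Knowing $k_0^i,k_1^i$ (hence the \emph{unordered} pair $\{p_i,q_i\}$ from (1)) together with this product and $k_0^j$ then fixes the ordered pair, hence $d_i$; the hypothesis $d_j\neq -1$ (resp. the chain $d_j\ge d_i\ge -1$) is exactly what guarantees $\beta_j$'s first exponent breaks the symmetry. For (3), $f_i'$ weighted-linear means $\{p_i,q_i\}$ is not constrained by characteristic exponents at all, so one genuinely needs the intersection number $i(f_i',f_j')$ and $k_0^j$ to recover both $p_i$ and $q_i$.

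\emph{Main obstacle.} The delicate part is the bookkeeping in steps (2)--(3): showing that the intersection number of a branch on the facet $\overline\Delta_i$ with a branch of $f_j$ equals $k_0^j$ times the correct weight (and not some larger number coming from unexpectedly high contact). This requires carefully invoking that $f_i$ and $f_j$ have \emph{different} slopes $d_i\neq d_j$ so that after the first Newton--Puiseux step the expansions separate, and handling the borderline cases where one of the slopes equals $-1$ (whence the need for the hypotheses $d_j\neq -1$ in (2) and the ordering conditions $d_j\ge d_i\ge -1$ or $-1\ge d_i\ge d_j$ in (3)). I would isolate this contact computation as the technical heart, proving it by a direct valuation estimate on the parametrizations, and then assemble (1)--(4) from it.
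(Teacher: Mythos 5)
Your proposal tracks the paper's own proof quite closely: part (1) via the Newton--Puiseux algorithm and the observation that the first fractional exponent is a genuine characteristic exponent precisely when the facet jet is non-linear (hence a field extension occurs at the first step); part (4) via counting $x$- or $y$-degrees of the saturated branches, matching the $y$-generality order $n = k_0$; and parts (2)--(3) by pinning down the orientation of the slope through a valuation computation of the intersection number against a branch of a different facet. The paper does essentially the same, parametrizing $f_i'$ and plugging into $f_j'$ written as a Weierstrass product $\prod(y-\psi_l(x))$, then comparing the order of $y(t)$ with the order of $\psi_l(x(t))$.

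One point of caution on the part you flag as the ``main obstacle.'' You write the outcome as an \emph{equality} $i(\beta_i,\beta_j) = k_0^j\cdot(\text{weight of }\overline\Delta_i)$ in both orientations, with the answer depending on which variable dominates. The paper's actual computation (in part (2)) yields an equality $i(f_i',f_j') = k_0^j\cdot k_0^i$ only in the favourable orientation, and a \emph{strict inequality} $i(f_i',f_j') > k_0^j\cdot k_0^i$ in the other; the contact then distinguishes the two cases not because it produces $k_0^j\cdot k_1^i$ on the nose but because it strictly exceeds the benchmark. (When the flatter branch dominates, the individual factors $y(t)-\psi_l(x(t))$ can have fractional order and only the $k_0^j$-fold product is an integer, so the clean product formula you sketch does not hold termwise.) If you carry out the ``direct valuation estimate'' you mention, you should expect an inequality rather than two equalities; with that correction your argument goes through and is identical in spirit to the paper's. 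Also worth noting that in part (3) the paper's setup of the weighted-linear branch is essentially what you describe, and the role of the hypothesis $d_j \ge d_i \ge -1$ (or the reverse) is exactly as you say: it guarantees that the contact is computed at the correct order $n\cdot k_0^j$ with no higher cancellation, which is the ``borderline'' bookkeeping you correctly isolate.
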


\begin{proof}
\begin{enumerate}[leftmargin=8mm]
\item By Theorem \ref{theorem:faces} we have $w(f_i)\dash\jet(f_i,d(f_i))=\sat(\jet(f,\Delta_i))$. Since $f$ has a non-degenerate Newton boundary, $w(f_i)\dash\jet(f_i,d(f_i))$ is non-degenerate. This implies that $w(f_i)\dash\jet(f_i,d(f_i))$ factorizes into $w(f_i)$-homogeneous irreducible polynomials, each factor with multiplicity one. Let $\gamma$ be a Puiseux expansion of any of the factors. Via a right equivalence exchanging the variables,  we may assume that $\gamma$ satisfies $n\leq m$ in Definition~\ref{def:characteristicExponents}.
By Remark \ref{rmk height2}(\ref{lemma:height}), we obtain that $f_i$ is a $y$-general polynomial of $y$-degree $n=k_{0}^{i}$.  Since $f_i$ is not smooth, we have that $n>1$.
Hence, a field extension over $\mathbb{C}$ of degree $>1$ is needed in line \ref{line:fieldExtension} of the first iteration of  Algorithm \ref{alg:Puiseux}, which implies by Remark~\ref{remark:Puiseux}(\ref{rmk slope}) that the slope
of $\overline{\Delta}_i$ is $d_i=-\frac{k_{0}^{i}}{k_{1}^{i}}$. 

\item Suppose $f'_i$ and $f_j'$ are branches of $f_i$ and $f_j$, respectively. Modulo switching the variables, we may assume that $w(y)>w(x)$ for $w=w(\Delta_j)$. Denoting the $y$-degree of $\jet(f_j,\overline{\Delta}_j)$ by~$n$, and using that $\jet(f_j,\overline{\Delta}_j)$ is $y$-general of order $n$, Remark~\ref{rmk height2}(\ref{lemma:height}) implies that $n=k_0^j$.

If $f'_i$ is standard homogeneous, then $d_i=-k_{0}^{i}/k_{1}^{i}=-k_{1}^{i}/k_{0}^{i}$, and there is nothing to prove. Depending on $\tilde w=w(\Delta_i)$ we have the following two cases: If $\tilde w (x)<\tilde w (y)$ then  $d_i=-k_{0}^{i}/k_{1}^{i}$, and a parametrization of $f_i'$ is given by
\begin{equation}\label{firstParametrization}x(t):=t^{k_{1}^{i}}+ \text{ higher order terms}, \hspace{5mm} y(t):=t^{k_{0}^{i}}.\end{equation}
while for $\tilde w (x)>\tilde w(y)$, we have  $d_i=-k_{1}^{i}/k_{0}^{i}$, and a parametrization of $f_i'$ is
\begin{equation}\label{secondParametrization}x(t):=t^{k_{0}^{i}},\hspace{5mm} y(t):=t^{k_{1}^{i}}+\text{ higher order terms,}\end{equation}
 
If (\ref{firstParametrization}) is a parametrization of $f'_i$, then $i(f'_i,f'_j)=n\cdot {k_{0}^{i}}={k_{0}^{j}}\cdot {k_{0}^{i}}$. If (\ref{secondParametrization}) is a parametrization of $f'_i$, then $i(f'_i,f'_j)>n\cdot {k_{0}^{i}} = {k_{0}^{j}}\cdot {k_{0}^{i}}$, which distinguishes between the two cases.

\item 
Without loss of generality, $\jet(f'_i,\Delta_i)$ is of the form $ay+bx^n$. 
Then  $-1\geq d_i\geq d_j$, and $f'_i$ has a parametrization of the form $x(t)=t$, $y(t)=c\cdot t^n+$ higher order terms in $t$, where $c$ is a non-zero constant. Since the $y$-intercept of $f'_j$ is $k_0^j$, we obtain that $i(f'_i,f'_j) = n\cdot  k_0^j$. Here, we use that $d_j\leq d_i$ and $f$ has a non-degenerate Newton boundary, which implies that when plugging the parametrization of $f'_i$ into that of $f_j'$, the lowest non-vanishing order is~$n\cdot  k_0^j$. We hence obtain that $d_i = - k_0^j / i(f'_i,f'_j)$.

\item Any irreducible factor $g$ of $f_i$ is $x$- and  $y$-general.  Hence, by Remark \ref{rmk height2}(\ref{lemma:height}), the first characteristic exponent of the Puiseux expansions of $g$ is either the $x$-degree or the $y$-degree of $f$, depending on whether the slope of the facet is $\leq -1$ or $\geq -1$, respectively.

\end{enumerate}
\end{proof}

\begin{remark}\label{smooth faces}   
Suppose that, in Lemma \ref{lem:PuiseuxIntersectionNewtonBoundary}, the factor $f_i$ does not satisfy any of the assumptions in (1)--(4). Then $f_i$ is a smooth branch,  and the face $\Delta_i$ meets a coordinate axis, without loss of generality the $y$-axis. There are right equivalences which do not change the jets of the faces $\Delta_j$, $j\neq i $ such that the resulting germ has a face $\Delta_i$ with arbitrarily small slope. Moreover, we can achieve  for $\Delta_i$  any $y$-intercept such that $d_i\geq d_j$ for all $j$. \end{remark}

\begin{proof}
We have to consider the case that in Lemma \ref{lem:PuiseuxIntersectionNewtonBoundary}(\ref{lem:PuiseuxIntersectionNewtonBoundary linear}) there does not exist a branch $f_j'$ with the required properties. Then $\jet(f,\Delta_i)$ is of the form $a\cdot x\cdot y^r+b\cdot y^{n+r}$ with constants $a,b\neq 0$. Applying the right equivalence $x\mapsto x-\frac{b}{a}y^n$, $y\mapsto y$, does not change $\jet(f,\Delta_j)$ for $j\neq i$, but strictly increases the $y$-intercept of $\jet(f,\Delta_i)$. By repeating the process, we can achieve that the $y$-intercept is larger than the determinacy of~$f$, which means that the pure $y$-power term and, hence, the face $\Delta_i$ can be removed by a right-equivalence. Then a face $\Delta_i$ with any $y$-intercept $k$ such that $d_i\geq d_j$ can be created by applying a transformation of the form $x\mapsto x+y^{k-r}$, $y\mapsto y$ to the germ.\footnote{Note that $k> r$, since the original face $\Delta_i$ satisfied $d_i\geq d_j$. We may, however, have $k < r+n$ if this is allowed by the convexity condition $d_i\geq d_j$.}
\\
\end{proof}

We collect the observations of the lemma:
\begin{prop}\label{prop branches}
Let $f\in\m\subset\mathbb C\{x,y\}$ be a germ with non-degenerate Newton boundary, and let $f=g_1\cdots g_s$ be a factorization of $f$ into irreducible branches, sorted by increasing slope. Then the intersection numbers and characteristic exponents of the factors $g_i$ determine the Newton polygon of each $g_i$, except that of $g_1$ or $g_s$ in case the facet containing the respective factor is smooth.\footnote{Equivalently, the factor is the only one of its slope and is weighted linear.}
\end{prop}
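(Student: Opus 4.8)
The plan is to combine the four parts of Lemma~\ref{lem:PuiseuxIntersectionNewtonBoundary} with Theorem~\ref{theorem:faces}, and then to handle the boundary branches via Remark~\ref{smooth faces}. First I would recall that, by Theorem~\ref{theorem:faces}, the factorization $f=f_1\cdots f_r$ into the ``one-facet'' factors is compatible with the factorization into irreducible branches: each $g_j$ is an irreducible factor of exactly one $f_i$, and the slope of the facet $\overline{\Delta}_i$ equals the slope of the branch's Newton polygon whenever the branch is not weighted linear. Since the Newton polygon of an irreducible branch $g_j$ is a single segment (or meets an axis), it is determined once we know its slope $d(g_j)$ together with one of its two intercepts (say the $x$-intercept if $d(g_j)\le -1$ and the $y$-intercept if $d(g_j)\ge -1$); conversely the other intercept is then forced by the slope.

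Next I would produce the slope. For a branch $g_j$ lying on a facet $\Delta_i$ which is \emph{not} smooth, there are two cases. If $g_j$ (equivalently $f_i$, since multiplicities are one by non-degeneracy) is not weighted linear, Lemma~\ref{lem:PuiseuxIntersectionNewtonBoundary}(1)--(2) gives the slope from the first two characteristic exponents $k_0^i,k_1^i$ of $g_j$ together with the intersection number $i(g_j,g_{j'})$ and the first characteristic exponent $k_0^{j'}$ of some other branch $g_{j'}$ lying on a facet of slope $\neq -1$ — and since the facet $\Delta_i$ is not the extremal one in the relevant direction (this is exactly what ``not $g_1$ or $g_s$ in the smooth case'' encodes, once combined with Remark~\ref{smooth faces}), such a partner branch exists among $g_1,\dots,g_s$. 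If $g_j$ is weighted linear but its facet is not smooth, then by definition the facet is shared with some other branch, so Lemma~\ref{lem:PuiseuxIntersectionNewtonBoundary}(3) recovers $d_i$ from $i(g_j,g_{j'})$ and $k_0^{j'}$ for that neighbour $g_{j'}$. Having the slope, Lemma~\ref{lem:PuiseuxIntersectionNewtonBoundary}(4) gives the relevant intercept of $\sat(f_i)$ as the sum of the first characteristic exponents of the branches on $\Delta_i$; since each of these branches is $x$- and $y$-general and its own first characteristic exponent is that branch's intercept (by Remark~\ref{rmk height2}(\ref{lemma:height})), and the branches on a single facet are totally ordered by the convexity of $\Gamma(f)$, the individual intercepts — and hence the individual Newton polygons $\Gamma(g_j)$ — are pinned down. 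This completes the determination for every branch not sitting alone on an extremal smooth facet.

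Finally I would explain why the two stated exceptions are genuinely unavoidable and why everything else is covered: Remark~\ref{smooth faces} shows precisely that when $g_1$ (or $g_s$) is the unique branch on the lowest (resp.\ highest) facet and that facet is smooth, one may slide the intercept and slope of that facet freely by right-equivalences that fix all other facets, so its Newton polygon cannot be an invariant of the characteristic exponents and intersection numbers; these are the only facets for which the ``partner branch'' argument above can fail, because an interior facet always has neighbours on both sides and a non-smooth extremal facet either is non-linear (case~(1)--(2), which needs only \emph{some} facet of slope $\neq -1$, available unless there is a single facet of slope $-1$, a case one checks directly) or shares its facet with another branch (case~(3)). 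The main obstacle I anticipate is the careful bookkeeping in this last step: verifying that the ``$d_j\neq -1$ for some $j\neq i$'' hypothesis of Lemma~\ref{lem:PuiseuxIntersectionNewtonBoundary}(2) and the ``$f_j$ has a factor $f_j'\neq f_i'$ on a facet of comparable slope'' hypothesis of part~(3) are both actually satisfied for every branch outside the two advertised exceptions — i.e.\ that the combinatorics of the Newton polygon leaves no further degenerate configuration uncovered. Once that enumeration of cases is complete, the proposition follows by assembling the per-facet conclusions.
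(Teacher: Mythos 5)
Your proposal is correct and follows essentially the same route as the paper, which deduces the proposition directly from Lemma~\ref{lem:PuiseuxIntersectionNewtonBoundary}; you simply make explicit the per-facet case analysis (slopes from parts (1)--(3), intercepts from part (4) together with Remark~\ref{rmk height2}(\ref{lemma:height}), and the exceptional smooth extremal facets from Remark~\ref{smooth faces}) that the paper leaves implicit. One small point of care: the parenthetical claim that ``$g_j$ not weighted linear $\Leftrightarrow$ $f_i$ has no weighted linear factor'' holds not because multiplicities are one, but because on a single facet with weight $(w_1,w_2)$ in lowest terms either both $w_1,w_2>1$ (and then every irreducible factor is non-linear) or one of them equals $1$ (and then every factor is weighted linear), so all branches on a facet share the same (non)linearity.
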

\begin{proof}
Follows directly from Lemma \ref{lem:PuiseuxIntersectionNewtonBoundary}.
\end{proof}

\begin{corollary}\label{prop:PuiseuxIntersectionNewtonBoundary2}
Suppose that $f\in\m^2\subset\mathbb C\{x,y\}$ is equivalent to a germ with non-degenerate and normalized  Newton polygon $\Gamma$. Then the characteristic exponents and intersection numbers of the branches of $f$ uniquely determine $\Gamma$ (up to permutation of the variables).
\end{corollary}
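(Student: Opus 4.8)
The plan is to reduce Corollary~\ref{prop:PuiseuxIntersectionNewtonBoundary2} to Proposition~\ref{prop branches}, whose only gap is the possible non-recoverability of the Newton polygon of a \emph{smooth} outermost branch, and then to close that gap using the normalization hypothesis. First I would pass to a convenient representative: by Theorem~\ref{prop:newtonBoundary} (or directly by hypothesis) we may assume $f$ itself has the non-degenerate normalized Newton polygon $\Gamma$, and by Theorem~\ref{theorem:faces} factor $f=f_1\cdots f_r$ according to the facets $\Delta_1,\ldots,\Delta_r$ of $\Gamma$ (ordered by increasing slope). Since characteristic exponents and intersection numbers are right-equivalence invariants and are the data we are allowed to use, Proposition~\ref{prop branches} already tells us that these data determine the Newton polygon of every irreducible branch $g_i$ of $f$, hence of every facet $\Delta_k$, \emph{except} possibly $\Delta_1$ or $\Delta_r$ when that facet consists of a single weighted-linear (smooth) branch. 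So the whole content of the corollary is: show that the normalization condition pins down these at most two exceptional facets.

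Next I would treat the exceptional outermost facet, say $\Delta_r$, assuming it is smooth and meets (without loss of generality) the $y$-axis; the case of $\Delta_1$ meeting the $x$-axis is symmetric. Here $\jet(f,\Delta_r)$ has saturation a smooth weighted-linear form $ay+bx^{n}$ (up to units), and the undetermined datum is the $y$-intercept of $\Delta_r$. By convexity of $\Gamma$, the slope of $\Delta_r$ must dominate every other facet slope, so the $y$-intercept is forced to be large enough that $\Delta_r$ lies above the line through $\Delta_{r-1}$ prolonged; but a priori several values are compatible with convexity. The normalization condition (Definition~\ref{def normal cond}) is exactly what removes this ambiguity: for a facet meeting the $y$-axis with $w(x)<w(y)$ it forces $b=0$ in the factorization $\jet(f,\Delta_r)=x^a y^b g_1\cdots g_n\widetilde g$ unless $n=0$; since the branch is weighted linear we are in the situation where the normalized germ is required to have its $y$-power term removed, which (together with convenience, re-imposed by adding $y^{\mu+2}$ as in the discussion following Lemma~\ref{lem normal}) forces the $y$-intercept of $\Delta_r$ to take the minimal value permitted by convexity, namely the one determined by $\Delta_{r-1}$ and the $y$-degree of the saturation of $f_r$. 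That $y$-degree is $k_0$ for the branch (Remark~\ref{rmk height2}(\ref{lemma:height})), hence recoverable from the characteristic exponents. Thus, under normalization, even the exceptional facet is determined by the allowed data.

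Assembling: the non-exceptional facets are determined by Proposition~\ref{prop branches}; each exceptional outermost facet, if smooth, is determined by the normalization condition together with the slope of its neighbour and the first characteristic exponent of its branch (and if it is not smooth, Proposition~\ref{prop branches} already handled it); and the facets fit together into a single convex polygon $\Gamma$ in only one way once all their slopes, endpoints, and the two axis intercepts are known. Hence $\Gamma$ is uniquely determined by the characteristic exponents and intersection numbers of the branches of $f$, up to the permutation of $x$ and $y$ that may have been used to arrange $n\le m$ in the Puiseux normalization. This proves the corollary; the statement of Remark~\ref{rmk sec4 b} follows at once.

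I expect the main obstacle to be the bookkeeping around the two outermost facets and the interaction between \emph{normalization} (which may destroy convenience) and \emph{convenience} (re-imposed by adding a high pure power): one must check that re-adding $x^{\mu+2}$ or $y^{\mu+2}$ creates a smooth facet of the \emph{prescribed} minimal intercept rather than some other one, and that this is consistent with Remark~\ref{smooth faces}, which shows precisely that the intercept of such a smooth facet can be moved freely by right-equivalence but the normalized choice is canonical. Making this ``minimal admissible intercept'' argument precise — i.e.\ that convexity plus the normalization-induced removal of the competing pure-power term leaves exactly one lattice endpoint — is the one genuinely delicate point; everything else is a direct appeal to Proposition~\ref{prop branches} and Lemma~\ref{lem:PuiseuxIntersectionNewtonBoundary}.
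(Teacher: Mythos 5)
Your overall plan matches the paper's proof exactly: reduce to Proposition~\ref{prop branches} (which determines the Newton polygon of every branch except the outermost one when the facet containing it is smooth), and then argue that the normalization condition removes the remaining ambiguity in those at most two outer facets. That is precisely what the paper's one-line proof does.

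However, your account of \emph{how} the normalization condition pins down a smooth outer facet contains a genuine error. You claim that normalization ``forces the $y$-intercept of $\Delta_r$ to take the minimal value permitted by convexity, namely the one determined by $\Delta_{r-1}$ and the $y$-degree of the saturation of $f_r$.'' This is not what happens. The normalization process of Lemma~\ref{lem normal} and the remark following it pushes the pure $y$-power term \emph{up}, past the determinacy, eliminating the facet; then convenience is re-imposed by adding $y^{\mu(f)+2}$. The resulting smooth facet therefore meets the $y$-axis at height $\mu(f)+2$ — a \emph{large}, fixed value determined by the Milnor number, not the minimal height compatible with convexity. Since $\mu(f)$ (being a topological invariant of plane curve singularities) is computable from the characteristic exponents and intersection numbers, the slope of this facet is then determined by $\mu(f)+2$ together with the already-fixed interior vertex — which is the mechanism behind the paper's phrase ``the normalization condition fixes the slope.'' Your ``minimal intercept'' reading is the opposite convention, and your own sentence even cites the $y^{\mu+2}$ trick while simultaneously asserting minimality, which is self-contradictory. (There are also smaller slips: for a facet meeting the $y$-axis the normal satisfies $w(x) > w(y)$ and the relevant clause of Definition~\ref{def normal cond} is ``$a = 0 \Rightarrow n = 0$,'' not ``$w(x) < w(y)$ and $b = 0$''; and the implication in the normalization condition runs from the intercept hypothesis to $n=0$, not the other way around.) These are all repairable, and the structural reduction to Proposition~\ref{prop branches} is correct; but the argument for the exceptional facet, as written, does not establish the claim.
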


\begin{proof}
Follows from Proposition \ref{prop branches} since imposing the normalization condition fixes the slope of the Newton polygon of $g_1$ and $g_s$ in case the facet containing the respective factor is smooth.
\end{proof}

\pagebreak[2]
We now turn to the non-degeneracy statement. We first note:

\begin{prop}\label{prop no rational part}
Let $f\in \mathbb C\{x,y\}$ be a germ with non-degenerate Newton boundary. Then no two Puiseux series of $f$ agree up to first non-zero order.
\end{prop}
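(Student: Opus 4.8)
The statement concerns Puiseux series of a germ $f$ with non-degenerate Newton boundary; "agree up to first non-zero order" means two Puiseux series $\gamma, \gamma'$ of $f$ have the same leading exponent and the same leading coefficient. My plan is to argue by contradiction via the factorization $f = f_1 \cdots f_r$ along facets from Theorem \ref{theorem:faces}, reducing to a single facet and then to its quasihomogeneous jet.

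First, I would observe that if $\gamma, \gamma'$ have the same leading term, then after passing to the corresponding branches $g, g'$ of $f$, both branches have Puiseux expansions lying over the same facet $\Delta$ of $\Gamma(f)$ — indeed the leading exponent of a Puiseux expansion is determined by (the slope of) the facet of the Newton polygon of that branch, and the branch Newton polygons are assembled from the facets of $\Gamma(f)$. So both $g$ and $g'$ divide the same factor $f_i$ in the facet factorization, and it suffices to treat the case where $\Gamma(f)$ has a single facet $\Delta$ with weight $w$, so that $\jet(f,\Delta) = w\dash\jet(f,d)$ is $w$-quasihomogeneous with non-degenerate (i.e. finite Milnor number) saturation. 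The key point is then: the leading term of a Puiseux expansion of a branch $g$ of $f$ is detected by $\jet(f,\Delta)$; more precisely, if $\gamma(x) = c x^{m/n} + \cdots$ is a Puiseux expansion of a branch with $n \le m$, then $y - cx^{m/n}$ (suitably cleared) corresponds, after substituting the parametrization into $w\dash\jet(f,d)$, to a quasihomogeneous irreducible factor of $\sat(\jet(f,\Delta))$.

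The heart of the argument: non-degeneracy of $\jet(f,\Delta)$ means $\sat(\jet(f,\Delta))$ factors into pairwise non-associated $w$-homogeneous irreducible polynomials, each with multiplicity one. If two Puiseux expansions $\gamma, \gamma'$ of $f$ shared a leading term $cx^{m/n}$, then the corresponding branches would both contribute the $w$-homogeneous factor determined by that leading term to $\jet(f,\Delta)$ (up to associates — the leading term fixes the $w$-homogeneous line through $(m/n$-slope$)$), forcing that irreducible factor to appear with multiplicity $\ge 2$ in $\jet(f,\Delta)$, contradicting non-degeneracy. Concretely I would: (a) set up a parametrization $x(t)=t^n$, $y(t)=\gamma(t^n)$ as in Remark \ref{rmk height2}(\ref{item n}); (b) note that the lowest-$w$-degree part of $f$ along $\Delta$ evaluated on this parametrization must vanish identically (since $g \mid f$ and $g$'s Puiseux-defined curve lies on $\Gamma(f)$), which pins down the irreducible $w$-homogeneous factor of $\sat(\jet(f,\Delta))$ vanishing on $\{y = cx^{m/n}\text{-direction}\}$; (c) do the same for $\gamma'$ and conclude the same factor recurs, violating multiplicity one.

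The main obstacle I anticipate is bookkeeping around the case distinctions in Definition \ref{def:characteristicExponents} ($n \le m$ versus $n > m$) and the weighted-homogeneous identification: when $n > m$ the relevant facet direction is nearly horizontal and one should switch the roles of $x$ and $y$, and one must be careful that "agree up to first non-zero order" for a pure power term $cx^{m/n}$ with small $m$ really does correspond to a genuine $w$-homogeneous factor rather than just $x$ or $y$ (the coordinate axes are exceptional — but since $f$ need not be assumed convenient here, I should either invoke convenience via adding high-order pure powers as elsewhere in the paper, or argue directly that a shared nonzero leading term is never a bare coordinate). Handling that cleanly, and making precise the correspondence "leading term of Puiseux expansion $\leftrightarrow$ quasihomogeneous irreducible factor of $\sat(\jet(f,\Delta))$", is where the real work lies; once that dictionary is in place the multiplicity-one contradiction is immediate.
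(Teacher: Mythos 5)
Your proposal is correct and takes essentially the same approach as the paper: the paper's own proof is the one-sentence observation that agreement up to first non-zero order forces a multiple factor in the jet of the corresponding facet, hence degeneracy, and your reduction to a single facet plus the dictionary between leading Puiseux terms and $w$-homogeneous irreducible factors of $\sat(\jet(f,\Delta))$ is the same argument written out in detail. One small remark: your worry about $f$ not being convenient is moot, since the paper's definition of non-degenerate Newton boundary already requires convenience.
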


\begin{proof}
If two Puiseux expansions agree up to first non-zero order, then the jet of the corresponding facet has a multiple factor and, hence, is degenerate. 
\end{proof}

\begin{corollary}\label{cor galois}
Let  $f\in \mathbb C\{x,y\}$ be an irreducible Weierstrass polynomial, let  $\phi_1,\ldots,\phi_d$ be the Puiseux expansions  of 
$f$ in terms of $y$, let $G$ be the Galois group of the splitting extension $\mathbb{C}(x)\subset \mathbb{C}(x)[\phi_1,\ldots,\phi_d]$, and denote by $r$ the lowest order of a non-vanishing term of the expansions. 
Then $f$ has non-degenerate Newton boundary if and only if $\jet(\phi_i,r)$, $i=1,\ldots, d$ form an orbit of length~$d$ under the action of $G$.
\end{corollary}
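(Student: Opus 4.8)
The plan is to reduce the statement to the single facet $\Delta$ of $\Gamma(f)$ and to express $\jet(f,\Delta)$ through the leading terms of the Puiseux expansions. First I would record the structure of the $\phi_i$: since $f$ is an irreducible Weierstrass polynomial of $y$-degree $d$, Remark~\ref{rmk height2}(\ref{lemma:height}) gives that $d$ is the minimal $n$ with all expansions in $\mathbb C\{x^{1/n}\}$, so by Theorem~\ref{irreducible Puisuex} we have $\phi_i=\eta(\epsilon^i x^{1/d})$ for a primitive $d$-th root of unity $\epsilon$ and some $\eta=\sum_{j\ge m}b_jt^j\in t\,\mathbb C\{t\}$ with $b_m\neq 0$; hence $r=m/d$ and $\jet(\phi_i,r)=b_m\epsilon^{im}x^{m/d}$. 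For the Galois side: as $f$ is irreducible of degree $d$ and $\phi_1$ lies in the degree-$d$ extension $\mathbb C(x^{1/d})$, we get $\mathbb C(x)[\phi_1,\dots,\phi_d]=\mathbb C(x^{1/d})$, so $G\cong\mathbb Z/d$ is generated by $\sigma\colon x^{1/d}\mapsto\epsilon x^{1/d}$, and $\sigma$ permutes $\phi_1,\dots,\phi_d$ transitively. Since $\phi\mapsto\jet(\phi,r)$ is $\sigma$-equivariant, the set $\{\jet(\phi_i,r)\}_i$ is a single $G$-orbit, of length $d$ if and only if the $\jet(\phi_i,r)$ are pairwise distinct.

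Next comes the geometric input. Assume $d\ge 2$ (the case $d=1$, where $f$ is smooth and $G$ is trivial, is immediate). Then $f$ is $y$-general of order $d$ and $f(x,0)=(-1)^d\prod_i\phi_i$ has order $m$ in $x$, so $\Gamma(f)$ is convenient with vertices $(0,d)$ and $(m,0)$; by Theorem~\ref{theorem:faces} together with irreducibility it has exactly one facet $\Delta$, of weight proportional to $(d,m)$. Taking the $w$-principal part of $f=\prod_i(y-\phi_i)$ with $w=(d,m)$ extended to $x^{1/d}$ by $w\dash\deg(x^{1/d})=1$ (and using that leading forms multiply without cancellation), each factor contributes $y-\jet(\phi_i,r)$ and one obtains
\[
\jet(f,\Delta)=\prod_{i=1}^d\bigl(y-\jet(\phi_i,r)\bigr),
\]
which lies in $\mathbb C[x,y]$ and, since $b_m\neq 0$ and $r>0$, has nonzero $y^d$- and $x^m$-coefficients; in particular it is divisible by neither $x$ nor $y$, so it equals its own saturation.

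Finally, $\sat(\jet(f,\Delta))=\jet(f,\Delta)$ is a quasihomogeneous plane curve germ, hence has finite Milnor number if and only if it is reduced, and the displayed product is reduced if and only if its $\mathbb C[x^{1/d}][y]$-irreducible factors $y-\jet(\phi_i,r)$ are pairwise distinct, i.e.\ if and only if the $\jet(\phi_i,r)$ are pairwise distinct. Combining with the first paragraph: $f$ has non-degenerate Newton boundary if and only if $\jet(f,\Delta)$ is squarefree, if and only if the $\jet(\phi_i,r)$ are pairwise distinct, if and only if they form a $G$-orbit of length $d$, which is the assertion. (If one prefers an explicit computation: $\{\jet(\phi_i,r)\}=\{b_m\epsilon^{im}x^{m/d}\}$ has $d/\gcd(m,d)$ elements, and $\jet(f,\Delta)=(y^{d/g}-b_m^{d/g}x^{m/g})^{g}$ with $g=\gcd(m,d)$, so both conditions reduce to $\gcd(m,d)=1$.)

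I expect the main obstacle to be the bookkeeping in the middle paragraph: verifying cleanly that $\Gamma(f)$ has a single facet and that the $w$-principal part is compatible with the Puiseux factorization over $\mathbb C\{x^{1/d}\}$ and lands back in $\mathbb C[x,y]$ as the product of the leading terms, together with the standard but slightly fiddly equivalence between finiteness of the Milnor number of the facet jet and its being reduced. Once the $G$-equivariance of $\phi\mapsto\jet(\phi,r)$ and the transitivity of $G$ on the branches are available, the rest is formal.
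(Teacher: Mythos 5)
Your proof is correct and follows the same underlying idea as the paper: non-degeneracy of the single-facet jet $\jet(f,\Delta)$ is equivalent to squarefreeness, which is equivalent to pairwise distinctness of the lowest-order truncations $\jet(\phi_i,r)$, which in turn---by transitivity of the cyclic Galois action and $G$-equivariance of truncation---is the orbit-length condition. The paper dispatches this in one line by citing Proposition~\ref{prop no rational part} (no two Puiseux series of a Newton-non-degenerate germ agree up to lowest order), whereas you unpack the implicit content---Theorem~\ref{irreducible Puisuex}, the single-facet structure from Theorem~\ref{theorem:faces}, the factorization $\jet(f,\Delta)=\prod_i(y-\jet(\phi_i,r))$, triviality of the saturation, and the equivalence of finite Milnor number with being reduced---in full detail, but the substance is the same.
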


\begin{proof}
Immediate from Proposition \ref{prop no rational part}.
\end{proof}

\begin{corollary}\label{rmk no rational part}
Let $f\in \mathbb C\{x,y\}$ be a germ with non-degenerate Newton boundary. Then every Puiseux series of $f$ with respect to $y$ is either in $\mathbb C\{x\}$ or does not have a rational part.
\end{corollary}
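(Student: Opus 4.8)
### Proof Plan

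The plan is to derive this as a quick corollary of Proposition~\ref{prop no rational part}, essentially by unwinding the definitions of characteristic exponents and rational part in Definition~\ref{def:characteristicExponents}, and by reducing to the case of an irreducible branch via Corollary~\ref{cor galois}. First I would fix a Puiseux series $\gamma$ of $f$ with respect to $y$ and let $\gamma$ lie on the facet $\Delta$ of $\Gamma(f)$ of slope $d$; by Theorem~\ref{theorem:faces} it suffices to work with the factor $f_i$ having $\Gamma(f_i)$ with the single facet $\overline\Delta_i$ of slope $d_i=d$, so without loss of generality $f$ has a Newton polygon with a single facet, and after possibly passing to an irreducible Weierstrass factor (Theorem~\ref{irreducible Puisuex}), we may assume $f$ is an irreducible Weierstrass polynomial of $y$-degree $n$. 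Write $\gamma=\sum_{j\ge m}b_jx^{j/n}$ with $b_m\ne 0$ as in Definition~\ref{def:characteristicExponents}.

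The key dichotomy is between $n\le m$ and $n>m$. If $n>m$, then $\gamma$ has a pole in $x$ (its first exponent $m/n<1$) unless... actually the relevant observation is cleaner: suppose $\gamma\notin\mathbb C\{x\}$, i.e. $\gamma$ has at least one exponent $j/n$ with $n\nmid j$. I want to show $\gamma^{rat}=\gamma$, i.e. there is no maximal $m_0$ with $\sum_{j=m}^{m_0}b_jx^{j/n}\in\mathbb C\{x\}$ — equivalently, the exponents $j/n$ with $b_j\ne 0$ and $n\nmid j$ do not start beyond some point with everything before them being integral. Concretely: if $\gamma^{rat}\ne\gamma$ were a nontrivial rational part, then the truncation $\jet(\gamma,r')$ at the order $r'$ just before the first fractional exponent would agree with the truncation of $\gamma$ to that order, and — here is the point — all the conjugate Puiseux series $\sigma(\gamma)$ under the Galois group $G$ would agree with $\gamma$ up to that same order $r'$, because $\gamma^{rat}$ has integer exponents in $x$ and is therefore $G$-fixed (the Galois action permutes $x^{1/n}\mapsto\zeta x^{1/n}$, leaving integer powers of $x$ untouched). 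But then $\jet(\phi_i,r)$ for the lowest non-vanishing order $r$ would not form an orbit of length $d=n$ under $G$ — in fact it would, so I need to be more careful. The correct statement: if $\gamma$ had a rational part of positive degree, there would be \emph{some} order at which two distinct Puiseux series of $f$ (two distinct conjugates) first differ, and below that order they agree — but "agree up to first non-zero order" in Proposition~\ref{prop no rational part} refers to the very first term $b_mx^{m/n}$. So I should instead argue: if $\gamma\notin\mathbb C\{x\}$ and $\gamma^{rat}$ has degree $\ge 1$ (contains a positive-degree term), then the first exponent $m/n$ of $\gamma$ is an integer (since $\gamma^{rat}$ starts with an integer exponent and is the initial segment), hence $m/n\in\mathbb Z$, and then \emph{all} conjugates of $\gamma$ share the same leading term $b_m x^{m/n}$, so two Puiseux series of $f$ agree up to first non-zero order, contradicting Proposition~\ref{prop no rational part}.

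So the main obstacle is handling the boundary case where $\gamma^{rat}$ is nonzero but is just the single leading term $b_mx^{m/n}$ with $m/n\notin\mathbb Z$ — but this cannot happen, because $\gamma^{rat}$ is by definition the maximal initial segment lying in $\mathbb C\{x\}$, and a single term $b_mx^{m/n}$ with $n\nmid m$ is not in $\mathbb C\{x\}$, so in fact $m_0$ would have to be $<m$, giving $\gamma^{rat}$ the empty sum, and by the convention in Definition~\ref{def:characteristicExponents} one then sets $\gamma^{rat}=\gamma$; but we assumed $\gamma\notin\mathbb C\{x\}$, so this forces $\gamma^{rat}=\gamma$, which is exactly the claim. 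Putting it together: either $\gamma\in\mathbb C\{x\}$, or $\gamma^{rat}=\gamma$ because any nontrivial rational part would have to begin with an integer exponent, making the leading term $G$-invariant and contradicting Proposition~\ref{prop no rational part} via $n=d>1$. The write-up will be short: reduce to an irreducible Weierstrass polynomial, invoke Proposition~\ref{prop no rational part} to conclude that the leading exponent is not an integer when $n>1$, and observe that this rules out a nonempty rational part unless $\gamma$ is already a convergent power series.
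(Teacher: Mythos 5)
Your argument is correct and matches the paper's one-sentence proof: for an irreducible branch of $y$-degree $n\ge 2$, a nonzero rational part would force the $n$ conjugate Puiseux expansions (all of which are Puiseux series of $f$) to share the same integer-exponent leading term, contradicting Proposition~\ref{prop no rational part}, and the $y$-degree $1$ case is precisely the $\gamma\in\mathbb C\{x\}$ case. One small terminological slip along the way: ``does not have a rational part'' means $\gamma^{rat}=0$ rather than $\gamma^{rat}=\gamma$, since for $\gamma\notin\mathbb C\{x\}$ a maximal $m_0$ in Definition~\ref{def:characteristicExponents} always exists (the empty sum $m_0<m$ always qualifies), so $\gamma^{rat}=\gamma$ occurs only for $\gamma\in\mathbb C\{x\}$; this does not affect the substance of your proof, which correctly shows the leading exponent must be non-integral.
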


\begin{proof}
If a branch has $y$-degree $\geq 2$, then it has at least two conjugate Puiseux series, which have the same rational part. By Proposition \ref{prop no rational part}, the rational part has to vanish.
\end{proof}

\begin{lemma}\label{linear degenerate}
Assume that $g\in\m^2\subset\mathbb C\{x,y\}$ has finite Milnor number and maximal Newton number in its right-equivalence class. \begin{enumerate}[leftmargin=8mm]
\item  $\jet(g,\Delta)$ is divisible by $y^2$ in case $w(y)\geq w(x)$, and
\item  $\jet(g,\Delta)$ is divisible by $x^2$ in case $w(x)\geq w(y)$.
  \end{enumerate}

\end{lemma}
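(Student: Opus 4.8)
\emph{Proof plan.}
The plan is to prove both assertions by contradiction from the maximality of $N(g)$, the decisive input being the formula $N=2S-a-b+1$ of Theorem~\ref{theorem:milnorNumber}: if we can produce $g'\requiv g$ whose Newton polygon cuts off a strictly larger area $S$ with $a,b$ under control, then $N(g')>N(g)$, contradicting that $N(g)$ is maximal in the right-equivalence class of $g$. By the automorphism exchanging $x$ and $y$ it suffices to establish~(1). I would first reduce to the case that $\jet(g,\Delta)$ is degenerate, which is the situation in which the claim carries content. So fix $\Delta$ with $w:=w(\Delta)$ satisfying $w(y)\ge w(x)$ and $\jet(g,\Delta)$ degenerate, and assume for contradiction that $y^2\nmid\jet(g,\Delta)$; equivalently, the lowest lattice point of $\Delta$ has $y$-exponent $\le 1$.

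Next I would analyse $\jet(g,\Delta)$ through its factorization $x^ay^b\cdot g_1\cdots g_n\cdot\widetilde g$ into $w$-homogeneous pieces over $\mathbb C[x,y]$, as in Remark~\ref{remark:NB}, where $g_1,\dots,g_n$ are pairwise non-associated weighted-linear factors not associated to $x$ or $y$ and $\widetilde g$ collects the non-linear homogeneous ones; since $w(y)\ge w(x)$, each $g_i$ has the form $x^{q}-c_iy$ with $c_i\neq 0$ and a common $q\ge 1$. Degeneracy of $\jet(g,\Delta)$ together with $b\le 1$ and $g\in\m^2$ pins this factorization down: the facet must be short and the degeneracy must come from a repeated weighted-linear factor $x^{q}-cy$ with $c\neq 0$ (the ``linear'' degeneracy of the lemma's name), the remaining possibilities being ruled out because they would contradict $g\in\m^2$ or would make $\Delta$ non-degenerate or smooth.

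In this situation the aim is to build $g'\requiv g$ with $N(g')>N(g)$. The tool is the coordinate change $\phi\colon y\mapsto y+c^{-1}x^{q}$, $x\mapsto x$ attached to the repeated factor $x^{q}-cy$: it preserves the $w$-degree, so it sends $\jet(g,\Delta)$ to a $w$-homogeneous polynomial on the same facet in which the factor $(x^{q}-cy)^{m}$ ($m\ge 2$) is replaced by a scalar multiple of $y^{m}$; replacing $g$ by $\phi(g)$, truncating at the determinacy by Theorem~\ref{thm fin det}, and tracking the images of the monomials of $g$ on and above $\Gamma(g)$, one reads off that the new Newton polygon is contained in $\Gamma_+(g)$ and agrees with it except near $\Delta$, where the collapse of $(x^{q}-cy)^{m}$ to $y^{m}$ vacates a triangular region adjacent to $\Delta$, pushing the $x$-intercept (or the lowest vertex of $\Delta$) outward and strictly enlarging $S$. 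A direct check, e.g.\ for $\jet(g,\Delta)=x^{a}(x-y)^{m}$, confirms that $2S-a-b+1$ strictly increases; composing with a rescaling $x\mapsto\alpha x$, $y\mapsto\beta y$ that normalizes the vertex coefficients then yields the desired $g'$.

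The step I expect to be the main obstacle is exactly this last, global, bookkeeping: a genuine right-equivalence acts on all facets simultaneously, so one must rule out that the area gained near $\Delta$ is lost along an adjacent facet of smaller slope. This is where convexity of $\Gamma(g)$ and the inequality $w(y)\ge w(x)$ are essential — under $\phi$ the images of monomials strictly above $\Delta$ stay strictly above $\Delta$, and the images of monomials of $\jet(g,\Delta)$ move only toward the $y$-axis — and carrying this through cleanly in the $b=1$ case (where $\Delta$ does not meet the $x$-axis) is the most delicate point. Once $g'$ with $N(g')>N(g)$ has been produced, the contradiction with maximality of $N(g)$ forces $y^2\mid\jet(g,\Delta)$; assertion~(2) follows by applying the automorphism $x\leftrightarrow y$.
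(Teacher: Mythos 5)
Your overall strategy — substitute the repeated weighted-linear factor $x^q-cy$ into $y$ and then argue that the Newton number strictly increases, contradicting maximality — is exactly the paper's argument; the paper's proof is essentially the single sentence ``map a linear homogeneous factor of exponent $\ge 2$ to $y$, add $y^{\mu+2}$ if necessary, and the Newton number strictly increases.'' So the method matches.

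There is, however, a real gap in your reduction step, and it points to a genuine subtlety in how the lemma should be read. You assert that ``the degeneracy must come from a repeated weighted-linear factor $x^q-cy$ with $c\neq 0$, the remaining possibilities being ruled out because they would contradict $g\in\m^2$ or would make $\Delta$ non-degenerate or smooth.'' This is not correct. Writing $w(\Delta)=(w_1,w_2)$ in lowest terms, the irreducible $w$-homogeneous factors of $\jet(g,\Delta)$ other than $x,y$ are all of the form $\alpha x^{w_2}-\beta y^{w_1}$, which are linear homogeneous precisely when $w_1=1$ or $w_2=1$. Under the hypothesis $w(y)\ge w(x)$ nothing forces $w_1=1$: for instance $g=(x^2-y^3)^2+x^5+y^7\in\m^4$ has finite Milnor number, $w(\Delta)=(3,2)$, and the saturation of $\jet(g,\Delta)=(x^2-y^3)^2$ is degenerate with a repeated \emph{non-linear} factor; $b=0\le 1$ here yet neither $y^2$ nor $x^2$ divides the jet, and no weighted-linear substitution can absorb the repeated factor. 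This case is not ruled out by $g\in\m^2$; it is ruled out in Proposition~\ref{prop non-deg} itself by a separate intersection-number argument (the first bullet there). So the lemma is really being used under the additional implicit hypothesis — made explicit in the sentence where it is invoked in Proposition~\ref{prop non-deg} — that $\jet(g,\Delta)$ is degenerate after saturation \emph{and} factorizes entirely into linear homogeneous factors. That hypothesis is what pins down $w_1=1$ and the existence of a linear factor of multiplicity $\ge 2$ with $c\ne 0$; it should be taken as input, not derived. Relatedly, your opening ``reduce to the degenerate case, which is the situation in which the claim carries content'' is not a reduction: for a non-degenerate facet with $b\le 1$ (e.g.\ $g=x^2+y^2$) the statement is simply false, so it is not vacuous — it only becomes correct once the extra hypothesis just described is in force.

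Two smaller points. First, under $\phi\colon y\mapsto y+c^{-1}x^q$ the image of a monomial $x^iy^j$ is supported on $\{(i+lq,\,j-l):0\le l\le j\}$, so the exponent vectors spread toward the \emph{$x$}-axis, not the $y$-axis as you write; this does not break the argument, but the direction matters when you track which adjacent facets can be affected (the transformation has strictly positive $w'$-filtration for facets steeper than $\Delta$, so those jets are untouched, whereas any shallower facet — arising when $b=1$ — must be dealt with separately). Second, the global bookkeeping that $N$ strictly increases is indeed the delicate point you identify, especially in the $b=1$ case; note the paper's proof glosses over this entirely, so sharpening it is welcome content, not a deviation.
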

\begin{proof}
\begin{enumerate}[leftmargin=8mm]
\item \label{y2}   Suppose $\jet(g,\Delta)$ is not divisible by $y^2$. 
   One then can apply a right-equivalence to the germ to make the jet divisible by $y^2$ by mapping  a linear homogeneous factor of the jet with exponent $\geq 2$ to~$y$. If the resulting Newton polygon is not convenient, one can add the monomial $y^{\mu(g)+2}$ to make the germ convenient. We thus obtain a germ $\tilde{g}$ which is right-equivalent to $g$ and has $N(\tilde{g})>N(g)$, contradicting our assumption on $g$.
   \item Replace $y$ by $x$ in (\ref{y2}).
\end{enumerate}
\end{proof}

The key result is then the following proposition. We provide a proof, which only relies on an analysis of the Puiseux expansions of the germs. For the convenience of the reader we  give all details of the proof.

\begin{prop}\label{prop non-deg}
Suppose that for $f,g\in\m^2\subset\mathbb C\{x,y\}$  there is bijection between the irreducible branches of $f$ and $g$  such that 
\begin{itemize}[leftmargin=5mm]
\item the corresponding characteristic exponents are the same, and
\item the intersection numbers between corresponding branches coincide,
\end{itemize} 
and  suppose that
$f$ is equivalent to a germ with non-degenerate Newton boundary, and $g$ has finite Milnor number. 

Then any germ in the right-equivalence class of $g$ with maximal Newton number has a non-degenerate Newton boundary.
In particular, $g$ is equivalent to a germ  with non-degenerate Newton boundary.

\end{prop}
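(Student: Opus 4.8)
The plan is to reduce the whole statement to the construction of a single representative of the right-equivalence class of $g$ having a non-degenerate Newton boundary. First I would isolate the Kouchnirenko bookkeeping that makes this reduction work. By Theorem~\ref{theorem:milnorNumber} one has $N(h)\leq\mu(h)$ for every convenient germ $h$, with equality exactly when $h$ has a non-degenerate Newton boundary, and $\mu$ is a right-equivalence invariant. So suppose we have produced $\hat g\sim g$ with a non-degenerate Newton boundary; then $N(\hat g)=\mu(\hat g)=\mu(g)$, while every convenient $g'\sim g$ satisfies $N(g')\leq\mu(g')=\mu(g)=N(\hat g)$. Hence $\hat g$ already realizes the maximal Newton number in the class, this maximal value being $\mu(g)$; consequently any $g'\sim g$ of maximal Newton number satisfies $N(g')=\mu(g)=\mu(g')$ and is therefore non-degenerate by Theorem~\ref{theorem:milnorNumber}. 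Together with the trivial observation that $\hat g$ itself witnesses the ``in particular'' clause, this settles the proposition, granted the construction of $\hat g$.

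For the construction I would use the hypothesis on $f$, which we may assume (replacing $f$ by an equivalent germ, since characteristic exponents and intersection numbers are right-equivalence invariants, and appealing to Lemma~\ref{lem normal} or adding high powers) to be convenient with a non-degenerate Newton boundary. Write $f=g_1\cdots g_s$ and $g=h_1\cdots h_s$ as products of irreducible branches, with $h_i$ corresponding to $g_i$ under the given bijection, so that $g_i$ and $h_i$ share characteristic exponents and $i(h_i,h_j)=i(g_i,g_j)$ for all $i,j$. By Proposition~\ref{prop branches} (and Remark~\ref{smooth faces} for the outermost smooth facet, if any), this data prescribes the Newton polygon of each $g_i$, hence all of $\Gamma(f)$. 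The aim is to choose coordinates in which $g$ acquires exactly this Newton polygon. I would do this by treating the branches in order of increasing contact, i.e.\ increasing slope of the facet carrying them: using the Newton--Puiseux expansion of $h_i$, apply right-equivalences $y\mapsto y+(\text{higher-order terms in }x)$ and $x\mapsto x+(\text{higher-order terms in }y)$ to strip off every part of the expansion not forced by the characteristic exponents or by the required intersection numbers with the other branches (cf.\ Corollary~\ref{rmk no rational part}), and to place its leading term as dictated by those intersection numbers; Lemma~\ref{lem:PuiseuxIntersectionNewtonBoundary} then reads the slope and the $x$- and $y$-intercepts of the facet of $h_i$ off this normalized data, and by construction they match those of $g_i$. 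Carrying this out for all branches yields $\hat g\sim g$ with $\Gamma(\hat g)=\Gamma(f)$.

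It then remains to verify that $\hat g$ is genuinely non-degenerate, i.e.\ that for each facet $\Delta$ of $\Gamma(\hat g)$ the saturation of $\jet(\hat g,\Delta)$ has finite Milnor number. Grouping the branches of $\hat g$ contributing to $\Delta$ and applying Corollary~\ref{cor galois} to each irreducible contributor, together with Remark~\ref{remark:Puiseux}(\ref{rmk char exp splitting}) (the characteristic exponents control the length of the Galois orbit of the relevant jet) and the matching intersection numbers (which force the leading jets of distinct contributors to $\Delta$ to be pairwise distinct, exactly as for the non-degenerate $f$), one concludes that this saturation factors into pairwise non-associated irreducible weighted-homogeneous polynomials each of multiplicity one, hence is non-degenerate; adding $x^{\mu(g)+2}$ or $y^{\mu(g)+2}$ if necessary makes $\hat g$ convenient without leaving its right-equivalence class. (Alternatively, one could start from a representative $g'\sim g$ of maximal Newton number, use Lemma~\ref{linear degenerate} to pin down the facet jets of $g'$, and argue non-degeneracy of $g'$ directly from the same Puiseux analysis.) I expect the main obstacle to lie in the second paragraph: a coordinate change acts on all branches simultaneously, so the normalizations of different branches interfere. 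Making this rigorous requires an induction --- on slopes, on the number of branches, or on $\mu$ --- in which one checks at each step that the transformation being applied affects only the branch and facet currently under consideration; the convexity of the Newton polygon (as already used in Lemma~\ref{lem normal} and Remark~\ref{smooth faces}) and the fact that higher-order coordinate changes leave lower-order jets untouched are what keep this bookkeeping consistent.
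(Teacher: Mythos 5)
Your first-paragraph reduction is nearly right, but one step is not supported by the reference you cite: you infer non-degeneracy of an arbitrary maximal-Newton-number representative from $N(g')=\mu(g')$ ``by Theorem~\ref{theorem:milnorNumber}'', whereas that theorem, as stated, gives only the implication non-degenerate~$\Rightarrow$~$\mu=N$. The converse is true for plane curves but is not stated in the paper, which deliberately avoids needing it: its proof picks a maximal-Newton-number representative $\tilde g$ and shows that $\tilde g$ itself is non-degenerate, so the ``any maximal representative'' clause comes for free.

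The genuine gap is the second paragraph, and you flag it yourself. Sequentially normalizing each branch's Puiseux expansion by right-equivalences, while preserving the normalizations of branches already handled, is not a bookkeeping detail that can be left to a sketched induction --- it is precisely the content of the proposition, since a single coordinate change moves all branches at once. The paper avoids the construction entirely and argues by contradiction: take $\tilde g$ of maximal Newton number and suppose some facet jet of $\Gamma(\tilde g)$ is degenerate after saturation. Splitting into the case of a repeated irreducible non-linear factor and the case of repeated linear factors (the latter shaped using Lemma~\ref{linear degenerate}), one compares vanishing orders of corresponding Puiseux expansions of $\tilde g$ and $\tilde f$ via Remark~\ref{remark:Puiseux}(\ref{rmk char exp splitting}) and Corollary~\ref{rmk no rational part}, and in each case exhibits a pair of corresponding branches whose intersection numbers cannot agree. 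Your third paragraph already assembles the right tools (Corollary~\ref{cor galois}, Galois orbit lengths, matching intersection numbers); the paper simply deploys these facts as a contradiction against a hypothetical degenerate maximal representative rather than as a verification after a construction that has not been established. Your parenthetical ``alternative'' at the end is in fact the paper's route, and is the one that closes the gap --- I would lead with it rather than with the constructive sketch.
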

\begin{proof}
Write $\tilde{f}$ for the germ with non-degenerate Newton boundary which is equivalent to $f$. Assume that $g$ is not equivalent  to a germ with non-degenerate Newton boundary. Choose in the right-equivalence class of $g$ a convenient germ $\tilde{g}$ such that $N(\tilde{g})$ is maximal. \begin{itemize}[leftmargin=5mm]
\item If there is a facet $\Delta$ of $\Gamma(\tilde g)$ with degenerate jet such that the jet has an irreducible factor~$q$ which is not linear homogeneous (and thus all factors, except powers of $x$ and $y$, are not linear homogeneous), then the jet factorizes as $$\jet(\tilde g,\Delta)=q^a\cdot h,$$ with $a\geq 2$, and $q\nmid h$.

Note that, if $\gamma$ is a Puiseux expansion of a branch of $\tilde g$ contributing to $q^a$, and $\phi$ is a Puiseux expansion of the corresponding branch of $\tilde f$, then $\nu(\gamma)=\nu(\phi)$: Since $q$ is non-linear, the orbit length of $\jet(\gamma,\nu(\gamma))$ is at least two, hence, by   Remark \ref{remark:Puiseux}(\ref{rmk char exp splitting}), also the orbit length of $\jet(\phi,\nu(\gamma))$ is at least two, so $\nu(\phi)\leq \nu(\gamma)$. On the other hand, since the branch of $\tilde g$, and hence of $\tilde f$, is non-smooth, Corollary \ref{rmk no rational part} implies that $\phi$ does not have a rational part. So $\jet(\phi,\nu(\phi))$, and hence $\jet(\gamma,\nu(\phi))$, has orbit length at least two, which implies that $\nu(\gamma)\leq \nu(\phi)$, and thus proves our claim.

Note also, that there are exactly $a$ pairwise different branches of $\tilde g$ contributing to $q^a$: Suppose one branch of $y$-degree $a'$ contributes a higher order factor $q^{a_1}$. By Corollary \ref{cor galois}, the $r$-jets of the Puiseux expansions of this branch of $\tilde f$ form an orbit of length $a'$. By Remark~\ref{remark:Puiseux}(\ref{rmk char exp splitting}), the corresponding branch of $\tilde g$ has the same orbit lengths of the jets of the Puiseux expansions. Since this orbit length is equal to $\deg_y(q)$, we have $\deg_y(q) =  a' = a_1 \cdot \deg_y(q)$, a contradiction.

Let $\tilde g_1$ and $\tilde g_2$ be two different branches of $\tilde g$ contributing to $q^a$, and let $\tilde f_1$ and $\tilde f_2$ be the corresponding branches of $\tilde f$. Denoting by $\gamma_2$ and $\phi_2$  Puiseux expansions of $\tilde g_2$ and $\tilde f_2$, respectively, we conclude that
$$i(\tilde g_1, \tilde g_2)>\deg_y(\tilde g_1)\cdot\deg_y(\tilde g_2)\cdot \nu(\gamma_2)=\deg_y(\tilde f_1)\cdot\deg_y(\tilde f_2)\cdot \nu(\phi_2)=i(\tilde f_1,\tilde f_2),$$ a contradiction.%
\footnote{The inequality $i(\tilde g_1, \tilde g_2)>\deg_y(\tilde g_1)\cdot\deg_y(\tilde g_2)\cdot \nu(\gamma_2)$ follows from the fact that the expansions of $\tilde g_1$ and $\tilde g_2$ have the same vanishing order and same lowest order term. Note that the role of $\tilde g_1$ and $\tilde g_2$ can be exchanged in the formula.
The same is true for the formula $i(\tilde f_1,\tilde f_2)=\deg_y(\tilde f_1)\cdot\deg_y(\tilde f_2)\cdot \nu(\phi_2)$, where equality holds since the expansions of $\tilde f_1$ and $\tilde f_2$ have same order, but different lowest order terms.}

\item Now suppose that there is a facet $\Delta$ of $\Gamma(\tilde g)$ with degenerate jet  (after saturation) which factorizes completely into linear homogeneous factors. Write $w=w(\Delta)$. Without loss of generality, we may assume that $w(y)\geq w(x)$. 
By Lemma \ref{linear degenerate}, $\jet(\tilde g,\Delta)$ is then divisible by $y^2$, so it factorizes as$$\jet(\tilde g,\Delta)=l^a\cdot y^b\cdot h,$$ with $l\neq x,y$ linear homogeneous, $a,b\geq 2$, and $l,y\nmid h$. Consider the  factorization of $\tilde g$ according to Theorem \ref{theorem:faces}. Using that $\jet(\tilde g,\Delta)$ is degenerate after saturation, it follows that $\tilde g$ has at least one branch $\tilde g_1$ which divides the $\Delta$-factor of $\tilde g$ and contributes to 
$l^a$, and another branch $\tilde g_2$ which divides a factor corresponding to a facet of the Newton polygon with larger slope than $\Delta$ and contributes to $y^b$.  Let  $\tilde f_1$ and $\tilde f_2$ be the corresponding branches of $\tilde f$, according to the given bijection. 
 
 Note that the Puiseux expansions of $\tilde g_2$ have a higher vanishing order than that of $\tilde g_1$. This implies that $i(\tilde g_1,\tilde g_2)=\deg_y(\tilde g_1)\cdot\deg_y(\tilde g_2)\cdot \nu (\gamma_1)$ where $\gamma_1$ is a Puiseux expansion of $\tilde g_1$. Note also that the Puiseux expansions of $\tilde g_1$ have a non-vanishing rational part.  Since $\tilde g_i$ and $\tilde f_i$ have the same characteristic exponents, $\tilde g_i$ is smooth if and only if $\tilde f_i$ is smooth ($i=1,2$). By Remark~\ref{rmk height2}(\ref{lemma:height}) and the correspondence of characteristic exponents, it follows that any branch dividing the $\Delta$-factor of $\tilde g$ has the same $y$-degree as the corresponding branch of $\tilde f$.
 
Without loss of generality, we may assume that $\tilde g_1$ is chosen as follows: If, among the branches contributing to the factor $l^a$, there is a  non-smooth branch, we choose $\tilde g_1$ as non-smooth. If all the contributing branches are smooth, we choose $\tilde g_1$ such that the Puiseux expansions of $\tilde f_1$ have maximal vanishing  order (among those 
 branches of $\tilde f$ which correspond to branches of $\tilde g$ contributing to $l^a$). We may also assume that $\tilde g_2$ is chosen, 
 among all branches contributing to the factor $y^b$, such that it is non-smooth if a non-smooth branch exists, and otherwise such that its Puiseux expansions have minimal vanishing order. 
 
If $\phi_1$ is a Puiseux expansion of $\tilde f_1$, then $$\nu(\phi_1)>\nu(\gamma_1),$$ as we observe by considering the following two cases:
If $\tilde g_1$ is not smooth, then Corollary~\ref{rmk no rational part} implies that the rational part of the Puiseux expansions of $\tilde f_1$ vanishes.  Since $\tilde g_1$ and $\tilde f_1$ have the same characteristic exponents and the rational part of $ \tilde g_1$ is non-zero, we conclude that $ \nu(\phi_1)>\nu(\gamma_1)$. 
 If $\tilde g_1$ is a smooth branch, then there is at least one other smooth branch $\tilde g_1'$ contributing to~$l^a$. Denote the corresponding branch of $\tilde f$ by $\tilde f_1'$. Let $\gamma'_1$ and $\phi'_1$ be Puiseux expansions of $\tilde g_1'$ and $\tilde f_1'$, respectively. So $\nu(\gamma_1)=\nu(\gamma'_1)$, and, by choice of $\tilde g_1$, 
 $\nu(\phi_1) \geq \nu(\phi'_1)$. If we would have $\nu(\phi_1) \leq \nu(\gamma_1)$, then $$i(\tilde g_1,\tilde g_1')>\nu(\gamma_1) \geq \nu(\phi_1)\geq\nu(\phi'_1) = i(\tilde f_1,\tilde f_1'),$$ using Proposition \ref{prop no rational part} in the  last equality. This contradicts our assumption. Hence, again, we can conclude that  $ \nu(\phi_1)>\nu(\gamma_1)$.

Let now $\gamma_2$ and $\phi_2$ be Puiseux expansions of $\tilde g_2$ and $\tilde f_2$, respectively. 
We observe that $$\nu(\phi_2)\geq \nu(\gamma_2) $$  by considering the following two cases: If $\tilde g_2$ is not smooth,  then (just as above) Corollary~\ref{rmk no rational part} implies that the rational part of the Puiseux expansions of $\tilde f_2$ vanishes. 
 Since $\tilde g_2$ and $\tilde f_2$ have the same characteristic exponents,  
we conclude that $ \nu(\phi_2)\geq \nu(\gamma_2)$.  If $\tilde g_2$ is a smooth branch, then there is at least one other smooth branch $\tilde g_2'$ contributing to~$y^b$. Denote the corresponding branch of $\tilde f$ by $\tilde f_2'$. Let $\gamma'_2$ and $\phi'_2$ be Puiseux expansions of $\tilde g_2'$ and $\tilde f_2'$, respectively. 
If $\nu(\phi_2)<\nu(\gamma_2)$, then 
$$i(\tilde g_2,\tilde g'_2)\geq \nu(\gamma_2)>\nu(\phi_2)\geq i(\tilde f_2,\tilde f'_2),$$
using  Proposition~\ref{prop no rational part} for the last inequality. We thus obtain a contradiction.

If $\nu(\phi_2)>\nu(\phi_1)$, then  $i(\tilde f_1,\tilde f_2)=\deg_y(\tilde f_1)\cdot\deg_y(\tilde f_2)\cdot \nu (\phi_1)$. The same is true in case 
$\nu(\phi_2)=\nu(\phi_1)$, since, by Proposition~\ref{prop no rational part}, for a germ with non-degenerate Newton boundary no two Puiseux expansions agree in lowest order. So we get
$$i(\tilde f_1,\tilde f_2)=\deg_y(\tilde f_1)\cdot\deg_y(\tilde f_2)\cdot \nu (\phi_1) >\deg_y(\tilde g_1)\cdot\deg_y(\tilde g_2)\cdot \nu (\gamma_1) = i(\tilde g_1,\tilde g_2),$$
 which contradicts our assumption, hence, we conclude that $\tilde g$ is non-degenerate. 
 On the other hand, if  $\nu(\phi_1)>\nu(\phi_2)$, then  \begin{align*}i(\tilde f_1,\tilde f_2)&=\deg_y(\tilde f_1)\cdot\deg_y(\tilde f_2)\cdot \nu (\phi_2)\geq \deg_y(\tilde g_1)\cdot\deg_y(\tilde g_2)\cdot \nu (\gamma_2)\\&>\deg_y(\tilde g_1)\cdot\deg_y(\tilde g_2)\cdot \nu (\gamma_1)=i(\tilde g_1, \tilde g_2).\end{align*} 
This again contradicts our assumption, and we conclude that $\tilde g$ is non-degenerate. 
\end{itemize}
\end{proof}

\begin{corollary}\label{prop:PuiseuxIntersectionNewtonBoundary}
Under the same assumptions as in Proposition \ref{prop non-deg},
 suppose that $f$ is equivalent to the germ  $\tilde f$ with a non-degenerate Newton boundary. Then $g$ is equivalent to a germ  with non-degenerate Newton boundary which has the same Newton polygon as $\tilde f$.
\end{corollary}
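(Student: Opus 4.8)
The plan is to derive this corollary almost immediately from Proposition~\ref{prop non-deg} together with the results on Newton polygons established earlier in the section. Proposition~\ref{prop non-deg} already tells us that any germ in the right-equivalence class of $g$ with maximal Newton number has a non-degenerate Newton boundary; fix such a representative $\tilde g$. What remains is to show that $\Gamma(\tilde g) = \Gamma(\tilde f)$ up to the possible ambiguity coming from smooth facets meeting a coordinate axis, and then to eliminate that ambiguity by passing to a further right-equivalent germ.

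First I would invoke Proposition~\ref{prop branches}: since both $\tilde f$ and $\tilde g$ have non-degenerate Newton boundary, and by hypothesis there is a bijection between their irreducible branches preserving characteristic exponents and pairwise intersection numbers, the Newton polygon of every branch of $\tilde g$ is determined by that data and hence agrees with the Newton polygon of the corresponding branch of $\tilde f$ — with the sole exception of the first or last branch when it is smooth and is the unique branch of its slope (the situation described in Proposition~\ref{prop branches} and Remark~\ref{smooth faces}). Since the Newton polygon of a germ with non-degenerate Newton boundary is determined by the Newton polygons of its branches (via the factorization of Theorem~\ref{theorem:faces}), it follows that $\Gamma(\tilde g)$ and $\Gamma(\tilde f)$ can differ only in the facets coming from such smooth extremal branches. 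For those, Remark~\ref{smooth faces} provides right-equivalences which do not alter the jets of any other facet and which allow us to realize, for the smooth extremal branch, a facet with any prescribed intercept on the relevant coordinate axis compatible with the convexity condition $d_i \geq d_j$ (resp. $d_i \leq d_j$) for all other $j$. Applying these transformations to $\tilde g$, we may match the intercepts of $\tilde f$ on those facets as well; the resulting germ $\tilde g'$ is right-equivalent to $g$, still has non-degenerate Newton boundary (by Remark~\ref{rmk sect 4 1}, a right-equivalence keeping — here, adjusting to — a non-degenerate Newton polygon with the same Newton number keeps non-degeneracy, or directly because smooth facets cannot become degenerate), and satisfies $\Gamma(\tilde g') = \Gamma(\tilde f)$.

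The main obstacle I anticipate is the bookkeeping around the extremal smooth facets: one must check carefully that the transformations of Remark~\ref{smooth faces} can be applied independently at the bottom facet and the top facet without interfering with each other or with the interior facets, and that after the adjustment the germ is still convenient with the same Newton polygon as $\tilde f$ (adding a high pure power $x^{\mu+2}$ or $y^{\mu+2}$ if necessary, as in the proof of Lemma~\ref{linear degenerate}). The argument here is entirely parallel to the iteration in the proof of Lemma~\ref{lem normal}: by convexity, a transformation $x \mapsto x + c\,y^k$, $y \mapsto y$ supported at the $y$-axis facet does not change the jet of any facet of larger slope, and dually for the $x$-axis facet, so the two adjustments commute up to terms above the Newton boundary. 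Finally, since characteristic exponents and intersection numbers are invariant under right-equivalence, $f$ and $\tilde g'$ share this data with $\tilde f$, so no case distinctions beyond those already in Proposition~\ref{prop branches} arise. This completes the proof.
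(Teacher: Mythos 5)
Your proposal follows essentially the same route as the paper: first apply Proposition~\ref{prop non-deg} to obtain a Newton non-degenerate representative $\tilde g$ of the class of $g$, then invoke Proposition~\ref{prop branches} to conclude the Newton polygons of $\tilde g$ and $\tilde f$ agree up to smooth extremal facets, and finally use Remark~\ref{smooth faces} to adjust those facets by right-equivalences that leave the other jets untouched. The extra care you take verifying that the adjustments at the two extremal facets do not interfere is a reasonable elaboration of what the paper leaves implicit, but it is the same argument.
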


\begin{proof}
By Proposition \ref{prop non-deg}, $g$ is equivalent to a germ  with non-degenerate Newton boundary. 
Then the claim follows directly from Proposition \ref{prop branches}: If the facet with factor $g_1$ is smooth, then the slope of $\Gamma(g_1)$ can be changed by a right-equivalence (which does not change any terms on any facet of the Newton polygon of $f$ except that with factor $g_1$) to any rational number $-\frac{1}{a}$ which is smaller 
than the slope of $\Gamma(g_2)$, see Remark \ref{smooth faces}. If the facet containing $g_s$ is smooth, we can argue analogously.
\end{proof}

\section{A Classification Algorithm for Isolated Singularities with  a non-degenerate Newton Boundary 
}
\label{section:generalAlg}

On the algorithmic side, our aim is to find for a given input polynomial in $\mathbb Q[x,y]$, that  over $\mathbb{C}$ is right-equivalent to a germ with a non-degenerate Newton boundary, an explicit right-equivalence to a normal form equation as in Theorem \ref{theorem:normalform}. The first step in the process is to transform the input polynomial to a germ with a non-degenerate Newton boundary. In the present section, we describe an algorithm for this purpose. Our algorithm is based on the following observations (the first two being obvious and the third implied by Remark \ref{rmk sect 4 1}):

\begin{remark} \label{remark:degenerate}
A (weighted) homogeneous polynomial $f\in \mathbb{C}[x,y]$ is degenerate (that is, its Milnor number is infinite, equivalently, it has a non-isolated singularity at zero) if and only if one of its irreducible factors has multiplicity larger than one. 
\end{remark}

\begin{remark} \label{remark:degenerate2}
A germ $f$ has a non-degenerate Newton boundary 
if the Milnor number of the saturation of $\jet(f,\Delta)$ for each  facet $\Delta$ of the Newton polygon of $f$ is finite. 
\end{remark}

\begin{remark} \label{remark:degenerate3}
Suppose $f$ is in the right-equivalence class of a germ with non-degenerate Newton boundary. Then $f$ has a non-degenerate Newton boundary if and only if it has maximal Newton number (within the right-equivalence class).
\end{remark}

The basic idea of the classification algorithm in this section is now as follows:  Let $g$ be a weighted homogeneous  polynomial (think of the jet of the input polynomial with regard to a facet of its Newton polygon). If $g$ has no factor of multiplicity larger than $1$, then $g$ is non-degenerate. The number, degree or multiplicity of factors of $g$ is invariant under automorphisms. Hence, if $g$ has more than two different linear homogeneous factors of multiplicity larger than $1$ or if it has a non-linear homogeneous factor of multiplicity larger than $1$, it cannot be mapped by a right-equivalence to a polynomial with a non-degenerate saturation. 

We first consider the case that $g$ is homogeneous with respect to the standard grading. If $g$ has exactly one linear homogeneous factor with multiplicity bigger than $1$, we can map this factor to $x$. If $g$ has exactly two linear homogeneous factors with multiplicity bigger than $1$, we can map these factors to $x$ and $y$, respectively. In both cases, after the linear transformation, the polynomial is non-degenerate after saturation.  We thus obtain Algorithm \ref{alg:standhomclas}.

\begin{algorithm}[h]
\caption{Transform the standard homogeneous $d$-jet of a polynomial, where $d$ is the maximal filtration of the polynomial, to a polynomial with a non-degenerate saturation.}%
\label{alg:standhomclas}
\begin{spacing}{1.05}
\begin{algorithmic}[1]

\Require{A polynomial $f\in\m^3, f\in \mathbb Q[x,y]$ with maximal filtration $d$.}

\Ensure{A polynomial $h\in \mathbb K[x,y]$ defined over an extension field $\mathbb Q \subset \mathbb K \subset \overline{\mathbb K}$ such that $h$ is right-equivalent to $f$, and  $\sat(\jet(f,d))$ is a non-degenerate polynomial (possibly term), if such an $h$ exists, and false otherwise.
}
\State $g:=\jet(f,d)$.
\State Factorize $g=c\cdot g_1^{l_1}\cdot\ldots\cdot g_n^{l_n}$, where $l_1\ge l_2\ge\cdots\ge l_n$, $c\in\mathbb Q$, $g_1\in \mathbb K[x,y]$ with $\mathbb K =\mathbb Q$ in case $n=1$, and $g_1,\ldots,g_{n}\in\overline{\mathbb K}[x,y]$ linear homogeneous and coprime, $g_1,g_2\in \mathbb K[x,y]$ with $[\mathbb K:\mathbb Q]$ minimal (among all admissible choices of $g_1,g_2$), in case $n\geq 2$.
\If{$n=1$}
\State Apply  a linear automorphism to $f$ sending $g_1\mapsto x$. \label{line:trans1}
\Else
\State Apply $g_1\mapsto x$, $g_2\mapsto y$ to $f$.\label{line:trans2}
\EndIf
\If{the saturation of $\jet(f,d)$ is degenerate}
\Return{\texttt{false}}
\EndIf
\Return{$f$}
\end{algorithmic}
\end{spacing}

\end{algorithm}

\begin{proof}
Correctness of the algorithm follows directly from Remark \ref{remark:degenerate} and the above discussion.
\end{proof}

\begin{remark}
To match the actual implementation, the input polynomial in Algorithm \ref{alg:standhomclas} is assumed to be defined over $\mathbb Q$. Note that the transformations that are applied in lines~\ref{line:trans1} and~\ref{line:trans2} of Algorithm \ref{alg:standhomclas} may require to pass to an algebraic extension of the base field. 
\end{remark}

\begin{algorithm}[ht]
\caption{Transform a non-standard weighted homogeneous $d$-jet of a polynomial, where $d$ is the maximum weighted filtration of the polynomial, to a polynomial  with a non-degenerate saturation.}%
\label{alg:homclas}
\begin{spacing}{1.05}
\begin{algorithmic}[1]

\Require{A polynomial $f\in\m^3$, $f\in\mathbb K[x,y]$, where $\mathbb K$ is an extension field of $\mathbb Q$, and a weight $w=(w_1,w_2)$ with $w_1\neq w_2$ the weight of one of the facets of the Newton boundary of $f$.}

\Ensure{A polynomial $h\in\mathbb L[x,y]$ defined over an extension field $\mathbb K \subset \mathbb L\subset \overline{\mathbb K}$, such that $h$ is right-equivalent to $f$, where  $\jet(h,w)$ is a polynomial (possibly term) with a non-degenerate saturation, if such an $h$ exists, and false otherwise.
}
\If{$w_1<w_2$}
\State $g:=\sat(\jet(f,w),x)$.
\EndIf
\If{$w_1>w_2$}
\State $g:=\sat(\jet(f,w),y)$.
\EndIf
\State Factorize $g=cg_1^{l_1}\cdots g_n^{l_n}\widetilde{g}$, where $c\in\mathbb K$, $l_1\ge l_2\ge\cdots\ge l_n$ and  $g_1\ldots,g_n\in \overline{\mathbb K}[x,y]$ weighted linear homogeneous and coprime with $n$ maximal, $\widetilde{g}\in \mathbb K[x,y]$ a product of non-associated irreducible polynomials in $\overline{\mathbb K}[x,y]$, and $g_1\in\mathbb L[x,y]$ with $[\mathbb L:\mathbb K]$ minimal.
\If{$l_2>1$}
\Return{\texttt{false}}
\EndIf
\If{$\widetilde{g}$ is degenerate}
\Return{\texttt{false}}
\EndIf
\If{$w_1<w_2$ \textbf{and} $n\geq 1$}\label{line:if1}
\State Apply right equivalence to $f$ which sends $g_1\mapsto y, x\mapsto x$. \label{line:trans21}
\EndIf
\If{$w_1>w_2$ \textbf{and} $n\geq 1$}\label{line:if2}
\State Apply right equivalence to $f$ which sends $g_1\mapsto x, y\mapsto y$. \label{line:trans22}
\EndIf
\Return{$f$}

\end{algorithmic}
\end{spacing}

\end{algorithm}

A similar approach can be used to decide whether a non-standard weighted homogeneous polynomial can be transformed by a weighted linear homogeneous transformation to a polynomial with non-degenerate saturation. Again, if $g$ has exactly one linear homogeneous factor with multiplicity bigger than $1$, we can map this factor to $x$ or $y$, in case $w_1>w_2$ or $w_2>w_1$, respectively. After this weighted linear transformation, the polynomial is non-degenerate after saturation. If $g$ has two (or more) linear homogeneous factors with multiplicity bigger than one,  it cannot be mapped by a right-equivalence to a polynomial with a non-degenerate saturation. The approach is summarized in Algorithm~\ref{alg:homclas}.

\begin{proof}
Correctness of the algorithm follows directly from Remark \ref{remark:degenerate} and the above discussion. 
\end{proof}

\begin{remark}\label{weighted field extension}
In Algorithm \ref{alg:homclas}, 
the transformations in line \ref{line:trans21} to \ref{line:trans22} are defined over the ground field~$\mathbb K$, except if all homogeneous linear factors appear with exponent one: Consider a factorization $$g=c\cdot g_1^{l_1}\cdot\ldots\cdot g_n^{l_n}\cdot \widetilde{g},$$ where $c\in\mathbb K$, $l_1\ge l_2\ge\ldots\ge l_n$ and $g_1,\ldots,g_n\in\overline{\mathbb K}[x,y]$ are linear homogeneous and coprime, with $n$ maximal, and $\widetilde{g}\in\overline{\mathbb K}[x,y]$. If $l_1>1$ and $g_1$ has a conjugate, then $g$ cannot be transformed to a germ with non-degenerate Newton boundary. Otherwise $g_1$ can be chosen with coefficients in $\mathbb{K}$, which implies that the corresponding weighted linear transformation in the algorithm is defined over $\mathbb K$.
\end{remark}

\begin{remark}
Note that the if-clauses in line \ref{line:if1} and \ref{line:if2} of Algorithm \ref{alg:homclas} ensure that the transformations in lines \ref{line:trans21} and \ref{line:trans22} are of $w$-degree $0$, and hence do not create terms below the Newton polygon.
\end{remark}

Using Algorithms \ref{alg:standhomclas} and \ref{alg:homclas}, we now state Algorithm \ref{alg:clas}, which, for a germ which is equivalent to a germ with non-degenerate Newton boundary,  determines a right-equivalent germ with normalized non-degenerate Newton boundary.

\begin{remark}
More generally, given a germ $f\in \mathbb{Q}[x_1\ldots,x_n]$ of corank $\leq 2$ with finite Milnor number, we can obtain a germ which has a non-degenerate Newton boundary and is stable equivalent to $f$ by first applying the Splitting Lemma to $f$. In case $\corank(f)=2$, we continue with Algorithm \ref{alg:clas} applied to the degenerate part of $f$. If $\corank(f) = 1$, after applying the splitting lemma, we can just return $\jet(f,d)$, where $d$ is the  maximal filtration of $f$ with respect to the standard grading. Note that then the Newton polygon is given by the vertex $\mu(f)+1$.
If $\corank(f)=0$, that is, $f$ has an $A_1$-singularity, we just return $0$.
\end{remark}

\begin{remark}Using the notation of Remark \ref{weighted field extension}, the case $l_1=1$ can only occur in the last call of Algorithm \ref{alg:homclas} from Algorithm \ref{alg:clas} with regard to facets with $w_2>w_1$, and similarly in the last call with regard to facets with $w_1>w_2$. Hence, the ordering in which Algorithm \ref{alg:clas} considers the facets of the Newton polygon can be chosen in a way that these (one or two) calls, and hence towers of field extensions, occur only in the last steps of the algorithm. This improves the performance and simplifies the implementation. 
\end{remark}

\begin{algorithm}[p]
\caption{Transform a germ to a germ with a non-degenerate, convenient Newton boundary.}%
\label{alg:clas}
\begin{spacing}{1.05}
\begin{algorithmic}[1]

\Require{A polynomial germ $f\in\mathbb Q[x,y]$, $f\in\m^3$, of corank $2$ with finite Milnor number.}

\Ensure{A polynomial $g$ with a non-degenerate Newton boundary which is right-equivalent to $f$, if such $g$ exists, and \texttt{false} otherwise. Moreover, $g$ is convenient and satisfies the normalization condition in Definition~\ref{def normal cond}.
}
\State $d:=$  maximal filtration of $f$ w.r.t. the standard grading.
\If{$\mu(\sat(\jet(f,d)))=\infty$ or the normalization condition is not satisfied}
\State apply Algorithm \ref{alg:standhomclas} to $f$.
\EndIf
\If{$f=\texttt{false}$}
\Return{\texttt{false} (the germ has a degenerate Newton boundary)}\label{line:terminate}
\EndIf
\State $S_0:=$ the set of monomials of $\jet(f,d)$ that lie on the vertices of $\Gamma(f)$.\label{line:S0}
\State $S_1:=\emptyset$ \label{line:S1}
\While{{\bf true}} \label{while:big1}
\If{ $\Gamma(f)$ does not intersect the $x$-axis} \label{line:intersect1}
$f=f+x^a$, where $a=\mu(f)+2$.\label{line:intersect2}
\EndIf
\If{ $\Gamma(f)$ does not intersect the $y$-axis } \label{line:intersect1a}
$f=f+y^a$, where $a:=\mu(f)+2$.\label{line:intersect2a}
\EndIf
\State Let $\Delta_1,\Delta_2,\ldots,\Delta_v$ be the facets of $\Gamma(f)$ ordered by increasing slope.
\If{$\mu(\sat(\jet(f,\Delta_i)))<\infty\,\forall i=1,\ldots,v$ and the normalization condition is satisfied}
\Return{($f$,$\Gamma(f)$)}
\Else
\State Let $m$ be an element of $S_0$.
\State Let $\delta_1,\ldots,\delta_r$ ($r\leq 2$) be the facets of $\Gamma(f)$, ordered by increasing slope, adjacent to~$m$.
\For{$i$ \textbf{from} $1$ \textbf{to} $r$}
\State $h:=\jet(f,\delta_i)$, $w:=$ the weight defined by $h$.
\While{$\mu(\sat(h))=\infty$ or the normalization condition is not satisfied} \label{line:beginWhileFace} 
\State $f:=$ output of Algorithm \ref{alg:homclas} applied to $f$ and $w$.
\If{$f=\texttt{false}$}
\Return{\texttt{false} (the germ has a degenerate Newton boundary)}\label{line:terminate}
\EndIf
\State Let $\delta_1,\ldots,\delta_r$ 
be the facets of $\Gamma(f)$, ordered by increasing slope, adjacent to $m$.
\State $h:=\jet(f,\delta_i)$, $w:=$ the weight defined by $h$.
\EndWhile\label{line:endWhileFace}
\EndFor
\State Let $\eta_1,\ldots,\eta_r$ ($r\leq 2$)  be the facets of $\Gamma(f)$, ordered by increasing slope, containing~$m$.
\State $S_1:=S_1\cup\{m\}$ \label{line:addm}
\State $f_1:=\jet(f, \eta_1\cap \ldots \cap\eta_r))$.
\State $S_0:=$ monomials in $(S_0\cup\supp(f_1))\setminus S_1$ lying on the vertices of $\Gamma(f)$. \label{line:changeS0}
\EndIf
\EndWhile\label{while:big2}
\If{$\sat(\jet(f,\Delta_1))$ is smooth}
\For{$d\le l\le\mu+2$}\label{for:satbegin}
\If{$\Delta_1$ cuts the $x$-axis at $x^l$}
\State Write $\jet(f,\Delta_1)$ as $\jet(f,\Delta_1)=c_1x^l+c_2x^sy$
\State Apply $y\mapsto y-\frac{c_1}{c_2}x^{l-s}$, $x\mapsto x$
\State Recalculate $\Delta_1$
\EndIf
\EndFor
\EndIf
\If{$\Delta_1$ does not cut the $x$-axis at $x^l$, where $l=\mu+2$}
$f = f+x^l$
\EndIf
\If{$\sat(\jet(f,\Delta_v))$ is smooth}
\For{$d\le l\le\mu+2$}
\If{$\Delta_v$ cuts the $y$-axis at $x^l$}
\State Write $\jet(f,\Delta_v)$ as $\jet(f,\Delta_v)=c_1y^l+c_2y^sx$
\State Apply $y\mapsto x-\frac{c_1}{c_2}y^{l-s}$, $y\mapsto y$
\State Recalculate $\Delta_v$
\EndIf
\EndFor\label{for:satend}
\EndIf
\If{$\Delta_v$ does not cut the $y$-axis at $y^l$, where $l=\mu+2$}
$f = f+y^l$
\EndIf
\Return $f$

\end{algorithmic}
\end{spacing}

\end{algorithm}

\begin{remark}
Composition of all right-equivalences applied in the course of Algorithm \ref{alg:clas} yields an explicit polynomial right-equivalence from the input germ to a germ with normalized non-degenerate Newton boundary. Addition of the monomials $x^a$ and $y^a$ in lines \ref{line:intersect1} and \ref{line:intersect1a}, can also be realized by a polynomial right-equivalence (creating higher order terms). Note that, after each right-equivalence in the algorithm, we may truncate above the determinacy. These truncations amount to non-polynomial right-equivalences and thus are not explicitly known. However, a lower bound (depended on the truncation order) for the filtration of the truncation transformations can be specified.
\end{remark}

\paragraph{\textit{Proof of Algorithm \ref{alg:clas}}.}
We show that the algorithm will either transform $f$ in finitely many steps to a germ with a non-degenerate Newton boundary  which is normalized according to the conditions in Definition~\ref{def normal cond}, or, if the germ is not right-equivalent to a germ with a non-degenerate Newton boundary, will return false. We write $T$ for the type associated to this Newton boundary, which we will denote by $\Gamma(T)$.

It is sufficient to prove that if $f$ can be transformed to a germ with these properties, the algorithm finds such a transformation in finitely many steps. So assume that there is a right-equivalence $\psi$ such that $\tilde f  = \psi(f)$ satisfies the above properties. We will iteratively change $\psi$ by composing it with suitable right-equivalences until $\psi_0^{w(T)_i}=\operatorname{id}$ for all $i$.  We will do this by considering jets of $f$ with regard to a suitable sequence $w_j$ of weights. In the $i$-th iteration we find a right-equivalence $\phi$ such that the $w_j\dash\jet(\phi(f),d(f))=w_j\dash\jet(\psi(f),d(f))$ for all $j\leq i$. Note that, before the $i$-th iteration, $w_i$ will be an entry of the current $w(f)$, and any entry of $w(T)$ will occur among the $w_j$'s. We then replace $f$ by $\phi(f)$ and $\psi$ by $\psi\circ \phi^{-1}$ and iterate (denoting in every iteration, for simplicity of notation, the current germ by $f$, and the current transformation to $\tilde f$ by $\psi$). 

We start out by considering $\jet(f,d)$, where $d$ is the maximal filtration of $f$ with respect to the standard grading. Let $$\jet(f,d)=c\cdot g_1^{l_1}\cdot \ldots\cdot g_n^{l_n}$$ be the factorization of this jet as in Algorithm~\ref{alg:standhomclas}, where $l_1\ge l_2\ge\ldots\ge l_n$, $g_1,\ldots,g_{n}$ are linear homogeneous. There are the following cases:

\begin{itemize}[leftmargin=4mm]
\item If more than two of the $l_i$ are larger than $1$, then $f$ cannot be transformed to a germ with non-degenerate Newton boundary.

\item Suppose $l_1\geq l_2>1$, and the remaining $l_i=1$ then, up to permutation and scaling of the variables, the only way to obtain a non-degenerate jet is to map $g_1\mapsto x$ and $g_2\mapsto y$.

\item Suppose that $l_1=\ldots=l_n=1$. Note that this is the case of an ordinary $n$-fold point. If $n=1$ then $f$ is a smooth germ. If $n\geq 2$, the lowest jet $\jet(f,d)$ is already non-degenerate. Algorithm~\ref{alg:clas} then maps $g_i\mapsto x$ and $g_j\mapsto y$ for some choice of $i\neq j$, adds terms to create smooth facets towards the coordinate axes to satisfy our normalization condition, if necessary, and then finishes without any further iteration of the main loop. Hence our choice of $i$ and $j$, and of the normalization, will not influence subsequent iterations of the algorithm. The choice of $i$ and $j$ also does not influence the face of the Newton polygon corresponding to $\jet(f,d)$ after the transformation (and any further facets have a smooth saturation and cut one of the coordinate axis, so are unique according to our normalization condition). Note that,  after the transformation, $\jet(f,d)$ is a term if and only if $n=2$.

\item Lastly, suppose that $l_1>1$ and $l_2=\ldots=l_n=1$. To obtain a non-degenerate lowest jet, we have to map $g_1$ to a non-zero multiple of a variable. For what follows, we fix the choice  $g_1\mapsto x$. This determines one vertex of the face corresponding to $\jet(f,d)$ and makes the saturation of the face non-degenerate (but not necessarily normalized). 
If $n=1$, the algorithm maps $y\mapsto y$.
 If $n\geq2$, in order to satisfy our normalization condition, the algorithm chooses an $i\neq 1$, maps $g_i \mapsto y$ and, if necessary, adds a smooth facet towards the coordinate axis.
 
We will show that if for one of these choices of transformation $\rho_0:g_1\mapsto x, g_i\mapsto y$, the algorithm determines in subsequent steps that there is no transformation to a germ with a non-degenerate Newton boundary, then this is also the case for all other choices. 

 For this we prove in the case $n\geq 2$, that if $g$ is \emph{any} 
linear divisor of $\jet(f,d)$ with $g\neq g_1=x$, then there is a right-equivalence $\rho$ with $\rho_0(g)=y$ such that $\rho(f)$ is a germ with non-degenerate Newton boundary: 
By assumption, for $g_0:=(\psi^{-1})_0(y)$, we have $\psi_0(g_0)= y$ and $\psi(f)$ has a non-degenerate and normalized Newton boundary. Let $w(\psi(f))=(w_1,\ldots,w_s)$ be the associated piecewise weight, and let $\Delta_1,\ldots,\Delta_s$ be the corresponding facets of $\Gamma(\psi(f))$. Note that $\jet(\psi(f),\Delta_s)$ has smooth saturation since $\jet(\psi(f),d)$ has a factor $y$, but none of the other factors is divisible by $y$, that is, $\jet(\psi(f),d)$ has a monomial of the form $yx^l$, $l>1$. If $g$ is any linear divisor of $\jet(f,d)$ with $g\neq x$, and $\lambda$ is the linear transformation $\lambda=(x\mapsto x,\, g \mapsto y)\circ(x\mapsto x,\, y\mapsto g_0)$, then $\rho=\lambda\circ\psi$ maps $g$ to $y$, and $\jet(\rho(f),\Delta_i)$ and $\jet(\psi(f),\Delta_i)$ agree for $i=1,\ldots,s-2$ up to scaling of $y$. To see this, note that, since for all the weights $w_1,\ldots,w_s$ we have $w_i\dash\deg(y)\ge w_i\dash\deg(x)$, a linear transformation will only create terms of $(w_1,\ldots,w_{s-1})$-weight bigger than or equal to $d(\psi(f))$. Moreover, since we choose $g$ to be a linear factor of $\jet(f,d)=\jet(f,\Delta_{s-1})$, the support of the $d$-jet does not change under $\lambda$. The facet $\Delta_s$ with smooth saturation may still change. If this facet is eliminated, we modify $\rho$ by a right-equivalence of higher filtration to make the Newton polygon convenient again, thus satisfying the normalization condition. 
In the case $n=1$, we have to prove that if $g$ is \emph{any} linear form which is transversal to $l_1=x$ then there is a right-equivalence $\rho$ with $\rho_0(g)=y$ such that $\rho(f)$ is a germ with normalized, non-degenerate Newton boundary. For this we can use a similar argument as above with the only difference that there is no smooth facet towards the $x$-axis, and $\jet(\rho(f),\Delta_i)$ and $\jet(\psi(f),\Delta_i)$ agree for $i=1,\ldots,s$ up to scaling of $y$.

\end{itemize}

In any of the above cases (except of course the first one, where the algorithm finishes with \texttt{false}), we can now assume without loss of generality that the algorithm has chosen the linear transformation as $\psi_0^{-1}$, that is, we may assume that $\psi_0$ is the identity. In particular, we can assume that $\jet(f,d)$ has non-degenerate saturation and (if it is not just a term) satisfies the normalization condition. 

If $f$ has a normalized non-degenerate Newton boundary we are finished. If the Newton boundary is non-degenerate and all facets except the smooth facets are normalized, we can normalize  these facets and are also finished. So suppose this is not the case. Starting with the face corresponding to the lowest standard graded jet, we find $\Gamma(T)$ by inductively determining a facet of the Newton polygon adjacent to the faces already coinciding with those of $\Gamma(T)$. In every iteration, if $f$ does not have terms on both coordinate axes, we add terms as described in lines \ref{line:intersect1} and \ref{line:intersect1a} of Algorithm \ref{alg:clas} such that $\Gamma(f)$ does not have infinite faces. In the following, suppose $m$ is a vertex monomial of $\Gamma(T)$ and $\delta$ is a facet of $\Gamma(f)$ adjacent to $m$ which has not been considered so far in the sense of verifying whether $\delta$ is a facet of $\Gamma(T)$.\footnote{ For book-keeping purposes in this iteration the algorithm uses two sets of monomials $S_0$ and $S_1$. The set $S_0$ contains all monomials that correspond to vertices of $\Gamma(T)$ and have at least one adjacent facet that is not, or has not yet been verified to be a facet of $\Gamma(T)$. The set $S_1$ consists of all monomials in $S_0$ for which we have verified that for its adjacent facets $\delta_1,\ldots,\delta_r$ ($r\le 2$), $\jet(f,\operatorname{span}(\delta_1,\ldots,\delta_r))$ has non-degenerate saturation and is normalized (hence, is a facet of $\Gamma(T)$).}
As in Algorithm \ref{alg:homclas}, write $$g:=\jet(f,\delta)=cg_1^{l_1}\cdot\ldots\cdot g_n^{l_n}\widetilde{g}$$where $l_1\ge l_2\ge\ldots\ge l_n$ and $g_1,\ldots,g_n$ are (weighted) linear homogeneous and coprime, and $n$ is maximal. Note that if $n=0$, then $g$ is non-degenerate and satisfies the normalization condition.

The next step is to find a transformation $\phi$ such that $\widetilde w\dash\jet(\phi(f),k)=\widetilde w\dash\jet(\psi(f),k)$, where $\widetilde w=(\widetilde w_1,\widetilde w_2)$ is a weight associated to $\delta$ and $k=\widetilde w\dash\deg(g)$. We have the following cases:

\begin{itemize}[leftmargin=4mm]

\item If more than one of the $l_i$ is bigger than $1$, then (since $\widetilde{w}$ is not the standard weight)  $f$ cannot be transformed to a germ with a non-degenerate Newton boundary. In this case, Algorithm \ref{alg:homclas} returns $\texttt{false}$ (see line \ref{line:terminate}). 

\item If $g$ is non-degenerate and  the normalization condition is satisfied we pass on to the next facet.\end{itemize}

\begin{itemize}[leftmargin=4mm]

\item Suppose now $l_1>l_2=1$. Without loss of generality, we may assume that  $\widetilde w_1<\widetilde w_2$ and $g_1=c_1y+h$, where $c_1$ is a constant and the standard degree of $h$ is bigger than $1$. Algorithm~\ref{alg:homclas} applies the transformation $\phi:g_1\mapsto y$, $x\mapsto x$ to $f$, which maps $g$ to a polynomial with a non-degenerate saturation which, in case it is not a term, satisfies the normalization condition. If $g$ is transformed to a term $m$, a new adjacent facet of $m$ appears with a new associated weight $\widetilde w$ and we repeat the process via the while-loop (line \ref{line:beginWhileFace} to line \ref{line:endWhileFace}). On the other hand, if $g$ is not transformed to a term, the slope of $g$ is preserved, and we pass to the next facet. 

\item Finally, suppose $l_1=l_2=\ldots=l_n=1$. In this case, $g$ is non-degenerate, but does not necessarily satisfy the normalization condition. Without loss of generality we may assume that $\widetilde w_1>\widetilde w_2$. Then Algorithm~\ref{alg:homclas} maps $g_i\mapsto x$ for some choice of $i$ and $y\mapsto y$. Similar to the standard weighted case, we will show that, if for the choice $\rho_0^{\widetilde w}: g_i\mapsto x, y\mapsto y$ the algorithm determines in subsequent steps that there is no transformation to a germ with a non-degenerate Newton boundary, then this is also the case for all other choices of $i$. 
For this we prove that, if $h$ is any $\widetilde w$-weighted linear divisor of $g$, then there is a right-equivalence $\rho$ with $\rho_0^{\widetilde w}(h)=x$ such that $\rho(f)$ is a germ with a non-degenerate Newton boundary: By assumption, for $h:=(\psi^{-1})_0^{\widetilde w}(x)$, we have $\psi_0^{\widetilde w}(h)=x$ and $\psi(f)$ has a non-degenerate Newton boundary. Let $w(\psi(f))=(w_1,\ldots,w_s)$ be the associated piecewise weight, and let $\Delta_1,\ldots,\Delta_s$ be the corresponding facets of $\Gamma(\psi(f))$. Note that $\jet(\psi(f),\Delta_1)$ has a smooth saturation since $\psi(g)$ has a monomial of the form $xy^l$, $l>1$. We can now use a similar argument as in the standard weighted case to prove that $\rho$ transforms $f$ to a germ with a non-degenerate normalized Newton boundary.  

\end{itemize}

Note that a facet with a non-degenerate saturation which satisfies the normalization condition cannot be transformed  by a weighted linear transformation with regard to the weight of the facet to a term or a different facet. Hence, in Algorithm \ref{alg:clas}, the vertex points of $\jet(f,\operatorname{span}(\delta_1,\ldots ,\delta_r))$ lie on $\Gamma(T)$ after each iteration of the while-loop (line  \ref{while:big1} to line  \ref{while:big2}). Lines \ref{for:satbegin} to \ref{for:satend} ensure that the facets which have smooth saturation and cut the coordinate axis satisfy the normalization condition.

We now prove that the algorithm terminates after finitely many iterations. For that we have to show that the while-loop in lines \ref{while:big1} to \ref{while:big2} and the while loop in lines \ref{line:beginWhileFace} to \ref{line:endWhileFace}  terminates. 
After a finite number of iterations of the loop in lines \ref{line:beginWhileFace} to \ref{line:endWhileFace}, the loop either terminates or the highest $x$-power or the highest $y$-power below $\Gamma(f)$ increases.  If $x^{m_1}$ and $y^{m_2}$ are the highest powers of $x$ and $y$ below $\Gamma(f)$, then $1,x,...,x^{m_1-1},y,...,y^{m_2-1}$ represent different equivalence classes in the local algebra. Hence, since $\mu(f)$ is finite, the process must stop after finitely many iterations. In each iteration of the while-loop in lines \ref{while:big1} to \ref{while:big2} the process may either stop (in case $f$ is not equivalent to a germ with non-degenerate Newton boundary), or create at least one new non-degenerate facet of $\Gamma(f)$. Since the number of facets of a Newton polygon with a fixed minimum degree monomial is bounded, the algorithm terminates.
\begin{flushright}$\square$\end{flushright}

Note that from the proof of Algorithm \ref{alg:clas} we in particular obtain uniqueness of the Newton polygon determined by the algorithm. This follows also from Corollary \ref{prop:PuiseuxIntersectionNewtonBoundary2} using the invariance of the charateristic exponents and intersection numbers under right-equivalence.

\begin{corollary}
Let $f\in\mathbb C[x,y]$ be a germ which is right-equivalent to a germ with non-degenerate Newton boundary $\Gamma$. If $\Gamma$ is assumed to satisfy the normalization condition, then $\Gamma$ is uniquely determined by $f$ (up to permutation of the variables).
\end{corollary}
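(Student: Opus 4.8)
The plan is to deduce the statement directly from Corollary~\ref{prop:PuiseuxIntersectionNewtonBoundary2}, which already asserts that for a germ equivalent to one with non-degenerate and normalized Newton polygon $\Gamma$, the polygon $\Gamma$ is determined up to permutation of the variables by the characteristic exponents and the pairwise intersection numbers of the branches of the germ. Thus the only thing that remains is the observation that these data depend on $f$ alone and not on the chosen representative of its right-equivalence class. I would first reduce to the case $f\in\m^2$: if $f$ is a unit the statement is vacuous, and if $f$ is smooth it is immediate, so from now on assume $f\in\m^2$.

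Next I would recall that a $\mathbb C$-algebra automorphism $\phi$ of $\mathbb C\{x,y\}$ carries the germ at $0$ of the zero set $\{f=0\}$ biholomorphically onto the germ of $\{\phi(f)=0\}$; in particular $f$ and $\phi(f)$ are topologically equivalent in the sense defined before Theorem~\ref{theorem:Brieskorn}. By Theorem~\ref{theorem:Brieskorn}, the topological type of a plane curve germ is encoded precisely by a bijection between its irreducible branches which preserves the characteristic exponents and the pairwise intersection numbers. Hence any two germs in the same right-equivalence class have, up to this bijection, the same collection of characteristic exponents and intersection numbers. Applying this with $\phi$ such that $\phi(f)=\tilde f$, where $\tilde f$ is a germ with non-degenerate Newton boundary $\Gamma=\Gamma(\tilde f)$ satisfying the normalization condition, the characteristic exponents and intersection numbers of the branches of $\tilde f$ coincide with those of $f$. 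Then Corollary~\ref{prop:PuiseuxIntersectionNewtonBoundary2}, applied to $f$ (which is equivalent to $\tilde f$), shows that $\Gamma$ is determined up to permutation of $x$ and $y$ by $f$; in particular, if $\tilde f'$ is any other germ with non-degenerate Newton boundary satisfying the normalization condition in the right-equivalence class of $f$, then $\Gamma(\tilde f')$ is determined by the same invariants of $f$ and hence agrees with $\Gamma$ up to permutation of the variables. This is exactly the assertion.

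I do not expect a genuine obstacle: the substance is carried entirely by Corollary~\ref{prop:PuiseuxIntersectionNewtonBoundary2} (and, behind it, by Proposition~\ref{prop branches} and Lemma~\ref{lem:PuiseuxIntersectionNewtonBoundary}), and the argument above is only a bookkeeping step turning a statement about characteristic exponents and intersection numbers into one about $f$. The one point deserving a word of care is the behaviour of smooth facets meeting a coordinate axis, for which the characteristic exponents and intersection numbers by themselves do not fix the slope; this is precisely the reason the normalization condition is imposed, and it has already been handled inside the proof of Corollary~\ref{prop:PuiseuxIntersectionNewtonBoundary2}. It should also be noted that the ambiguity ``up to permutation of the variables'' is genuine, since the coordinate swap $x\leftrightarrow y$ is a right-equivalence that reflects the Newton polygon, and therefore cannot be removed from the statement.
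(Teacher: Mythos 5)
Your proof is correct and matches one of the two justifications the paper explicitly offers: the text states that the corollary ``follows also from Corollary~\ref{prop:PuiseuxIntersectionNewtonBoundary2} using the invariance of the characteristic exponents and intersection numbers under right-equivalence,'' which is exactly your argument. Your appeal to Theorem~\ref{theorem:Brieskorn} to justify that invariance is a valid way to make explicit the step the paper leaves implicit, and your remarks about the smooth-facet case and the genuine $x\leftrightarrow y$ ambiguity correctly identify where the normalization condition is doing its work.
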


The above algorithms are implemented in the \textsc{Singular} library \texttt{arnold.lib}. We illustrate the implementation at the following example:
\begin{example}\label{ex nb}
We start out with a germ with non-degenerate Newton boundary and apply the automorphism $\phi$. The result is a rather generic polynomial and has a degenerate Newton boundary. We then use our algorithm to transform this germ to a germ with
non-degenerate Newton boundary. Note that our implementation is based (both for input and output) on the \Singular\  data type \texttt{Poly} which (in contrast to \texttt{poly}) allows us to handle polynomials that are not in the current active ring. In particular, this allows us to handle extensions of the coefficient field in a transparent manner.
\begin{verbatim}
> LIB "Arnold.lib";
> ring R = 0,(x,y),ds;  
> Poly g = 4*x^2*y^4+x^3*y^3+y^2*x^4+5*x^10+y^13;
> Poly phix = x+y^2+x^2+x*y+x^2*y+x*y^3;
> Poly phiy = y+y^2+2*x^2+x*y+y*x^2+y^2*x+x*y^4;
> RingHom phi;
> phi.source = R;
> phi.target = R;
> phi.images = list(phix,phiy);
> Poly f = phi(g);
> newtonPolygon(f);
Polygon with
Vertices: [0,6], [6,0]
Facets:   [[0,6], [6,0]]
Normals:  [1,1]
> def (F, P) = classifyNewtonBoundary(G);
\end{verbatim}

\noindent This function call returns a germ $F$ and a Newton polygon $P$, where $F$ which is right-equivalent to $f$ and has a non-degenerate normalized Newton boundary $P$. Here, $P$ is  represented by the data type \texttt{Polygon}, which stores the vertices, one dimensional facets and their normal vectors:

\begin{small}
\begin{verbatim}
> F;
 4*x4y2+x3y3+x2y4+24*x5y2+30*x4y3+12*x3y4+6*x2y5+28*x6y2+86*x5y3+25*x4y4-4*x3y5-5*x2y6
-4*xy7-40*x7y2-104*x6y3-151*x5y4-223*x4y5-159*x3y6-78*x2y7-16*xy8-32*x9y-48*x8y2
-263*x7y3-277*x6y4+397*x5y5+396*x4y6+90*x3y7+18*x2y8+12*xy9+9*y10-128*x10y-312*x9y2
+226*x8y3+544*x7y4+2016*x6y5+3202*x5y6+554*x4y7-366*x3y8-6*x2y9+102*xy10+58*y11
+24*x11y-387*x10y2-195*x9y3+2150*x8y4-10*x7y5-2795*x6y6-5869*x5y7-8957*x4y8-3989*x3y9
-462*x2y10+544*xy11+129*y12+x13+248*x12y+1853*x11y2+2163*x10y3+3042*x9y4+10302*x8y5
-2426*x7y6-6092*x6y7+7112*x5y8+8012*x4y9+8600*x3y10+6128*x2y11+2834*xy12-368*y13
+77*x14-131*x13y-713*x12y2+833*x11y3-17096*x10y4-28769*x9y5-2038*x8y6-47695*x7y7
-14989*x6y8+33021*x5y9-5681*x4y10-6632*x3y11+7616*x2y12-164*xy13-3138*y14+193*x15
+1565*x14y+429*x13y2-7032*x12y3-98*x11y4-49796*x10y5+7939*x9y6+130879*x8y7
-59808*x7y8+48755*x6y9+39932*x5y10-99485*x4y11-13597*x3y12+28520*x2y13-21596*xy14
-5924*y15
> P;
Polygon with
vertices: [0,10], [2,4], [4,2], [13,0]
facets:   [[0,10], [2,4]], [[2,4], [4,2]], [[4,2], [13,0]]
normals:  [3,1], [1,1], [2,9]
\end{verbatim}
\end{small}
\end{example}

\section{Computing the Modality and a Regular Basis for a Germ with a non-degenerate Newton Boundary
}
\label{section:modality}

In \cite[Proposition 7.2]{K1976}, a method to determine the (inner) modality of a germ with a non-degenerate Newton boundary has been described:

\begin{theorem}
Let $f\in\mathbb C\{x,y\}$ be a convenient series with a non-degenerate principle part. Construct two half lines in $\mathbb R^2_{+}$ originating at the point $(2,2)$ parallel to the $x$-axis and $y$-axis. Let $D$ be the polygon limited by a piece of the Newton polygon and the two half lines originating at the point $(2,2)$. The modality of $f$ is the number of points of $\mathbb N^2$ inside and on the boundary of $D$.
\end{theorem}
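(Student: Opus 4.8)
The plan is to reduce this statement to Arnold's characterization of the modality via the inner modality (Definition~\ref{def:innermodality}), which by Theorem~\ref{corollary:unfolding} and the results in Section~\ref{section:NormalForm} equals the modality. Concretely, the modality is the number of monomials in a regular basis for $Q_f$ lying on or above $\Gamma(f)$. Since $f$ is convenient with non-degenerate principal part, by Remark~\ref{remark:regularBasis} we may work with the quasihomogeneous (principal) part, or more generally use that a regular basis is represented by monomials, so the count of lattice points on or above the Newton polygon that represent a basis of the relevant graded pieces of the Milnor algebra is what we need. The first step, therefore, is to identify precisely which lattice points $(i,j)\in\mathbb{N}^2$ on or above $\Gamma(f)$ contribute to a regular basis, and to show this set is exactly $D\cap\mathbb{N}^2$.

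First I would recall Kouchnirenko's description of the Milnor algebra of a convenient non-degenerate germ in two variables: a monomial basis of $Q_f$ is given by the lattice points strictly below the Newton polygon together with a controlled number of points on and above it, and the total count is the Milnor number $N(f)=2S-a-b+1$ (Theorem~\ref{theorem:milnorNumber}). The key combinatorial fact is that the partial derivatives $\partial f/\partial x$ and $\partial f/\partial y$ have Newton polygons obtained by shifting $\Gamma(f)$ by $(-1,0)$ and $(0,-1)$ respectively; hence the lead ideal of $\Jac(f)$ (with respect to a generic enough ordering, using non-degeneracy) is generated, on the region on/above $\Gamma(f)$, by monomials whose exponents lie in $((-1,0)+\Gamma_+(f))\cup((0,-1)+\Gamma_+(f))$ up to the contributions of the facet jets. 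The monomials on or above $\Gamma(f)$ that survive in $Q_f$ are then exactly those exponent vectors $(i,j)$ with $i,j\ge 0$ that lie on or above $\Gamma(f)$ but are \emph{not} of the form $(i',j')+e_k$ for $(i',j')$ on or above $\Gamma(f)$ with $k\in\{1,2\}$ — equivalently, those with $i\le 1$ or $j\le 1$, i.e. the points in the two strips $\{i\le 1\}\cup\{j\le 1\}$ that are weakly above $\Gamma(f)$. Intersecting the half-plane "on or above $\Gamma(f)$" with $\{j\le 2\}\cup\{i\le 2\}$ and comparing with the region $D$ bounded by the two half-lines from $(2,2)$ and a piece of $\Gamma(f)$, one checks that $D\cap\mathbb{N}^2$ is precisely this set of surviving monomials; the shift from $\{i\le 1,\,j\le 1\}$ to the corner point $(2,2)$ accounts exactly for the monomials $x^{i'}y^{j'}\partial f/\partial x$ and $\partial f/\partial y$ that can kill one column and one row of lattice points and for the fact that the vertex monomials themselves lie in the lead ideal.

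So the concrete steps are: (1) invoke modality $=$ inner modality $=$ number of monomials in a regular basis on or above $\Gamma(f)$ (from Section~\ref{section:NormalForm}); (2) use Remark~\ref{remark:regularBasis}/the monomial-basis description of $Q_f$ to reduce to counting lattice points; (3) determine, via the Newton polygons of $\partial f/\partial x,\partial f/\partial y$ and non-degeneracy of the facet jets, exactly which lattice points on or above $\Gamma(f)$ are \emph{not} in the lead ideal of $\Jac(f)$ — these form the region $\{(i,j): (i,j)$ weakly above $\Gamma(f),\ i\le 2$ or $j\le 2$ with appropriate boundary behavior at the corner $(2,2)\}$; (4) verify this region coincides with $D\cap\mathbb{N}^2$ where $D$ is the polygon of the statement; (5) conclude. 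The main obstacle is step (3)–(4): pinning down the boundary behavior near the point $(2,2)$, i.e. showing the count of monomials on or above $\Gamma(f)$ that remain in $Q_f$ is exactly the number of lattice points in $D$ including its boundary (not, say, off by the vertices of $\Gamma(f)$ or by the points on the two half-lines). This requires a careful case analysis of the facet jets — in particular handling facets of slope $-1$, facets meeting a coordinate axis, and the interaction of $x\,\partial f/\partial x$ and $y\,\partial f/\partial y$ on a common facet of $w$-degree $d(f)$ — essentially the same bookkeeping carried out in the proof of Theorem~\ref{theorem:normalform}; I would quote Kouchnirenko's Proposition~7.2 in \cite{K1976} for the precise lattice-point identity and only sketch why it matches the inner-modality count.
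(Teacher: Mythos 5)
The paper does not prove this theorem: it is quoted verbatim from Kouchnirenko \cite[Proposition 7.2]{K1976} (the text immediately preceding it says ``a method \dots has been described'' and the text after only summarizes the statement as Algorithm~\ref{alg:mod}). So there is no paper proof to compare against, and any blind proof is being asked to reproduce Kouchnirenko's argument.

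Your proposal has two genuine problems. First, step (3) is wrong as stated. You claim that the monomials on or above $\Gamma(f)$ that survive in $Q_f$ are ``equivalently, those with $i\le 1$ or $j\le 1$, i.e.\ the points in the two strips $\{i\le 1\}\cup\{j\le 1\}$ that are weakly above $\Gamma(f)$''. This is inconsistent with the region $D$, whose corner sits at $(2,2)$ and which lies inside $\{i\ge 2,\ j\ge 2\}$. A sanity check with $f=x^4+y^4$ ($X_9$) already refutes your description: $\Jac(f)=\langle x^3,y^3\rangle$, the unique basis monomial on $\Gamma(f)$ is $x^2y^2$ with $(i,j)=(2,2)$, which satisfies neither $i\le 1$ nor $j\le 1$, and $D$ is exactly the single lattice point $(2,2)$. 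The correct picture is roughly the opposite: the lattice points counted by $D$ are those $(i,j)$ with $i\ge 2$, $j\ge 2$ lying on or below the Newton polygon, and they correspond via the (piecewise-graded Poincar\'e-type) duality of the Milnor algebra to basis monomials on or above $\Gamma(f)$; your argument never sets up this correspondence and in fact points the inequalities the wrong way. Second, the final sentence makes the argument circular: you write that you ``would quote Kouchnirenko's Proposition~7.2 in \cite{K1976} for the precise lattice-point identity,'' but Proposition~7.2 of \cite{K1976} \emph{is} the statement you are asked to prove. Step (1) of your plan (modality $=$ inner modality $=$ number of regular-basis monomials on or above $\Gamma(f)$, by Theorems~\ref{corollary:unfolding} and the surrounding results in Section~\ref{section:NormalForm}) is fine, but steps (3)--(5) as written do not establish the lattice-point identity; they either mis-state it or defer it to the very source being proved.
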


We summarize Kouchnirenko's approach in Algorithm \ref{alg:mod} in form of an elementary implementation.\footnote{Note that for large examples we can improve the performance using algorithms for computing lattice points of convex hulls. For this purpose, we rely on the \Singular\ library~ \texttt{gfanlib.so}. }

 \begin{algorithm}[ht]
\caption{Modality of a germ with a non-degenerate Newton boundary}%
\label{alg:mod}
\begin{spacing}{1.05}
\begin{algorithmic}[1]

\Require{A polynomial germ $f\in\m^2$ with a (convenient) non-degenerate Newton boundary.}

\Ensure{The modality of $f$.
}
\State Let $h$ be the maximal standard degree of a monomial with exponent vector on $\Gamma(f)$.
\State $m:=0$.
\For{$2\le i\le h$}
\For{$2\le j\le h$}
\If{$w\dash\deg(x^iy^j)\le d(f)$ }
\State $m:=m +1$.
\EndIf
\EndFor
\EndFor
\Return $m$
\end{algorithmic}
\end{spacing}

\end{algorithm}

Algorithm \ref{alg:mod} in combination with Algorithm \ref{alg:clas} allows us to compute the modality of any germ that can be transformed to a germ with a non-degenerate Newton boundary.  This approach is implemented in the 
command \texttt{modality} of the \textsc{Singular} library \texttt{arnold.lib}.

\begin{example}\label{ex mod}
Applying the \textsc{Singular} function to the polynomial $F$ of Example \ref{ex nb} we obtain:

\begin{small}
\begin{verbatim}
> modality(F);
6
\end{verbatim}
\end{small}
\end{example}

In order to compute a normal form for a given germ with non-degenerate Newton boundary via Theorem \ref{theorem:normalform}, we have to determine a regular basis for the local algebra of $f_0$ as defined in the theorem. In Algorithm~\ref{alg:regularbasis} we describe a method to do this (based on the ideal membership problem, which can be decided via Gr\"obner bases). Correctness and termination is obvious.

The computation of a regular basis of a germ with a non-degenerate Newton boundary is implemented in the function \texttt{regularBasis} of the library \texttt{arnold.lib}. 

\begin{example}
Continuing Example \ref{ex nb}, we determine a regular basis for $F$ by computing a regular basis for $f_0$. Note that this is a particular choice of a regular basis of $F$. We also determine a the basis elements on and above the Newton boundary. Note that the size of the latter, by Theorem~\ref{theorem:normalform}, agrees with the modality as computed in Example \ref{ex mod}.

\begin{small}
\begin{verbatim}
> vertexRegularBasis(F);
[ x3y5, x3y4, x4y3, x2y4, y10, x3y3, x12, y9, x11, x2y3, x3y2, y8, x10, y7, x9, x2y2, x8, 
xy3, y6, x3y, x7, xy2, y5,  x2y, x6, y4, x5, xy, x4, y3, x3, y2, x2, y, x]
\end{verbatim}
\vspace{0.1mm}
\begin{verbatim}
> vertexRegularBasisOnAndAboveNewtonPolygon(F);
[ x3y5, x3y4, x4y3, x2y4, y10, x3y3 ]
\end{verbatim}
\end{small}
\end{example}

 \begin{algorithm}[ht]
\caption{Monomial regular basis of the local algebra of a germ with a non-degenerate Newton boundary}%
\label{alg:regularbasis}
\begin{spacing}{1.05}
\begin{algorithmic}[1]

\Require{A polynomial germ $f\in\m^2$ with a (convenient) non-degenerate Newton boundary, and an integer $b$.}

\Ensure{A monomial regular basis for the local algebra of $f$ in $w(f)$-degree $\geq b$. Note that, for $b=0$ the algorithm computes a regular basis, and for $b=d(f)$ a regular basis above and on the Newton boundary.
}
\State Let $h$ be the highest $w$-weight of a monomial lying on a determinacy bound for $f$, where $w$ is the piecewise weight associated to $\Gamma(f)$.
\State$J:=\Jac(f)$
\State$B:=\emptyset$
\For{$d$ \textbf{from} $h$ \textbf{to} $b$ \textbf{by} $-1$}
\State  Let $L$ be the set of all monomials of piecewise degree $d$.
\For{$x^i y^j$ \textbf{in} $L$}
\If{$x^i y^j\not\in J$}
\State $J:=J+\left\langle x^i y^j\right\rangle$
\State $B:=B\cup\{x^i y^j\}$
\EndIf
\EndFor
\EndFor
\Return $B$
\end{algorithmic}
\end{spacing}

\end{algorithm}

An algorithm to determine a normal form which in addition explicitly finds the exceptional hypersurface in the parameter space will be stated in the next section. We also discuss how to find the admissible values of the moduli parameters of the normal form family.

\section{Computing a Normal Form  for a germ with a non-degenerate Newton Boundary.}
\label{section:normalform}
We first describe an algorithm to compute a normal form for a given input polynomial that can be transformed to a germ with a non-degenerate Newton boundary: We use Algorithm~\ref{alg:clas} to determine a polynomial with non-degenerate Newton boundary in the same right-equivalence class, we use Algorithm  \ref{alg:regularbasis} to find a regular basis for $f_0$ on and above the Newton polygon, and then apply Theorem~\ref{theorem:normalform}. 
\begin{remark}\label{remark:homogeneous}
If $f\in\mathbb C[x,y]$ be a convenient weighted homogeneous polynomial with $w=w(f)=(a,b)$, then $f\in\mathbb C[x^b,y^a]$. Furthermore, defining $\phi:\mathbb C[x^b,y^a]\to\mathbb C[t_1,t_2]$, by $x^b\mapsto t_1$ and $y^a\mapsto t_2$, the polynomial $\phi(f)$ is standard homogeneous.
\end{remark}

\begin{proof}
Since $f$ is convenient, it contains monomials $x^{b'}$ and $y^{a'}$. Since $w(f)$ is in lowest terms, $b\cdot\gcd(a,b)=b'$ and $a\cdot\gcd(a,b)=a'$. Suppose now $f$ has a mixed monomial $x^cy^d$, $c,d>1$. Then, since $f$ is weighted homogeneous of weight $ab'=a'b=\gcd(a',b')ab$, it follows that $ca+db=\gcd(a',b')ab$. Therefore $b|c$ and $a|d$, implying that $f\in\mathbb C[x^b,y^a]$. Furthermore, since $w(x^a)=w(y^b)$, it follows that $\phi(f)$ is standard homogeneous.
\end{proof}

Algorithm \ref{alg:normalform} summarizes this approach. In the algorithm we also effectively determine the valid parameter values in the  family.
 
 \begin{proof}
 Termination of the algorithm is clear. Correctness follows directly from the proof of Theorem~\ref{theorem:normalform}  and Remark \ref{remark:homogeneous}. Note that the discriminant can be computed in terms of a resultant.
 \end{proof}
 
 \begin{example}
 By applying Algorithm \ref{alg:normalform} to the polynomial $f$ of Example \ref{ex nb}, we determine a normal form for the $\mu$-constant stratum of $f$.\footnote{All relevant data computed in the course of the algorithm is collected in the \textsc{Singular} data type \texttt{NormalForm} and can be accessed through the respective keys. For more detail, please refer to the documentation of the library.}
 \begin{verbatim}
> NormalForm G = normalForm(f);
> G;
corank = 2
Milnor number = 36
modality = 6
Newton polygon:
vertices: [0,10], [2,4], [4,2], [13,0]
facets:   [[0,10], [2,4]], [[2,4], [4,2]], [[4,2], [13,0]]
normals:  [3,1], [1,1], [2,9]
parameter monomials = [ x3y5, x3y4, x4y3, x2y4, y10, x3y3 ]
normal form =  x4y2 + x2y4 + y10 + x13 
+ a(3,3)x3y3 +  a(2,4)x2y4 + a(4,3)x4y3 + a(3,4)x3y4 + a(3,5)x3y5  + a(0,10)y10 
exceptional hypersurface = (1+a(2,4))*(1+a(0,10))
\end{verbatim}
 \end{example}
 
 \begin{remark}
 Note that, after finding the normal form, the values of the moduli parameters can, in theory, be computed via an ansatz for the right equivalence (making use of finite determinacy). This is, however, not practicable except for very small examples. An efficient algorithm for this problem will be discussed in a follow-up paper.
\end{remark}

 \begin{algorithm}[h]
\caption{Normal form of a germ which is equivalent to a germ with a non-degenerate Newton boundary}%
\label{alg:normalform}
\begin{spacing}{1.05}
\begin{algorithmic}[1]

\Require{A polynomial germ $f\in\mathbb Q[x,y]$, $f\in\m^3$}  of corank $2$ with finite Milnor number.

\Ensure{A normal form for $f$, if $f$ is equivalent to a germ with non-degenerate Newton boundary, and \texttt{false} otherwise. We return a polynomial $F\in \mathbb{Q}[\alpha_1,\ldots,\alpha_m][x,y]$ representing the normal form, and a principal ideal $J\subset\mathbb C[\alpha_1,\ldots,\alpha_m]$ such that any germ $F(\alpha)(x,y)$ with $\alpha \not\in V(J)$ is in the $\mu$-constant stratum of $f$, and for any germ $g\in \mathbb{C}[x,y]$ in the $\mu$-constant stratum of $f$, there is an $\alpha \not\in V(J)$ with $g$ right equivalent to $F(\alpha)(x,y)$, and  there exist only finitely many such $\alpha$.
}
\State Replace $f$ by the output of Algorithm \ref{alg:clas} applied to $f$. \label{line:3}
\State Let $f_0$ be the sum of monomials corresponding to the vertices of $\Gamma(f)$.
\State Let $B$ be a regular basis for $f_0$  in $w(f)$-degree $\geq d(f)$, as given by Algorithm \ref{alg:regularbasis}.
\State $F:=f_0+\sum_{i=1}^{m}\alpha_i \cdot b_i$
\State $J:=\left\langle 1\right\rangle$
\For{$1\le i\le m$}
\If{$\alpha_i$ is the coefficient of a monomial of $F$ corresponding to a vertex of $\Gamma(f)$}
\State $J=J\cap \left\langle\alpha_i\right\rangle$.
\EndIf
\EndFor
\State Let $\Delta_1,\ldots,\Delta_n$ be the facets of $\Gamma(f)$.
\For{$1\le j\le n$}
\State Let $(a,b)$ be the normal vector of $\Delta_j$ in lowest terms.
\State Define $\Phi:\mathbb{Q}[x^b,y^a]\rightarrow \mathbb{Q}[t], \hspace{2mm}x^b\mapsto 1, y^a\mapsto t$.
\State $g:=\Phi(\sat(\jet(F,\Delta_j)))$
\State $J:=J\cap \left\langle\operatorname{disc}_{t}g\right\rangle$, where $\operatorname{disc}_t$ denotes the discriminant with respect to the variable $t$.
\EndFor
\Return $F,J$
\end{algorithmic}
\end{spacing}

\end{algorithm}

\end{document}